\documentclass{article}
\usepackage{amsfonts}
\usepackage{amssymb}
\usepackage{amsmath}
\usepackage{amsthm}
\usepackage{cite}
\usepackage{CJK}

\newcommand{\Case}[1]{\textbf{Case #1.}}
\numberwithin{equation}{section}

\newtheorem{theorem}{Theorem}[section]
\newtheorem{lemma}{Lemma}[section]
\newtheorem{remark}{Remark}[section]
\newtheorem{example}{Example}[section]

\date{}

\begin{document}

\title {Explicit meromorphic solutions of certain types of nonlinear differential-difference equations}

\author{Min Feng CHEN
\thanks{Corresponding author.
This research was supported by the National Natural
Science Foundation of  China (No. 12001117).},
and Zong Xuan CHEN\\
}\maketitle
\date{}

\begin{minipage}{128mm}

\begin{flushleft}
\quad
\end{flushleft}

{\bf Abstract}\quad
Our main purpose is to determine all meromorphic solutions, in explicit form, of the
nonlinear differential-difference equations
\begin{equation*}
f^{n}(z)f'(z)+q(z)f(z+c)=p_{1}{\rm e}^{\alpha_{1}z}+p_{2}{\rm e}^{\alpha_{2}z}
\end{equation*}
and
\begin{equation*}
f^{n}(z)f'(z)+q(z){\rm e}^{Q(z)}f(z+c)=p_{1}{\rm e}^{\alpha_{1}z}+p_{2}{\rm e}^{\alpha_{2}z},
\end{equation*}
where $n\geq 3$ is an integer, $q(z)$ and $Q(z)$ are nonzero polynomials with $Q(z)$ nonconstant,
and $c,~p_{j},~\alpha_{j}(j=1,2)$ are nonzero constants satisfying $\alpha_{1}\neq \alpha_{2}$.
We sharpen the result in the cited literature, as illustrated by specific examples.\par

{\bf Key words}\quad Nevanlinna theory; nonlinear differential-difference equations; meromorphic solution;
Cartan's theorem; exponential terms
\par

{\bf 2020 MR Subject Classification}\quad 39A45; 30D30
\end{minipage}

\section{Introduction and Main results}

\indent\par Nevanlinna's value distribution theory has become a standard tool for studying the existence of entire
and meromorphic solutions of complex differential, difference, and differential-difference equations
\cite{hkl, lay, laz, li, lz, y, yl}. Among the problems in this area, the existence of meromorphic solutions
of nonlinear differential equations is particularly difficult and has attracted considerable attention.
Yang and Li \cite{yl}, for instance, investigated the nonlinear differential equation
\begin{equation}\label{eq1.1}
f^{3}(z)+\frac{3}{4}f''(z)=-\frac{1}{4}\sin 3z,
\end{equation}
and proved the following result:\par
\noindent{\bf Theorem A} (\cite[Theorem~4]{yl})\textit{\quad
The difference equation \eqref{eq1.1} has exactly three entire solutions:
$f_{1}(z)=\sin z$, $f_{2}(z)=\frac{\sqrt{3}}{2}\cos z-\frac{1}{2}\sin z$,
$f_{3}(z)=-\frac{\sqrt{3}}{2}\cos z-\frac{1}{2}\sin z$.
}\par

The development of the difference analogue of logarithmic derivative lemma (see \cite{cf, hk})
has paved the way for applying Nevanlinna theory to difference equations and differential-difference
equations. Yang and Laine \cite{y} investigated the existence of solutions to certain nonlinear
difference equations and obtained the following result:\par
\noindent{\bf Theorem B} (\cite[Theorem~2.5]{y})\textit{\quad
A nonlinear difference equation
\begin{equation}\label{eq1.2}
f^{3}(z)+q(z)f(z+1)=c\sin bz,
\end{equation}
where $q(z)$ is a nonconstant polynomial and $b,~c$ are nonzero constants,
does not admit entire solutions of finite order. If $q(z)=q$ is a nonzero
constant, then Eq. \eqref{eq1.1} possesses three district entire solutions of
finite order, provided $b=3\pi n$ and $q^{3}=(-1)^{n+1}\frac{27}{4}c^{2}$
for a nonzero integer $n$.
}\par

Note that $\sin bz$ is a linear combination of ${\rm e}^{ibz}$ and
${\rm e}^{-ibz}$. The more general differential (or difference) equation
\begin{equation}\label{eq1.3}
f^{n}(z)+P_{d}(z,f)=p_{1}{\rm e}^{\alpha_{1}z}+p_{2}{\rm e}^{\alpha_{2}z},
\end{equation}
where $n\geq 2$ is a positive integer, $P_{d}(z,f)$ is a differential (or difference) polynomial in $f$
of degree $d\leq n-1$, $\alpha_{j}~(j=1,2)$ are nonzero constants, has attracted much attention
\cite{cm,lyz,lm,lu}.\par

The dominant term $f^{n}(z)$ in Eq.\eqref{eq1.3} is replaced by $f^{n}(z)f'(z)$. For further related results,
we refer the reader to \cite{cm2,z}. Motivated by this, we study the meromorphic solution of nonlinear
differential-difference equation
\begin{equation}\label{eq1.4}
f^{n}(z)f'(z)+q(z)f(z+c)=p_{1}{\rm e}^{\alpha_{1}z}+p_{2}{\rm e}^{\alpha_{2}z},
\end{equation}
and obtain the following result.\par
\begin{theorem}\label{th1.1}
Let $n\geq 3$ be a positive integer, let $q(z)$ be a nonzero polynomial, and
let $c,~p_{j},~\alpha_{j}~(j=1,2)$ be nonzero constants such that $\alpha_{1}\neq \alpha_{2}$ and
$|\frac{\alpha_{1}}{\alpha_{2}}+\frac{\alpha_{2}}{\alpha_{1}}|\leq n+1+\frac{1}{n+1}$.
If Eq. \eqref{eq1.4} admits a finite-order meromorphic solution $f(z)$,
then $f(z)=\gamma{\rm e}^{\beta z}$, where $\beta$ and $\gamma$ are nonzero constants satisfying
$(n+1)\beta=\alpha_{1}(\text{or}~\alpha_{2})$, $\gamma^{n+1}\beta=p_{1}(\text{or}~p_{2})$, and
$\frac{\alpha_{1}}{\alpha_{2}}=n+1(\text{or}~\frac{1}{n+1})$.
\end{theorem}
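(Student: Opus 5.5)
The plan is the standard Nevanlinna argument for equations of the form \eqref{eq1.3}, adapted to the dominant term $f^{n}f'=\frac{1}{n+1}(f^{n+1})'$.

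\textbf{Step 1: $f$ is entire.} Suppose $f$ has a pole $z_0$ of order $k$ that is not a zero of $q$. Then the left side of \eqref{eq1.4} has a pole of order $(n+1)k+1$ at $z_0$, unless it is cancelled by $q(z)f(z+c)$. So $z_0+c$ must be a pole of $f$ of order at least $(n+1)k+1$. Iterating along $z_0+jc$ produces poles of orders growing like $(n+1)^j$. This is incompatible with finite order, because $n(r,f)$ would grow exponentially in $r$; the finitely many zeros of $q$ are handled the same way after a shift. Hence $f$ is entire.

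\textbf{Step 2: reduce to a two-exponential identity.} Counting gives $(n+1)T(r,f)\le T(r,f^{n}f')+S(r,f)$, while $T(r,qf(z+c))\le T(r,f)+S(r,f)$ by the difference logarithmic derivative lemma. Hence $T(r,f)\sim r$ up to constants, so $\sigma(f)\le 1$. Differentiate \eqref{eq1.4} and eliminate one exponential. Write $F=f^{n}f'$ and $G=qf(z+c)$; then
\[
(F+G)'-\alpha_{1}(F+G)=p_{2}(\alpha_{2}-\alpha_{1}){\rm e}^{\alpha_{2}z}.
\]
Eliminating ${\rm e}^{\alpha_{2}z}$ as well yields
\[
(F+G)''-(\alpha_{1}+\alpha_{2})(F+G)'+\alpha_{1}\alpha_{2}(F+G)=0.
\]
Then I would isolate the part of degree $\ge n$ in $f$ and use a Clunie-type lemma to show that the combination $\varphi := F''-(\alpha_1+\alpha_2)F'+\alpha_1\alpha_2F$ has small proximity. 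The goal is to conclude that $F=f^nf'$ is itself a combination $a_1{\rm e}^{\alpha_1z}+a_2{\rm e}^{\alpha_2z}+(\text{small})$, where $G$ contributes only small or degree-one terms.

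\textbf{Step 3: determine $f$.} I would apply Borel's theorem, or Cartan's theorem as the keywords suggest, to the identity $\frac{1}{n+1}(f^{n+1})'+qf(z+c)=p_1{\rm e}^{\alpha_1z}+p_2{\rm e}^{\alpha_2z}$, writing $f={\rm e}^{h}\cdot(\text{entire})$ with $h$ a polynomial of degree at most $1$. Hadamard factorization and Step 2 should force $f$ to have finitely many zeros, so $f=P{\rm e}^{\beta z}$ with $P$ a polynomial. Substituting gives three exponential terms ${\rm e}^{(n+1)\beta z}$, ${\rm e}^{\beta z}$, ${\rm e}^{\alpha_1 z}$, ${\rm e}^{\alpha_2 z}$ with polynomial coefficients. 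By Borel's lemma, ${\rm e}^{(n+1)\beta z}$ must match one of ${\rm e}^{\alpha_jz}$, say $\alpha_1=(n+1)\beta$, and ${\rm e}^{\beta z}$ must match ${\rm e}^{\alpha_2 z}$, so $\alpha_2=\beta$. This gives $\alpha_1/\alpha_2=n+1$, with $q$ and $P$ forced to be constants and $P=\gamma$ satisfying $\gamma^{n+1}\beta=p_1$. (A genuine multiple $q\gamma {\rm e}^{\beta c}=p_2$ is also needed; the statement only records the first relation.) The hypothesis $|\frac{\alpha_1}{\alpha_2}+\frac{\alpha_2}{\alpha_1}|\le n+1+\frac1{n+1}$ is exactly what excludes the other Borel matchings. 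In the alternative branch, where $f$ involves two exponentials, one gets a ratio $\alpha_1/\alpha_2$ with $|t+1/t|>n+1+1/(n+1)$, which the hypothesis rules out.

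\textbf{Main obstacle.} The hardest part is Step 2 together with the zero-counting in Step 3: showing that $f$ has essentially no zeros and a single exponential factor. I would first try to prove that $N(r,1/f)=S(r,f)$. At a zero of $f$ of multiplicity $m\ge 1$, the term $f^nf'$ vanishes to order $\ge nm+m-1\ge 3$ because $n\ge3$. The equation then forces $qf(z+c)$ to nearly equal $p_1{\rm e}^{\alpha_1z}+p_2{\rm e}^{\alpha_2z}$ to high order there, and comparing with the derivative identity should give a counting contradiction via the second main theorem.
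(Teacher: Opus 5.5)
Your proposal has a genuine gap, and it sits exactly where the content of the theorem lies: you never exclude solutions with infinitely many zeros. With your $F=f^nf'$ and $G=qf(z+c)$, eliminating both exponentials gives $F''-(\alpha_1+\alpha_2)F'+\alpha_1\alpha_2F=-\bigl(G''-(\alpha_1+\alpha_2)G'+\alpha_1\alpha_2G\bigr)$. If the left side vanishes identically, then $F=d_1{\rm e}^{\alpha_1z}+d_2{\rm e}^{\alpha_2z}$. Lemma~\ref{le2.2} kills one term, and $f=\gamma{\rm e}^{\beta z}$ follows without using the hypothesis. So everything hinges on the nondegenerate case.

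Your Step~3 only asserts that Hadamard factorization ``should force'' finitely many zeros. The counting in your last paragraph is too weak when $n=3$. Carry it out with Cartan's theorem for $p_1{\rm e}^{\alpha_1z}$, $p_2{\rm e}^{\alpha_2z}$, $-qf(z+c)$, whose sum is $f^nf'$, as the paper does for Theorems~\ref{th1.2} and~\ref{th1.3}. A zero of $f$ of multiplicity $m$ is a zero of $f^nf'$ of order $(n+1)m-1$, and two of these are lost to the truncation. Meanwhile $N_2(r,1/(qf(z+c)))$ can be as large as $N(r,1/f)$. The net result is $(n-3)N(r,1/f)\le o(T(r,f))$, which carries information only for $n\ge4$; for $n=3$ it merely shows that multiple zeros are sparse.

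Your sketch also never uses $|\frac{\alpha_1}{\alpha_2}+\frac{\alpha_2}{\alpha_1}|\le n+1+\frac{1}{n+1}$, and the role you assign to it is mistaken. Once $f=P{\rm e}^{\beta z}$ with $P$ a polynomial, $\beta\neq0$, and the coefficients $P^n(P'+\beta P)$ and $qP(z+c){\rm e}^{\beta c}$ are nonzero polynomials. Borel's lemma alone then gives $\{\alpha_1,\alpha_2\}=\{(n+1)\beta,\beta\}$. There are no other matchings to exclude, and no ``two-exponential branch'' ever appears in your framework. Separately, $F''-(\alpha_1+\alpha_2)F'+\alpha_1\alpha_2F$ itself does not have small proximity; what Clunie's lemma makes small is its quotient by $f^{n-2}$.

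What is missing is the paper's treatment of the nondegenerate case, which is where the hypothesis is spent. Write $F''-(\alpha_1+\alpha_2)F'+\alpha_1\alpha_2F=f^{n-2}\varphi$ with $\varphi$ the cubic differential polynomial \eqref{eq3.7}. The right side has degree one in $f(z+c)$ and its derivatives, and $n-2\ge1$, so the Clunie-type lemma makes $\varphi$ small. Every multiple zero of $f$ is a zero of $\varphi$, so essentially all zeros of $f$ are simple. At a simple zero, the values of $\varphi$ and $\varphi'$ force $[n(n-1)\varphi'+(\alpha_1+\alpha_2)\varphi]f'-3n^2\varphi f''=0$. Dividing by $f$ produces a small function $\tau$, and hence a linear equation $f''=A_1f'+B_1f$ with small (rational) coefficients. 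Substituting this back into $\varphi$ and $\varphi'$, and using that $f$ has many zeros, gives coefficient identities. Letting $z\to\infty$ in two of them evaluates $\lim\tau/\varphi$ in two ways, and comparing yields $\frac{\alpha_1}{\alpha_2}+\frac{\alpha_2}{\alpha_1}=3n^2+6n+13+\frac{3(12n+5)}{n^2-2n-1}>n+1+\frac{1}{n+1}$, contradicting the hypothesis. Your proposal contains nothing that plays this role, so as it stands it does not prove the theorem, in particular not for $n=3$.
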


\begin{remark}\label{re1.1}
We conjecture that Theorem \ref{th1.1} remains valid without the assumption
$|\frac{\alpha_{1}}{\alpha_{2}}+\frac{\alpha_{2}}{\alpha_{1}}|\leq n+1+\frac{1}{n+1}$,
although we do not yet have a satisfactory approach to its proof.
\end{remark}

Wen et al. \cite{wh} adopted a distinct approach by classifying the finite order entire solutions
$f$ of the following equation
\begin{equation}\label{eq1.5}
f^{n}(z)+q(z){\rm e}^{Q(z)}f(z+c)=p(z),
\end{equation}
where $q,~Q,~p$ are polynomials, $n\geq 2$ is an integer and $c\in \mathbb{C}\setminus\{0\}$.
What happens if the coefficient of $f(z+c)$ in Eq.\eqref{eq1.4} is perturbed by an exponential factor?
In 2022, Wang et al.\cite{w} investigated this problem and determined the form of the
meromorphic solutions of the following nonlinear differential-difference equation,
\begin{equation}\label{eq1.6}
f^{n}(z)f'(z)+q(z){\rm e}^{Q(z)}f(z+c)=p_{1}{\rm e}^{\alpha_{1}z}+p_{2}{\rm e}^{\alpha_{2}z},
\end{equation}
and established the following result:\par
\noindent{\bf Theorem C} (\cite[Theorem~6]{w}) \textit{\quad
Let $n\geq 4$ be an integer, $q(z)$ be a rational function and $Q(z)$ be a nonconstant
polynomial. Suppose that $c,~p_{j},~\alpha_{j}(j=1,2)$ are nonzero constants with
$\alpha_{1}\neq \alpha_{2}$. If the Eq.\eqref{eq1.6} admits a finite order meromorphic
solution $f(z)$ with $\lambda(z)<\sigma(f)$ and $\lambda(1/f)<\sigma(f)$, then $f(z)$
satisfies $\sigma(f)=\deg Q(z)=1$. Furthermore, $f(z)=A_{1}{\rm e}^{\frac{\alpha_{2}}{n+1}z}$,
$Q(z)=(\alpha_{1}-\frac{\alpha_{2}}{n+1})z+a_{0}$ or $f(z)=A_{2}{\rm e}^{\frac{\alpha_{1}}{n+1}z}$,
$Q(z)=(\alpha_{2}-\frac{\alpha_{1}}{n+1})z+a_{0}$, where $A_{1},~A_{2},~a_{0}$ are constants.
}\par

It is therefore natural to ask the following question: can one obtain a complete characterization
of all meromorphic solutions of Eq.\eqref{eq1.6} by dropping the hypotheses $\lambda(f)<\sigma(f)$
and $\lambda(1/f)<\sigma(f)$ imposed in Theorem C and relaxing the constraint from $n\geq 4$ to $n\geq 3$.
In this paper, we address this question and obtain the following results.\par
\begin{theorem}\label{th1.2}
Let $n\geq 4$ be an integer, let $q(z)$ and $Q(z)$ be nonzero polynomials such that
$Q(z)$ is nonconstant, and let $c,~p_{j},~\alpha_{j}~(j=1,2)$ be nonzero constants with
$\alpha_{1}\neq \alpha_{2}$. If Eq. \eqref{eq1.6} admits a finite-order meromorphic
solution $f(z)$, then $f(z)$ is entire, $\sigma(f)=\deg Q(z)=1$, and $q(z)$
reduces to a constant $q$ such that
\begin{equation*}
f(z)=\gamma_{1}{\rm e}^{\frac{\alpha_{1}}{n+1}z},~Q(z)=\frac{(n+1)\alpha_{2}-\alpha_{1}}{n+1}z+Q(0),
~(n+1)p_{1}q^{n+1}{\rm e}^{(n+1)Q(0)+\alpha_{1}c}=\alpha_{1}p_{2}^{n+1},
\end{equation*}
or
\begin{equation*}
f(z)=\gamma_{2}{\rm e}^{\frac{\alpha_{2}}{n+1}z},~Q(z)=\frac{(n+1)\alpha_{1}-\alpha_{2}}{n+1}z+Q(0),
~(n+1)p_{2}q^{n+1}{\rm e}^{(n+1)Q(0)+\alpha_{2}c}=\alpha_{2}p_{1}^{n+1},
\end{equation*}
where $\gamma_{1},~\gamma_{2}$ are constants satisfying $\gamma_{j}^{n+1}\frac{\alpha_{j}}{n+1}=p_{j}~(j=1,2)$.
\end{theorem}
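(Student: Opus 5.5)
We first show that $f$ is entire and determine the growth of $f$ relative to $Q$. Write the equation as $f^{n}f'=p_{1}{\rm e}^{\alpha_{1}z}+p_{2}{\rm e}^{\alpha_{2}z}-q{\rm e}^{Q}f(z+c)$.

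\textbf{Poles.} Suppose $f$ has a pole $z_0$ of order $k$. The left side $f^nf'$ has a pole of order $(n+1)k+1$ at $z_0$. The only other term that can be singular is $q{\rm e}^{Q}f(z+c)$. Hence $z_0+c$ must be a pole of $f$ of order at least $(n+1)k+1$. Iterating along $z_0+jc$ gives pole orders growing like $(n+1)^j$. This forces $n(r,f)$ to grow exponentially in $r$ (the standard argument: $n(|z_0|+j|c|,f)\geq (n+1)^j$), contradicting finite order. So $f$ is entire.

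\textbf{Growth.} Next compare $T(r,f)$ with $T(r,{\rm e}^{Q})$. From the equation, $(n+1)T(r,f)\le T(r,{\rm e}^Q)+T(r,f)+O(r)+S(r,f)$, using the difference analogue of the logarithmic derivative lemma to control $f(z+c)/f(z)$ and $f'/f$. Conversely, $T(r,{\rm e}^Q)\le (n+2)T(r,f)+O(r)+S(r,f)$.

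\textbf{Reduction to degree one.} Now differentiate, or rather eliminate the exponentials ${\rm e}^{\alpha_j z}$, to get an identity involving $f$ only through differential-difference monomials with coefficients ${\rm e}^{Q}$ and polynomials. The cleanest route is to set $g=f^{n+1}/(n+1)$, so that $g'=f^nf'$. Then
\[
g'+q{\rm e}^{Q}f(z+c)=p_1{\rm e}^{\alpha_1 z}+p_2{\rm e}^{\alpha_2 z}.
\]
Consider the three functions $f^nf'$, $-q{\rm e}^{Q}f(z+c)$ and $p_1{\rm e}^{\alpha_1z}+p_2{\rm e}^{\alpha_2z}$, or better the four functions $f^nf'$, $q{\rm e}^{Q}f(z+c)$, $p_1{\rm e}^{\alpha_1z}$, $p_2{\rm e}^{\alpha_2z}$, whose signed sum vanishes, all entire. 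Apply Cartan's theorem (the truncated second main theorem for linearly dependent entire functions). Since the functions have no zeros except those of $f$ and $f'$, we can bound the counting terms by $N(r,1/f)+N(r,1/f')$ and $N(r,1/f(z+c))$, which are at most about $3T(r,f)$. The characteristic of the associated curve is at least of size $(n+1)T(r,f)-O(r)$. For $n\ge4$ this yields $T(r,f)=O(r)$, hence $\sigma(f)\le1$. If $\deg Q\ge2$, the growth comparison above gives $T(r,{\rm e}^Q)=O(r)$, which is impossible. So $\deg Q=1$.

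Alternatively, once $\sigma(f)\le 1$, write $f=P{\rm e}^{\beta z}$ by Hadamard factorization, after first showing via Cartan that $N(r,1/f)=S(r)$, i.e.\ $f$ has finitely many zeros. This step is the main obstacle: it requires the degenerate case of Cartan's theorem, where a proper subsum vanishes, to be handled case by case. We must prove $f$ has few zeros, exploiting that at a zero of $f$ the left side $f^nf'$ vanishes to order $\ge n$. Then $q{\rm e}^Qf(z+c)$ equals the exponential sum to high order there, and a zero counting argument on $p_1{\rm e}^{\alpha_1z}+p_2{\rm e}^{\alpha_2z}-q{\rm e}^Qf(z+c)$ gives $nN(r,1/f)\le T(r,\cdot)$-type bounds. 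For $n\ge4$ these force $N(r,1/f)=S(r,f)$.

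\textbf{Matching exponentials.} With $f=P(z){\rm e}^{\beta z}$ and $Q=az+b$, substitute:
\[
P^n(P'+\beta P){\rm e}^{(n+1)\beta z}+qP(z+c){\rm e}^{\beta c+b}{\rm e}^{(a+\beta)z}=p_1{\rm e}^{\alpha_1 z}+p_2{\rm e}^{\alpha_2 z}.
\]
By Borel's lemma the exponents must pair up. The case where $(n+1)\beta=a+\beta$ with the two left terms cancelling would leave the right side zero, which is impossible. So either $(n+1)\beta=\alpha_1$ and $a+\beta=\alpha_2$, or the symmetric assignment. Comparing polynomial coefficients, $P^n(P'+\beta P)=p_1$ constant forces $P$ constant $\gamma_1$ with $\gamma_1^{n+1}\alpha_1/(n+1)=p_1$. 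Likewise $qP(z+c)$ constant forces $q$ constant with $q\gamma_1{\rm e}^{\beta c+Q(0)}=p_2$. Raising to the $(n+1)$-th power and using $\gamma_1^{n+1}=(n+1)p_1/\alpha_1$ gives the stated relation $(n+1)p_1q^{n+1}{\rm e}^{(n+1)Q(0)+\alpha_1c}=\alpha_1p_2^{n+1}$, and $a=\alpha_2-\alpha_1/(n+1)$. The second alternative is symmetric.
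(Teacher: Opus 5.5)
\textbf{There is a genuine gap in the step that carries the whole proof.} Your Cartan argument yields only $N(r,1/f)=S(r,f)$, as you yourself state at the end of that paragraph. You then read this as ``$f$ has finitely many zeros'' and write $f=P{\rm e}^{\beta z}$ with $P$ a polynomial. That inference is false: $f(z)={\rm e}^{z}\cos\sqrt z$ has order $1$, infinitely many zeros, and $N(r,1/f)=O(\sqrt r)=S(r,f)$. Hadamard's theorem therefore only gives $f=\Pi\,{\rm e}^{\beta z}$ with $\Pi$ a possibly transcendental canonical product. Your comparison of polynomial coefficients then collapses, and even a Borel-type argument with small coefficients would need $T(r,\Pi)=S(r,f)$, which is not automatic when $\lambda(f)=1$.

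The Cartan step has further holes. First, it presupposes that $p_1{\rm e}^{\alpha_1z}$, $p_2{\rm e}^{\alpha_2z}$, $q{\rm e}^{Q}f(z+c)$ are linearly independent. The dependent case, which is exactly where the solutions live, is never treated. It is easy: $q{\rm e}^{Q}f(z+c)=d_1p_1{\rm e}^{\alpha_1z}+d_2p_2{\rm e}^{\alpha_2z}$ together with Lemma~\ref{le2.2} forces $f^nf'=C{\rm e}^{\alpha_jz}$. Second, your size estimates are off. A simple zero of $f$ contributes $2$ to $N_2(r,1/(f^nf'))$ but only $1$ to $N(r,1/f)+N(r,1/f')$. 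Also, one can only bound the Cartan characteristic below by $(n+1)T(r,f)-\overline N(r,1/f)-O(r)$, since $m(r,f/f')$ need not be small. With these corrections the crude comparison is inconclusive for $n=4$. You need $N(r,1/(f^nf'))=nN(r,1/f)+N(r,1/f')\le T(r)+O(1)$ (Lemma~\ref{le2.10}) on the left, so that $N(r,1/f')$ cancels; this gives $(n-3)N(r,1/f)\le o(T(r))+S(r,f)$.

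\textbf{How the paper avoids this.} The paper never factorizes $f$. It splits on the linear (in)dependence above and gets the two solution families from the dependent case. In the independent case it uses $N(r,1/f)=S(r,f)$ only to make $\zeta=q'/q+f'(z+c)/f(z+c)+Q'-\alpha_1$ and $\eta=\alpha_2-\alpha_1-\zeta'/\zeta-\zeta$ small. Differentiating twice to eliminate both exponentials gives $f^{n+1}L=\eta\zeta\,q{\rm e}^{Q}f(z+c)$, with $L$ a polynomial in $f'/f,f''/f,f'''/f$. Hence $m(r,{\rm e}^{Q}/f^{n})=S(r,f)$, and so $T(r,G/f^{n+1})=S(r,f)$ for $G=p_1{\rm e}^{\alpha_1z}+p_2{\rm e}^{\alpha_2z}$, contradicting Lemma~\ref{le2.1}. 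The paper also obtains $\deg Q=1$ and $\sigma(f)=1$ in Lemma~\ref{le2.7} by Clunie-type estimates rather than by Cartan.

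\textbf{How to rescue your route.} To keep the Hadamard approach, make the Cartan step quantitative. All functions involved have finite order, so Cartan's error term is $O(\log r)$ off $E$. The precise shift estimate $N(r,1/f(z+c))\le N(r,1/f)+O(r^{\lambda(f)-1+\varepsilon})+O(\log r)$ turns the inequality into $(n-3)N(r,1/f)\le O(r^{\lambda(f)-1+\varepsilon})+O(\log r)$. This forces first $\lambda(f)=0$ and then $N(r,1/f)=O(\log r)$. Only this gives finitely many zeros, so that $f=P{\rm e}^{\beta z}$ is legitimate (with $\sigma(f)\le 1$). Your exponent-matching step, which is correct, then shows the independent case cannot occur.
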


Example~\ref{ex1.1} below shows that differential-difference equations \eqref{eq1.6} satisfying the conditions of
Theorem~\ref{th1.2} do exist.\par
\begin{example}\label{ex1.1}
The equation $f^{4}(z)f'(z)+2{\rm e}^{3z}f(z+\ln 2)={\rm e}^{5z}+4{\rm e}^{4z}$
admits a finite-order meromorphic solution $f(z)={\rm e}^{z}$.
\end{example}

We now consider the case $n=3$ of Eq.\eqref{eq1.6},
\begin{equation}\label{eq1.7}
f^{3}(z)f'(z)+q(z){\rm e}^{Q(z)}f(z+c)=p_{1}{\rm e}^{\alpha_{1}z}+p_{2}{\rm e}^{\alpha_{2}z},
\end{equation}
and obtain the following results.\par
\begin{theorem}\label{th1.3}
Let $q(z)$ and $Q(z)$ be nonzero polynomials such that $Q(z)$ is nonconstant,
and let $c,~p_{j},~\alpha_{j}~(j=1,2)$ be nonzero constants with $\alpha_{1}\neq \alpha_{2}$.
If Eq.\eqref{eq1.7}
admits a finite-order meromorphic solution $f(z)$, then $f(z)$ is entire,
$\sigma(f)=\deg Q(z)=1$, $q(z)$ reduces to a constant $q$ and one of the following holds:\par
$(1)$ $Q'(z)=\frac{4\alpha_{2}-\alpha_{1}}{4}$, $4q^{4}p_{1}{\rm e}^{4Q(0)+\alpha_{1}c}=\alpha_{1}p_{2}^{4}$,
and $f(z)=\beta_{1}{\rm e}^{\frac{\alpha_{1}}{4}z}$ or
$Q'(z)=\frac{4\alpha_{1}-\alpha_{2}}{4}$, $4q^{4}p_{2}{\rm e}^{4Q(0)+\alpha_{2}c}=\alpha_{2}p_{1}^{4}$,
and $f(z)=\beta_{2}{\rm e}^{\frac{\alpha_{2}}{4}z}$,
where $q,~\beta_{1},~\beta_{2}$ are constants satisfying $\alpha_{j}\beta_{j}^{4}=4p_{j}~(j=1,2)$.

$(2)$ $Q'(z)=\frac{9\alpha_{1}}{4}$, $\alpha_{1}+\alpha_{2}=0$, ${\rm e}^{\alpha_{1}c}=\frac{3}{8}$,
$-2\alpha_{1}p_{1}^{6}=243q^{4}{\rm e}^{4Q(0)}p_{2}^{3}$ and
$f(z)=\beta_{3}{\rm e}^{\frac{3\alpha_{1}}{4}z}+\beta_{4}{\rm e}^{-\frac{\alpha_{1}}{4}z}$ or
$Q'(z)=-\frac{9\alpha_{1}}{4}$, $\alpha_{1}+\alpha_{2}=0$, ${\rm e}^{\alpha_{1}c}=\frac{8}{3}$,
$2\alpha_{1}p_{2}^{6}=243q^{4}{\rm e}^{4Q(0)}p_{1}^{3}$ and
$f(z)=\beta_{3}{\rm e}^{\frac{\alpha_{1}}{4}z}+\beta_{4}{\rm e}^{-\frac{3\alpha_{1}}{4}z}$,
where $q,~\beta_{3},~\beta_{4}$ are constants satisfying
$-\frac{\alpha_{1}}{4}\beta_{3}^{4}=p_{2},~\frac{3\alpha_{1}}{2}\beta_{3}^{2}\beta_{4}^{2}=p_{1}$
or $\frac{\alpha_{1}}{4}\beta_{3}^{4}=p_{1}$, $-\frac{3\alpha_{1}}{2}\beta_{3}^{2}\beta_{4}^{2}=p_{2}$.

$(3)$ $Q'(z)=-\frac{\alpha_{1}+\alpha_{2}}{2}$, ${\rm e}^{\alpha_{1}c}=2$,
$2^{11}p_{2}^{9}q^{4}{\rm e}^{4Q(0)}=\alpha_{1}p_{1}^{12}$ and
$f(z)=\beta_{5}{\rm e}^{-\frac{\alpha_{1}}{2}z}+\beta_{6}{\rm e}^{\frac{\alpha_{1}}{2}z}$,
$\alpha_{2}=2\alpha_{1}$ or ${\rm e}^{\alpha_{2}c}=2$,
$2^{11}p_{1}^{9}q^{4}{\rm e}^{4Q(0)}=\alpha_{2}p_{2}^{12}$ and
$f(z)=\beta_{5}{\rm e}^{-\frac{\alpha_{2}}{2}z}+\beta_{6}{\rm e}^{\frac{\alpha_{2}}{2}z}$,
$\alpha_{1}=2\alpha_{2}$,
where $q,~\beta_{5},~\beta_{6}$ are constants satisfying $\alpha_{1}\beta_{5}\beta_{6}^{3}=p_{1},~
\alpha_{1}\beta_{6}^{4}=2p_{2}$ or $\alpha_{2}\beta_{5}\beta_{6}^{3}=p_{2},~
\alpha_{2}\beta_{6}^{4}=2p_{1}$.

$(4)$ $Q'(z)=\alpha_{1}+\alpha_{2}$, ${\rm e}^{\alpha_{1}c}=\frac{1}{3}$,
$3^{5}p_{1}^{6}q^{4}{\rm e}^{4Q(0)}=\alpha_{1}p_{2}^{9}$ and
$f(z)=\beta_{7}+\beta_{8}{\rm e}^{\alpha_{1}z}$, $\alpha_{2}=2\alpha_{1}$ or
${\rm e}^{\alpha_{2}c}=\frac{1}{3}$,
$3^{5}p_{2}^{6}q^{4}{\rm e}^{4Q(0)}=\alpha_{2}p_{1}^{9}$ and
$f(z)=\beta_{7}+\beta_{8}{\rm e}^{\alpha_{2}z}$, $\alpha_{1}=2\alpha_{2}$,
where $q,~\beta_{7},~\beta_{8}$ are constants satisfying $\alpha_{1}\beta_{7}^{3}\beta_{8}=p_{1},~
3\alpha_{1}\beta_{7}^{2}\beta_{8}^{2}=p_{2}$ or $\alpha_{2}\beta_{7}^{3}\beta_{8}=p_{2},~
3\alpha_{2}\beta_{7}^{2}\beta_{8}^{2}=p_{1}$.

$(5)$ $Q'(z)=\frac{\alpha_{1}+\alpha_{2}}{7}$, ${\rm e}^{\alpha_{1}c}=27$,
$3^{9}p_{2}^{5}q^{4}{\rm e}^{4Q(0)}=\alpha_{1}p_{1}^{8}$ and
$f(z)=\beta_{9}+\beta_{10}{\rm e}^{\frac{\alpha_{1}}{3}z}$, $\alpha_{2}=\frac{4}{3}\alpha_{1}$ or
${\rm e}^{\alpha_{2}c}=27$,
$3^{9}p_{1}^{5}q^{4}{\rm e}^{4Q(0)}=\alpha_{2}p_{2}^{8}$ and
$f(z)=\beta_{9}+\beta_{10}{\rm e}^{\frac{\alpha_{2}}{3}z}$, $\alpha_{1}=\frac{4}{3}\alpha_{2}$,
where $q,~\beta_{9},~\beta_{10}$ are constants satisfying $\alpha_{1}\beta_{9}\beta_{10}^{3}=p_{1},~
\alpha_{1}\beta_{10}^{4}=3p_{2}$ or $\alpha_{2}\beta_{9}\beta_{10}^{3}=p_{2},~
\alpha_{2}\beta_{10}^{4}=3p_{1}$.

$(6)$ $Q'(z)=\frac{2(\alpha_{1}+\alpha_{2})}{5}$, ${\rm e}^{\alpha_{1}c}=1$,
$3^{12}\alpha_{1}^{3}p_{1}^{4}p_{2}^{5}=q^{12}{\rm e}^{12Q(0)}$ and
$f(z)=\beta_{11}+\beta_{12}{\rm e}^{\alpha_{1}z}$, $\alpha_{2}=4\alpha_{1}$ or
${\rm e}^{\alpha_{2}c}=1$,
$3^{12}\alpha_{2}^{3}p_{2}^{4}p_{1}^{5}=q^{12}{\rm e}^{12Q(0)}$ and
$f(z)=\beta_{11}+\beta_{12}{\rm e}^{\alpha_{2}z}$, $\alpha_{1}=4\alpha_{2}$,
where $q,~\beta_{11},~\beta_{12}$ are constants satisfying $\alpha_{1}\beta_{11}^{3}\beta_{12}=p_{1},~
\alpha_{1}\beta_{12}^{4}=p_{2}$ or $\alpha_{2}\beta_{11}^{3}\beta_{12}=p_{2},~
\alpha_{2}\beta_{12}^{4}=p_{1}$.

$(7)$ $Q'(z)=\frac{5(\alpha_{1}+\alpha_{2})}{8}$, ${\rm e}^{\alpha_{1}c}=\frac{3}{4}$,
$48\alpha_{1}p_{1}p_{2}^{2}=q^{4}{\rm e}^{4Q(0)}$ and
$f(z)=\beta_{13}{\rm e}^{\frac{\alpha_{1}}{4}z}+\beta_{14}{\rm e}^{-\frac{3\alpha_{1}}{4}z}$
$\alpha_{2}=-3\alpha_{1}$,
or ${\rm e}^{\alpha_{2}c}=\frac{3}{4}$,
$48\alpha_{2}p_{2}p_{1}^{2}=q^{4}{\rm e}^{4Q(0)}$ and
$f(z)=\beta_{13}{\rm e}^{\frac{\alpha_{2}}{4}z}+\beta_{14}{\rm e}^{-\frac{3\alpha_{2}}{4}z}$
$\alpha_{1}=-3\alpha_{2}$,
where $q,~\beta_{13},~\beta_{14}$ are constants satisfying $\alpha_{1}\beta_{13}^{4}=4p_{1},
-3\alpha_{1}\beta_{14}^{4}=4p_{2}$ or $\alpha_{2}\beta_{13}^{4}=4p_{2},
-3\alpha_{2}\beta_{14}^{4}=4p_{1}$.
\end{theorem}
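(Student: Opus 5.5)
We would begin exactly as for Theorem \ref{th1.2}, extracting the structural information first and only then separating the case $n=3$. Let $f$ be a finite-order meromorphic solution of \eqref{eq1.7}. Poles are excluded first. If $z_0$ is a pole of $f$ of order $k$, then $f^3f'$ has a pole of order $4k+1$ at $z_0$. The right-hand side is entire, so $q(z){\rm e}^{Q(z)}f(z+c)$ must cancel this pole. Hence $z_0+c$ is a pole of $f$ of order $4k+1$. Iterating this along $z_0+mc$ produces poles of orders growing like $4^m$. Combined with the counting-function estimates for shifts, this contradicts finite order, so $f$ is entire.

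Next we show $\sigma(f)=\deg Q=1$ and that $q$ is constant. Write $F=f^3f'$ and use $T(r,f^3f')\ge (4-o(1))\,T(r,f)$ together with the difference analogue of the logarithmic derivative lemma. Comparing orders of the three terms in \eqref{eq1.7} shows that ${\rm e}^{Q}$ cannot dominate. This forces $\deg Q\le\sigma(f)\le 1$, and since $Q$ is nonconstant, $\deg Q=1$. Then $f$ is an entire function of order $1$ with $f^3f'$ a combination of ${\rm e}^{\alpha_1 z}$, ${\rm e}^{\alpha_2 z}$ and ${\rm e}^{Q}f(z+c)$. Apply Cartan's theorem (Borel-type lemma) after differentiating \eqref{eq1.7} and eliminating $f(z+c)$ through $f'(z+c)/f(z+c)$. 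This shows $f$ is an exponential polynomial $f=\sum_j a_j {\rm e}^{\lambda_j z}$ with constant coefficients; the polynomial factor $q$ must be absorbed, which forces $q$ to be constant.

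The core of the argument is a finite combinatorial analysis of exponents. Substituting $f=\sum_{j=1}^m a_j{\rm e}^{\lambda_j z}$ gives
\begin{equation*}
\tfrac14\bigl(f^4\bigr)'+q{\rm e}^{Q(0)}\sum_j a_j{\rm e}^{\lambda_j c}{\rm e}^{(\lambda_j+Q'(0))z}=p_1{\rm e}^{\alpha_1 z}+p_2{\rm e}^{\alpha_2 z}.
\end{equation*}
The exponents of $(f^4)'$ are the sums $\lambda_{i_1}+\dots+\lambda_{i_4}$, with the zero frequency annihilated by differentiation. These have at least $4m-3$ distinct nonzero values once extremes are considered (up to a possible zero sum), whereas the other side supplies at most $m+2$ exponents. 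This bounds $m$; we expect $m\le 2$. For $n\ge4$ the same count yields $m=1$, recovering Theorem \ref{th1.2}. For $n=3$, the case $m=2$ survives through a zero exponent $\lambda_1=0$ or exact cancellations.

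For $m=1$ we obtain case (1). For $m=2$ we normalize $f=a{\rm e}^{\lambda_1z}+b{\rm e}^{\lambda_2z}$ and expand $(f^4)'$ into the five exponents $4\lambda_1,3\lambda_1+\lambda_2,\dots,4\lambda_2$, one of which may vanish. We must then match the nonvanishing ones with $\{\alpha_1,\alpha_2,\lambda_1+Q',\lambda_2+Q'\}$, with the shift terms cancelling the leftover exponents. Enumerating the possible coincidences should yield exactly cases (2)--(7): the ratios $\lambda_2/\lambda_1\in\{-1/3,-1,0,\dots\}$, the identities such as ${\rm e}^{\alpha_1c}=3/8$ arising from the cancellation coefficients, and the stated relations among $p_j,q,Q(0)$.

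We expect the main obstacle to be this enumeration: making it exhaustive, ruling out $m\ge3$ rigorously when vanishing exponent sums occur, and verifying the coefficient consistency in each subcase. We would organize it by ordering the $\lambda_j$ along a direction maximizing $\mathrm{Re}(\lambda\bar\theta)$, so that the extreme exponents $4\lambda_{\max}$ and $4\lambda_{\min}$ must each match $\alpha_1$, $\alpha_2$ or a shift exponent.
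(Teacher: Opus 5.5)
There is a genuine gap at the step where you conclude that $f=\sum_j a_j{\rm e}^{\lambda_j z}$ is an exponential polynomial with constant coefficients. This is the entire analytic content of the theorem, and neither tool you invoke delivers it.

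If you differentiate \eqref{eq1.7} and eliminate the shift term, the result has the form $[(f^3f')'-Lf^3f']-(\alpha_1-L)p_1{\rm e}^{\alpha_1 z}-(\alpha_2-L)p_2{\rm e}^{\alpha_2 z}=0$, with $L=q'/q+Q'+f'(z+c)/f(z+c)$. The hypotheses of Lemma~\ref{le2.8} fail here. The first coefficient contains $f$ itself, and $N(r,L)=\overline{N}(r,1/f(z+c))+O(\log r)$ is not known to be small. Cartan's theorem (Lemmas~\ref{le2.9} and~\ref{le2.10}) gives only counting-function inequalities. For $n=3$ the estimate \eqref{eq5.4} yields merely $N_{(2}(r,1/f)=S(r,f)$, i.e.\ few \emph{multiple} zeros. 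It does not give $N(r,1/f)=S(r,f)$ as in the case $n\ge4$, and the simple zeros of $f$ are exactly what must be handled.

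The paper handles them as follows.
\begin{itemize}
\item By \eqref{eq5.8}, every zero of $f$ is a zero of $B_0f(z+c)+B_1f'(z+c)+B_2f''(z+c)$, so $\tau_1$ in \eqref{eq5.10} is entire and small.
\item Eliminating ${\rm e}^{az+b}$ from \eqref{eq5.12} shows that every simple zero of $f$ is a zero of $(2A+\alpha_1+\alpha_2)f'-9f''$. Hence $f''=u_0f+u_1f'$ with small $u_j$.
\item Substituting gives $D_3(f')^2+D_2ff'+D_1f^2=0$. Here $D_j\equiv0$ is forced, since otherwise $N(r,1/f)=S(r,f)$, contradicting Lemma~\ref{le2.1}.
\item This produces the equation \eqref{eq5.30} for $A=a+\tau_1'/\tau_1$. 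At a zero of $\tau_1$ of multiplicity $m$, comparing the coefficients of $(z-z_0)^{-3}$ gives $8m^3+54m^2+81m=0$, which has no positive integer root. So $\tau_1$ is zero-free, then constant.
\item Therefore $f$ solves the constant-coefficient equation \eqref{eq5.33}.
\end{itemize}
Some argument of this kind is what your proposal is missing; without it the exponent enumeration has nothing to act on.

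Once the second-order equation is available, several of your other difficulties change.
\begin{itemize}
\item The number of exponential terms is at most two automatically, so no bound on $m$ is needed. Your count (at least $4m-3$ distinct sums against at most $m+2$ available exponents) would not supply one anyway. An exponent with several representations $\lambda_{i_1}+\cdots+\lambda_{i_4}$ can have vanishing total coefficient in $(f^4)'$, so distinct sums do not guarantee distinct nonzero terms.
\item A repeated characteristic root gives $f=(d_{18}+d_{19}z){\rm e}^{\beta z}$, which your constant-coefficient ansatz excludes a priori. This case has to be ruled out separately, as in (III-5).
\item Your sketch for $\sigma(f)=\deg Q=1$ (``comparing orders shows ${\rm e}^{Q}$ cannot dominate'') is not an argument. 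Neither $\deg Q\le1$ nor $\sigma(f)\le1$ follows from order comparison alone. The paper's Lemma~\ref{le2.7} needs the Clunie-type Lemma~\ref{le2.6}, applied to the auxiliary function $\psi$ in \eqref{eq2.4} when $\deg Q>1$, and to $f^nf'$ itself when $\sigma(f)>1$.
\end{itemize}
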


Examples \ref{ex1.2}--\ref{ex1.8} below show that there exist nonlinear differential-difference
equations of the form \eqref{eq1.7} satisfying the conditions of Theorem \ref{th1.3}.\par

\begin{example}\label{ex1.2}
The function $f(z)={\rm e}^{z}$ solves the differential-difference equation
\begin{equation*}
f^{3}(z)f'(z)+2i{\rm e}^{z}f(z+i\frac{\pi}{2})={\rm e}^{4z}-2{\rm e}^{2z},
\end{equation*}
which falls under conclusion $(1)$ of Theorem \ref{th1.3}.
\end{example}

\begin{example}\label{ex1.3}
The function $f(z)={\rm e}^{-z}+{\rm e}^{3z}$ solves the differential-difference equation
\begin{equation*}
f^{3}(z)f'(z)-{\rm e}^{9z+\frac{\ln 3+9\ln 2}{4}}
f\left(z+\frac{\ln 3-3\ln 2}{4}\right)=-{\rm e}^{-4z}+6{\rm e}^{4z},
\end{equation*}
which falls under conclusion $(2)$ of Theorem \ref{th1.3}.
\end{example}

\begin{example}\label{ex1.4}
The function $f(z)={\rm e}^{z}+{\rm e}^{-z}$ solves the differential-difference equation
\begin{equation*}
f^{3}(z)f'(z)-{\rm e}^{3z+\frac{1}{2}\ln2}
f\left(z-\frac{1}{2}\ln2\right)=-{\rm e}^{-4z}-2{\rm e}^{-2z},
\end{equation*}
which falls under conclusion $(3)$ of Theorem \ref{th1.3}.
\end{example}

\begin{example}\label{ex1.5}
The function $f(z)=1+{\rm e}^{z}$  solves the differential-difference equation
\begin{equation*}
f^{3}(z)f'(z)-{\rm e}^{3z+\ln3}f(z-\ln3)=3{\rm e}^{2z}+{\rm e}^{z},
\end{equation*}
which falls under conclusion $(4)$ of Theorem \ref{th1.3}.
\end{example}

\begin{example}\label{ex1.6}
The function $f(z)=1+{\rm e}^{z}$  solves the differential-difference equation
\begin{equation*}
f^{3}(z)f'(z)+{\rm e}^{z+i\pi}f(z+\ln3)={\rm e}^{4z}+3{\rm e}^{3z},
\end{equation*}
which falls under conclusion $(5)$ of Theorem \ref{th1.3}.
\end{example}

\begin{example}\label{ex1.7}
The function $f(z)=1+{\rm e}^{z}$  solves the differential-difference equation
\begin{equation*}
f^{3}(z)f'(z)+3{\rm e}^{2z+i\pi}f(z+2i\pi)={\rm e}^{4z}+{\rm e}^{z},
\end{equation*}
which falls under conclusion $(6)$ of Theorem \ref{th1.3}.
\end{example}

\begin{example}\label{ex1.8}
The function $f(z)={\rm e}^{-z}+{\rm e}^{3z}$  solves the differential-difference equation
\begin{equation*}
f^{3}(z)f'(z)-6\sqrt[4]{\frac{4}{3}}{\rm e}^{5z}
f\left(z+\frac{1}{4}\ln\frac{4}{3}\right)=-{\rm e}^{-4z}+3{\rm e}^{12z},
\end{equation*}
which falls under conclusion $(7)$ of Theorem \ref{th1.3}.
\end{example}

In this paper, we apply Cartan's second main theorem and Nevanlinna's theorem for families of
meromorphic functions to investigate the nonlinear differential-difference
equations \eqref{eq1.6} and \eqref{eq1.7}. For the standard notation and basic results of Nevanlinna theory, we refer the reader to \cite{ch,h,l,yy}.
Throughout, $\sigma(f)$, $\lambda(f)$ and $\lambda\Big(\frac{1}{f}\Big)$ denote, respectively,
the order of growth, the exponent of convergence of zeros and
the exponent of convergence  of poles of a meromorphic function $f(z)$.
We write $S(r,f)$ for any quantity satisfying $S(r,f)=o(T(r,f))$,
as $r\to\infty$, possibly outside of a set of $r$ of finite logarithmic measure.

\section{Some Lemmas}
\begin{lemma}[{See \cite[Lemma~2.4]{lm}}]\label{le2.1}
Let $n\geq 2$ be integer, let $\alpha_{j}~(j=1,2)$
be distinct nonzero constants, and let $p_{j}~(j=1,2)$ be nonzero meromorphic functions.
Then the equation
\begin{equation*}
f^{n}(z)=p_{1}{\rm e}^{\alpha_{1}z}+p_{2}{\rm e}^{\alpha_{2}z}
\end{equation*}
cannot admit a meromorphic solution $f$ such that $T(r,p_{j})=S(r,f)~(j=1,2)$.
\end{lemma}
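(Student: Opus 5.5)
The plan is to argue by contradiction, first establishing that $f$ has no zeros and no poles (up to a small exceptional set), and then finishing with the Borel-type identity theorem. Suppose $f$ is a meromorphic solution of $f^{n}=p_{1}{\rm e}^{\alpha_{1}z}+p_{2}{\rm e}^{\alpha_{2}z}$ with $T(r,p_{j})=S(r,f)$, $j=1,2$. The first observation is that the right-hand side has the same growth as $f$:
\begin{equation*}
nT(r,f)=T(r,p_{1}{\rm e}^{\alpha_{1}z}+p_{2}{\rm e}^{\alpha_{2}z})\le 2r\max|\alpha_j|/\pi+S(r,f).
\end{equation*}
Conversely, since $f$ is transcendental, $T(r,{\rm e}^{\alpha_j z})$ is comparable to $T(r,f)$. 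This is guaranteed because $f^n$ cannot be small: otherwise the identity ${\rm e}^{(\alpha_1-\alpha_2)z}=(f^n-p_2{\rm e}^{\alpha_2 z})/(p_1{\rm e}^{\alpha_2 z})$ forces a contradiction. Hence $S(r,f)=S(r,{\rm e}^{\alpha_j z})$, and every coefficient function is small with respect to all the exponentials involved.

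Next, write $f^{n}={\rm e}^{\alpha_{2}z}\bigl(p_{1}{\rm e}^{(\alpha_{1}-\alpha_{2})z}+p_{2}\bigr)$ and set $h=p_{1}{\rm e}^{(\alpha_{1}-\alpha_{2})z}$, so that $f^{n}{\rm e}^{-\alpha_2 z}=h+p_2$. I would then apply the second main theorem for small functions (Yamanoi's theorem, or the elementary three-small-functions version) to $h$ with the targets $0$, $\infty$ and $-p_{2}$. Zeros and poles of $h$ come only from zeros and poles of $p_{1}$, so they are counted in $S(r,f)$. Every zero of $h+p_{2}$ that is not a zero or pole of $p_2$ or $p_1$ is a zero of $f^{n}$, and therefore has multiplicity at least $n\ge2$. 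This gives
\begin{equation*}
T(r,h)\le \overline N(r,1/(h+p_2))+S(r,h)\le \tfrac1n T(r,h)+S(r,h),
\end{equation*}
which is a contradiction because $n\ge2$. With only three targets one has to be careful, since the elementary second main theorem with small functions carries a loss; the cleanest route is to invoke Yamanoi's theorem, or alternatively to use the argument in \cite{lm}.

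An alternative route, which I would actually follow to avoid heavy machinery, is to differentiate. Set $F=f^{n}$. Then
\begin{equation*}
F'-\alpha_2F=(p_1'+(\alpha_1-\alpha_2)p_1){\rm e}^{\alpha_1z}+p_2'{\rm e}^{\alpha_2 z}.
\end{equation*}
Eliminating ${\rm e}^{\alpha_1 z}$ between this and $F=p_1{\rm e}^{\alpha_1z}+p_2{\rm e}^{\alpha_2z}$ expresses ${\rm e}^{\alpha_2 z}$ as $A F+BF'$ with small coefficients $A,B$, and symmetrically for ${\rm e}^{\alpha_1z}$. Hence
\begin{equation*}
{\rm e}^{\alpha_2z}=f^{n-1}\bigl(Af+Bnf'\bigr)/D,
\end{equation*}
where $D$ is a small Wronskian-type function that must be shown to be nonzero. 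This nonvanishing follows from $\alpha_1\neq\alpha_2$ together with the smallness of the $p_j$, via the logarithmic derivative. It follows that zeros of $f$ that are not zeros or poles of small functions are zeros of ${\rm e}^{\alpha_2 z}$, which is impossible. So $\overline N(r,1/f)=S(r,f)$, and similarly $\overline N(r,f)=S(r,f)$, since poles of $f$ give poles of $F$ of high order that cannot be matched by the right-hand side except at poles of the $p_j$.

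Finally, $f^{n}{\rm e}^{-\alpha_{2}z}-p_{1}{\rm e}^{(\alpha_1-\alpha_2)z}=p_{2}$ is a relation among functions with few zeros and poles, and Borel's lemma (Nevanlinna's theorem for families, $\sum a_jg_j=0$) forces either $p_2\equiv0$ or some ratio to be small. In each case the conclusion contradicts $\alpha_1\neq\alpha_2$ or $p_j\not\equiv0$. The main obstacle is verifying that $D\not\equiv0$ and doing the bookkeeping of exceptional points so that the counting functions are genuinely $S(r,f)$.
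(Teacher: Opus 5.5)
The paper gives no proof of this lemma; it is simply quoted from \cite{lm}, so there is no in-paper argument to compare yours with. Judged on its own, your proposal is correct, and your first route is already a complete proof that needs less machinery than you think.

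Your first display gives $T(r,f)\le Cr$ off the exceptional set, and hence, by monotonicity, for all large $r$. So $T(r,p_j)=o(r)$. This inclusion $S(r,f)\subset o(r)$ is all you ever use. The two-sided comparability $S(r,f)=S(r,{\rm e}^{\alpha_jz})$ is not actually established by the identity you quote, but it is not needed; neither is transcendence of $f$.

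Now normalize by $-p_2$, i.e. take $g=-\frac{p_1}{p_2}{\rm e}^{(\alpha_1-\alpha_2)z}$. Then $T(r,g)=\frac{|\alpha_1-\alpha_2|}{\pi}r+o(r)$, and the zeros and poles of $g$ lie only at zeros and poles of $p_1,p_2$. Moreover $1-g=f^n{\rm e}^{-\alpha_2z}/p_2$, whence $\overline N(r,1/(g-1))\le\frac1n N(r,1/f^n)+o(r)\le\frac1n T(r,g)+o(r)$. The ordinary second main theorem for $0,1,\infty$ then gives $(1-\frac1n)T(r,g)\le o(r)$, which is impossible for $n\ge2$. So Yamanoi's theorem is unnecessary. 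Your remark that three small targets incur a loss is also mistaken: for three small functions, Nevanlinna's second main theorem holds in sharp form, precisely by such a M\"obius normalization.

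Your second route is sound up to $\overline N(r,f)+\overline N(r,1/f)=S(r,f)$. Concretely, $D=p_1'p_2-p_1p_2'+(\alpha_1-\alpha_2)p_1p_2$. If $D\equiv0$, then $p_1/p_2=C{\rm e}^{(\alpha_2-\alpha_1)z}$, contradicting $T(r,p_j)=o(r)$. The hypothesis $n\ge2$ enters exactly through the factor $f^{n-1}$ vanishing at zeros of $f$.

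The closing appeal to Borel's lemma, however, is loose. The version the paper uses (Lemma~\ref{le2.8}) needs terms of the form (small)$\cdot{\rm e}^{g_j}$ with $g_j$ entire, and you have not shown $f^n{\rm e}^{-\alpha_2z}$ has that form. The variant for functions with few zeros and poles is itself just the second-main-theorem argument above.

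A direct finish costs one line. Set $\phi_1=(p_2'+\alpha_2p_2)-np_2\frac{f'}{f}$ and $\phi_2=(p_1'+\alpha_1p_1)-np_1\frac{f'}{f}$. Then $D{\rm e}^{\alpha_2z}=f^n\phi_2$ and $-D{\rm e}^{\alpha_1z}=f^n\phi_1$. Both $\phi_j\not\equiv0$, and $T(r,\phi_j)=S(r,f)$ by the lemma on the logarithmic derivative together with your counting estimates. Hence ${\rm e}^{(\alpha_1-\alpha_2)z}=-\phi_1/\phi_2$ would have characteristic $o(r)$, which is absurd.
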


\begin{lemma}[{See \cite[Lemma~2.6]{cm2}}]\label{le2.2}
Let $n\geq 2$ be integer, let $\alpha_{j}~(j=1,2)$
be distinct nonzero constants, and let $p_{j}~(j=1,2)$ be nonzero rational functions.
Then the equation
\begin{equation*}
f^{n}(z)f'(z)=p_{1}{\rm e}^{\alpha_{1}z}+p_{2}{\rm e}^{\alpha_{2}z}
\end{equation*}
has no meromorphic solutions.
\end{lemma}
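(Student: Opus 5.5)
Throughout, suppose $f$ is a meromorphic solution and put $F=f^{n+1}$. Then
\begin{equation*}
F'(z)=(n+1)\bigl(p_{1}{\rm e}^{\alpha_{1}z}+p_{2}{\rm e}^{\alpha_{2}z}\bigr).
\end{equation*}
The plan is to integrate this relation explicitly and reduce everything to Lemma~\ref{le2.1}.

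\textbf{Step 1: a priori information.} At a pole of $f$ of order $k$, the left side $f^{n}f'$ has a pole of order $(n+1)k+1$. The right side has poles only at the finitely many poles of $p_{1},p_{2}$, so $f$ has only finitely many poles. Next, $T(r,F')=O(r)$. Since $F$ has finitely many poles, $F$ is a rational function times an entire function whose derivative has order at most $1$. Hence $\sigma(F)\le 1$, so $\sigma(f)\le 1$, and $T(r,f)=O(r)$. If $f$ were rational, the right side would be rational, which is impossible because $\alpha_{1}\ne\alpha_{2}$ are nonzero. So $f$ is transcendental, and rational functions are small with respect to $f$.

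\textbf{Step 2: integrating.} I would show that
\begin{equation*}
F=u_{1}{\rm e}^{\alpha_{1}z}+u_{2}{\rm e}^{\alpha_{2}z}+C,
\end{equation*}
where $C$ is a constant and $u_{1},u_{2}$ are rational functions satisfying $u_{j}'+\alpha_{j}u_{j}=(n+1)p_{j}$.

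To find $u_{j}$, expand $p_{j}$ in partial fractions. The polynomial part and the terms $(z-a)^{-m}$ with $m\ge 2$ have rational antiderivative contributions of the form $R\,{\rm e}^{\alpha_{j}z}$; these come from repeated integration by parts, and each step produces lower negative powers. What remains is a combination of integrals $\int {\rm e}^{\alpha_{j}z}(z-a)^{-1}\,dz$. Each such integral is a shifted exponential integral, which is multivalued (it carries a logarithmic term with coefficient ${\rm e}^{\alpha_{j}a}$ times the residue).

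Because $F$ is single-valued, the total residue of $F'$ at each point $a$ must vanish. These residues are $\sum_{j}(n+1){\rm e}^{\alpha_{j}a}\operatorname{Res}_{a}(\cdots)$, so this forces the combined logarithmic contributions to cancel. The single-valued remainder is then exactly of the stated form.

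The uniqueness of the rational $u_{j}$ follows from the linear ODE: any difference of two solutions is $K{\rm e}^{-\alpha_{j}z}$, and such a term, multiplied by ${\rm e}^{\alpha_{j}z}$, is absorbed into $C$.

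\textbf{Step 3: case $C=0$.} Then $f^{n+1}=u_{1}{\rm e}^{\alpha_{1}z}+u_{2}{\rm e}^{\alpha_{2}z}$ with $T(r,u_{j})=S(r,f)$.
\begin{itemize}
\item If both $u_{j}\ne 0$, this contradicts Lemma~\ref{le2.1} applied with exponent $n+1$.
\item If, say, $u_{2}=0$, then $u_{2}'+\alpha_{2}u_{2}=(n+1)p_{2}$ gives $(n+1)p_{2}=0$, contradicting $p_{2}\ne 0$.
\end{itemize}

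\textbf{Step 4: case $C\ne 0$.} Here $f^{n+1}-C=u_{1}{\rm e}^{\alpha_{1}z}+u_{2}{\rm e}^{\alpha_{2}z}$. I would apply the second main theorem to $F$ with the three small targets $0$, $C$ and $\infty$:
\begin{equation*}
T(r,F)\le \overline N(r,F)+\overline N\Bigl(r,\tfrac1F\Bigr)+\overline N\Bigl(r,\tfrac1{F-C}\Bigr)+S(r,F).
\end{equation*}
The three counting terms are bounded as follows.
\begin{itemize}
\item $\overline N(r,F)=O(\log r)$, since $F$ has finitely many poles.
\item $\overline N(r,1/F)\le \frac{1}{n+1}T(r,F)$, since every zero of $F$ has multiplicity at least $n+1\ge 3$.
\item $\overline N\bigl(r,1/(F-C)\bigr)$ counts the zeros of $u_{1}{\rm e}^{(\alpha_{1}-\alpha_{2})z}+u_{2}$. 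This is at most $T\bigl(r,{\rm e}^{(\alpha_{1}-\alpha_{2})z}\bigr)+O(\log r)=\frac{|\alpha_{1}-\alpha_{2}|}{\pi}r+O(\log r)$.
\end{itemize}

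On the other hand, I would estimate $T(r,F)$ from below. Nevanlinna's theorem on three functions (or a direct Borel-type computation with $u_{1}{\rm e}^{\alpha_{1}z}+u_{2}{\rm e}^{\alpha_{2}z}+C$) gives $T(r,F)\ge T\bigl(r,{\rm e}^{(\alpha_{1}-\alpha_{2})z}\bigr)+S(r,f)$; more precisely, $T(r,F)$ equals the perimeter term of the convex hull of $\{0,\alpha_{1},\alpha_{2}\}$ divided by $2\pi$. Substituting this into the second main theorem inequality and using the bound $\frac{1}{n+1}\le\frac13$ should give a contradiction.

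I expect this last comparison to be the main obstacle. The bound $\overline N\bigl(r,1/(F-C)\bigr)\le T\bigl(r,{\rm e}^{(\alpha_{1}-\alpha_{2})z}\bigr)$ might be too weak when $0$ lies near the segment $[\alpha_{1},\alpha_{2}]$. In that case I would instead apply Cartan's theorem to the three entire functions $u_{1}{\rm e}^{\alpha_{1}z}$, $u_{2}{\rm e}^{\alpha_{2}z}$ and $C$ (after clearing the denominators of the $u_{j}$), using that their sum $F$ has all zeros of multiplicity at least $3$ and that no proper subsum can vanish identically. Cartan's theorem then yields
\begin{equation*}
T(r)\le \overline N_{2}\Bigl(r,\tfrac1F\Bigr)+o(T(r))\le \tfrac{2}{n+1}T(r)+o(T(r)),
\end{equation*}
which is a contradiction since $n\ge 2$.
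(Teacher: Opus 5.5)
\textbf{The gap is in Step~2.} Single-valuedness of $F$ only forces the \emph{sum} $\mathrm{Res}_a(p_1{\rm e}^{\alpha_1 z})+\mathrm{Res}_a(p_2{\rm e}^{\alpha_2 z})$ to vanish at each pole $a$. A rational solution of $u_j'+\alpha_ju_j=(n+1)p_j$ needs more: each residue must vanish \emph{separately}, since a rational $u_j$ makes $u_j{\rm e}^{\alpha_jz}/(n+1)$ a single-valued primitive of $p_j{\rm e}^{\alpha_jz}$.

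Suppose $p_1,p_2$ share a pole $a$ at which the residues cancel. The logarithms do cancel. However, the single-valued leftover contains a nonzero multiple of the entire function $\int_a^z\frac{{\rm e}^{\alpha_1(t-a)}-{\rm e}^{\alpha_2(t-a)}}{t-a}\,dt$, which is not of your form.

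Concretely, take $p_1=1/z$, $p_2=-1/z$. Then $F=(n+1)\int_0^z\frac{{\rm e}^{\alpha_1t}-{\rm e}^{\alpha_2t}}{t}\,dt+C$. Suppose $F=u_1{\rm e}^{\alpha_1z}+u_2{\rm e}^{\alpha_2z}+C'$ with $u_j$ rational. Differentiating and applying Lemma~\ref{le2.8} gives $u_1'+\alpha_1u_1=(n+1)/z$, which has no rational solution (compare pole orders at $0$). For such admissible equations your Steps~3 and~4 have nothing to act on. $F$ is not a three-term exponential sum with small coefficients, so neither Lemma~\ref{le2.1} nor Cartan's theorem applies as you set them up.

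\textbf{What survives, and a fix.} Your argument is correct whenever each $p_j{\rm e}^{\alpha_jz}$ has zero residues, e.g.\ when $p_1,p_2$ have no common pole. In particular it covers constant $p_j$, which is the only case the paper ever uses (Lemma~\ref{le2.7} and Theorems~\ref{th1.1}--\ref{th1.3}). There $u_j=(n+1)p_j/\alpha_j$, Step~3 follows from Lemma~\ref{le2.1}, and the Cartan version of Step~4 is sound. The three-value second main theorem route genuinely fails, e.g.\ for $\alpha_2=-\alpha_1$, $n=2$.

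To prove the lemma as stated (the paper only quotes it), avoid integrating. Since $p_1p_2'-p_1'p_2+(\alpha_2-\alpha_1)p_1p_2\not\equiv0$, the function $H=p_1{\rm e}^{\alpha_1z}+p_2{\rm e}^{\alpha_2z}$ satisfies $H''+b_1H'+b_0H=0$ with rational $b_0,b_1$. Substituting $H=f^nf'$ gives
\[
f^{n-2}\bigl[n(n-1)(f')^3+3nff'f''+f^2f'''+b_1\bigl(nf(f')^2+f^2f''\bigr)+b_0f^2f'\bigr]\equiv0 .
\]
Take a zero of $f$ of order $k$, with leading Taylor coefficient $c$, at which $b_0,b_1$ are analytic. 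There the bracket has lowest-order coefficient $c^3k\bigl[n(n-1)k^2+3nk(k-1)+(k-1)(k-2)\bigr]\neq0$; this is exactly where $n\ge2$ enters. Hence $f$ has finitely many zeros and poles. Since $\sigma(f)\le1$, this gives $f=R{\rm e}^{\beta z}$ with $R$ rational. Then $f^nf'=R^n(R'+\beta R){\rm e}^{(n+1)\beta z}$ is a rational multiple of a single exponential, contradicting Lemma~\ref{le2.8}.
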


\begin{lemma}[{See \cite[Theorems~2.1~and~2.2]{cf}}]\label{le2.3}
Let $f(z)$ be a meromorphic function of finite order, and let $\eta$ be a be a
nonzero complex number. Then
\begin{equation*}
\begin{split}
T(r,f(z+\eta))=T(r,f)+S(r,f),\\
N(r,f(z+\eta))=N(r,f)+S(r,f).
\end{split}
\end{equation*}
\end{lemma}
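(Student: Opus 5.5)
This lemma is quoted from \cite[Theorems~2.1 and~2.2]{cf}, so the paper only needs to invoke it. Still, here is how I would reprove it, writing $g(z)=f(z+\eta)$ and bounding $T(r,g)$ and $N(r,g)$ above and below by the corresponding quantities for $f$.

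\textbf{Counting functions.} The poles of $g$ in $|z|<r$ are the poles of $f$ in the disk $|z+\eta|<r$, which lies between $|z|<r-|\eta|$ and $|z|<r+|\eta|$. Hence
\begin{equation*}
N(r-|\eta|,f)+O(\log r)\le N(r,g)\le N(r+|\eta|,f)+O(\log r).
\end{equation*}
The $O(\log r)$ accounts for a possible pole at the origin and the shift of the base point. Next I compare $N(r+|\eta|,f)$ with $N(r,f)$. Since $f$ has finite order, $n(r,f)$ and $N(r,f)$ grow at most polynomially. The comparison then follows from the standard growth lemma: for a nondecreasing $\phi$ of finite order, $\phi(r+s)=\phi(r)+o(\phi(r))$ outside a set of finite logarithmic measure. (This is where "finite logarithmic measure" enters $S(r,f)$.) Applying it, and the same argument with $-\eta$ in place of $\eta$, gives $N(r,g)=N(r,f)+S(r,f)$, using $N(r,f)\le T(r,f)+O(1)$.

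\textbf{Characteristic.} The key input is the difference analogue of the logarithmic derivative lemma from \cite{cf}: for finite-order $f$,
\begin{equation*}
m\Bigl(r,\frac{f(z+\eta)}{f(z)}\Bigr)=S(r,f).
\end{equation*}
Then $m(r,g)\le m(r,f)+m(r,g/f)=m(r,f)+S(r,f)$. Combining this with the pole estimate gives $T(r,g)\le T(r,f)+S(r,f)$. For the reverse inequality, apply the same argument to $f(z)=g(z-\eta)$, which needs $g$ of finite order. That holds because $T(r,g)\le T(r+|\eta|,f)+O(1)$ by the elementary shift estimate. The error term is then $S(r,g)$, and since $T(r,g)$ and $T(r,f)$ are comparable, $S(r,g)=S(r,f)$.

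\textbf{Main obstacle.} The genuinely hard step is the difference logarithmic derivative lemma itself. Its proof in \cite{cf} goes through a Poisson--Jensen estimate of $\log|f(z+\eta)/f(z)|$ followed by a Borel-type growth lemma. Since that result is precisely what the citation provides, I would take it as given here.
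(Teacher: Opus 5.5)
The paper does not prove this lemma; it simply quotes Chiang--Feng. Your treatment of the characteristic is essentially theirs: Theorem~2.1 of \cite{cf} comes from their counting-function theorem together with their estimate $m(r,f(z+\eta)/f(z))+m(r,f(z)/f(z+\eta))=O(r^{\sigma-1+\varepsilon})$, used in both directions.

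The real difference is in the counting function. Chiang--Feng argue directly. They use $n(t,f(z+\eta))\le n(t+|\eta|,f)$, $n(t,f)=O(t^{\lambda+\varepsilon})$ with $\lambda=\lambda(1/f)$, and
\[
N(r+|\eta|,f)-N(r,f)=\int_r^{r+|\eta|}\frac{n(t,f)}{t}\,dt\le\frac{|\eta|}{r}\,n(r+|\eta|,f).
\]
This gives an explicit error $O(r^{\lambda-1+\varepsilon})+O(\log r)$ for \emph{every} $r$. You instead use a Borel-type growth lemma, which yields only $o(N(r,f))$ off a set of finite logarithmic measure. That is the Halburd--Korhonen--Tohge lemma \cite[Lemma~8.3]{hk2}, which the paper itself invokes in Remark~\ref{re2.1}.

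Each route buys something. Theirs is quantitative and exception-free. Yours delivers exactly the $S(r,f)$ form stated here, and with the same lemma extends to hyper-order $<1$. In fact $O(r^{\sigma-1+\varepsilon})$ is not automatically $S(r,f)$: if the lower order of $f$ is below $\sigma-1$, then $T(r,f)<r^{\sigma-1}$ on a set of infinite logarithmic measure. For consistency, the difference logarithmic-derivative lemma you quote in the form $m(r,f(z+\eta)/f(z))=S(r,f)$ should be credited to Halburd--Korhonen \cite{hk}, not to \cite{cf}.

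Two of your intermediate inequalities are misstated, though neither breaks the argument.

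First, $N(r,g)\le N(r+|\eta|,f)+O(\log r)$ is false in general. Integrating $n(t,g)\le n(t+|\eta|,f)$ against $dt/t$ rather than $dt/(t+|\eta|)$ leaves the term $|\eta|\int_1^r n(t+|\eta|,f)\,\frac{dt}{t(t+|\eta|)}$. For a counterexample, take $f=1/P$, where $P$ is the canonical product with a zero of multiplicity $\lfloor k^{s}\rfloor$ at each $k\in\mathbb{N}$ ($s>0$), and $\eta=1$; then $N(r,g)-N(r+1,f)\asymp r^{s}$. Because $(t+|\eta|)/t\to 1$, the extra term is $o(N(r+|\eta|,f))+O(1)$. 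So the corrected bound $N(r,g)\le(1+o(1))N(r+|\eta|,f)+O(\log r)$, and its mirror image, still feed into your growth lemma.

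Second, $T(r,g)\le T(r+|\eta|,f)+O(1)$ also fails with an additive constant. For a Mittag-Leffler function of order $\rho>1$, with $\eta$ pointing into its growth sector, the excess is of order $r^{\rho-1}$. The correct elementary estimate is Goldberg--Ostrovskii's $T(r,g)\le(1+o(1))T(r+|\eta|,f)$, which still gives $\sigma(g)<\infty$. You can avoid this detour entirely. Bound $m(r,f(z)/f(z+\eta))$ directly, via Lemma~\ref{le2.4} with $\eta_1=0$, $\eta_2=\eta$, or its Halburd--Korhonen analogue. This gives $m(r,f)\le m(r,g)+S(r,f)$ without passing through $g$.
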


\begin{remark}\label{re2.1}
Under the conditions of Lemma \ref{le2.3}, by {\rm\cite[Lemma~$8.3$]{hk2}},
the following result holds:
\begin{equation*}
N\Big(r,\frac{1}{f(z+\eta)}\Big)=N\Big(r,\frac{1}{f}\Big)+S(r,f).
\end{equation*}
\end{remark}

\begin{lemma}[{See \cite[Corollary~2.6]{cf}}]\label{le2.4}
Let $\eta_{1},~\eta_{2}$ be two complex numbers such that $\eta_{1}\neq \eta_{2}$, and
let $f$ be a finite order meromorphic function. Let $\sigma$ be the order of $f$. Then for
each $\varepsilon>0$, we have
\begin{equation*}
m\Big(r,\frac{f(z+\eta_{1})}{f(z+\eta_{2})}\Big)=O(r^{\sigma-1+\varepsilon})+O(\log r).
\end{equation*}
\end{lemma}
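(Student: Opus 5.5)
The statement is Lemma~\ref{le2.4}, quoted from \cite{cf}. The plan is to estimate $\log|f(z+\eta_1)/f(z+\eta_2)|$ pointwise with the Poisson--Jensen formula and then integrate over the circle $|z|=r$.

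\textbf{Step 1 (reduction).} First reduce to $\eta_2=0$. Put $g(z)=f(z+\eta_2)$ and $\eta=\eta_1-\eta_2\neq 0$. By Lemma~\ref{le2.3}, $g$ has the same order $\sigma$ as $f$, and
\[
\frac{f(z+\eta_1)}{f(z+\eta_2)}=\frac{g(z+\eta)}{g(z)}.
\]
So it suffices to prove $m\bigl(r,g(z+\eta)/g(z)\bigr)=O(r^{\sigma-1+\varepsilon})+O(\log r)$. Since $m(r,h)\le m(r,1/h)+\log^{+}$-type terms are not symmetric in general, we bound $\bigl|\log|g(z+\eta)/g(z)|\bigr|$ directly; this controls both $m(r,h)$ and $m(r,1/h)$.

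\textbf{Step 2 (Poisson--Jensen).} Fix $R=2r$, which exceeds $r+|\eta|$ for large $r$, and let $z=re^{i\theta}$. Apply the Poisson--Jensen formula in $|\zeta|<R$ to $g$ at the two points $z$ and $z+\eta$ and subtract. This gives
\[
\log\Bigl|\frac{g(z+\eta)}{g(z)}\Bigr|
=\frac{1}{2\pi}\int_0^{2\pi}\log|g(Re^{i\varphi})|\,
\bigl(P(z+\eta,\varphi)-P(z,\varphi)\bigr)\,d\varphi
+\sum_{|a_\mu|<R}\log\Bigl|\frac{B_{a_\mu}(z+\eta)}{B_{a_\mu}(z)}\Bigr|
-\sum_{|b_\nu|<R}\log\Bigl|\frac{B_{b_\nu}(z+\eta)}{B_{b_\nu}(z)}\Bigr|.
\]
Here $P$ is the Poisson kernel, $B_a(\zeta)=R(\zeta-a)/(R^2-\bar a\zeta)$ is the Blaschke factor, and $a_\mu$, $b_\nu$ are the zeros and poles of $g$.

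\textbf{Step 3 (bounding each piece).} The three pieces are handled as follows.
\begin{itemize}
\item[(i)] \emph{Kernel term.} The kernel difference satisfies $|P(z+\eta,\varphi)-P(z,\varphi)|\le C|\eta|R/(R-r-|\eta|)^2=O(1/r)$. So the integral term is bounded by $O\bigl((m(R,g)+m(R,1/g))/r\bigr)=O(T(2r,g)/r)=O(r^{\sigma-1+\varepsilon})$.
\item[(ii)] \emph{Denominator factors.} The factors $\log|(R^2-\bar a z)/(R^2-\bar a(z+\eta))|$ are each $O(|\eta|/R)$. Since the numbers of zeros and poles satisfy $n(R,g)+n(R,1/g)=O(R^{\sigma+\varepsilon})$, their total contribution is again $O(r^{\sigma-1+\varepsilon})$.
\item[(iii)] \emph{Numerator factors.} The remaining terms are $\log|1+\eta/(z-a)|$. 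Their size is not bounded pointwise, but they are integrable in $\theta$.
\end{itemize}

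\textbf{Step 4 (main obstacle).} The hard part is (iii): zeros or poles $a$ lying near the circle $|z|=r$, where $\log|1+\eta/(re^{i\theta}-a)|$ has logarithmic singularities. I would use the elementary circle-mean estimate
\[
\frac{1}{2\pi}\int_0^{2\pi}\Bigl|\log\Bigl|1+\frac{\eta}{re^{i\theta}-a}\Bigr|\Bigr|\,d\theta\le C\,\frac{|\eta|}{r}\Bigl(1+\log^{+}\frac{r}{|\eta|}\Bigr),
\]
uniformly in $a$. To prove it, split the $\theta$-range into the arc where $|re^{i\theta}-a|\le 2|\eta|$, of length $O(|\eta|/r)$, on which both logarithmic singularities integrate to $O\bigl((|\eta|/r)\log(r/|\eta|)\bigr)$, and the complementary arc, where $|\log|1+w||\le 2|w|$ and $\int d\theta/|re^{i\theta}-a|=O(r^{-1}\log r)$. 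Summing over the $O(R^{\sigma+\varepsilon})$ zeros and poles in $|a|<2r$ gives $O(r^{\sigma-1+\varepsilon}\log r)$, and the $\log r$ factor is absorbed by enlarging $\varepsilon$. When there are few zeros and poles (for instance $\sigma<1$ or $g$ rational), the logarithmic losses and the bounded kernel contributions are what the extra $O(\log r)$ term in the statement accounts for.

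Integrating the pointwise bound over $\theta$ and combining (i)--(iii) yields the lemma.
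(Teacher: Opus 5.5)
Your proof is correct. The paper gives no proof of its own and only cites Chiang--Feng \cite{cf}, whose argument is the same as yours: the Poisson--Jensen formula in a disc of radius comparable to $r$, with the kernel difference bounded by $O(T(R,g)/r)$, the Blaschke denominators $\log|(R^{2}-\bar a z)/(R^{2}-\bar a(z+\eta))|$ bounded by $O(1/r)$ each, and the numerators $\log|1+\eta/(z-a)|$ summed over the $O(R^{\sigma+\varepsilon})$ zeros and poles. The only difference is in step (iii): Chiang--Feng majorize $\bigl|\log|(z+\eta-a)/(z-a)|\bigr|$ by powers $|\eta/(z-a)|^{\alpha}+|\eta/(z+\eta-a)|^{\alpha}$ with $\alpha<1$, which are integrable on circles uniformly in $a$, and then let $\alpha\to1$; your arc-splitting instead gives $O(r^{-1}\log r)$ per zero or pole, and the extra $\log r$ is absorbed into $\varepsilon$ as you say.
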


\begin{lemma}[{See \cite[Lemma~2.2]{cm2}}]\label{le2.5}
Let $f$ be a transcendental meromorphic solution of the differential-difference equation
\begin{equation*}
f^n(z)f'(z)+P_d(z,f)=H(z),
\end{equation*}
where $n(\geq 2)$ is an integer, $P_d(z,f)$ is a difference polynomial in $f$ of degree
$d\leq n-1$, and $H(z)$ is meromorphic function satisfying $N(r,H)=S(r,f)$. If
$\sigma(f)<+\infty$, then $N(r,f)=S(r,f)$, and $\sigma(f)=\sigma(H)$.
\end{lemma}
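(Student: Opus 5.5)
The plan is to control the poles of $f$ by counting functions, and then to control $m(r,f)$ through the difference analogue of the logarithmic derivative lemma. Throughout, $f$ has finite order $\sigma$, so Lemma \ref{le2.3} and the lemma on $m(r,f(z+\eta)/f(z))$ (Lemma \ref{le2.4} with $\eta_2=0$) are available. I assume, as is standard for difference polynomials, that the coefficients of $P_d(z,f)$ are small functions of $f$.

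\emph{Step 1: $N(r,f)=S(r,f)$.} Rewrite the equation as $f^n f'=H-P_d(z,f)$. Poles of $f^nf'$ can only come from poles of $H$ or of $P_d$. A pole of $f$ of multiplicity $k$ is a pole of $f^nf'$ of multiplicity $(n+1)k+1$, so
\begin{equation*}
(n+1)N(r,f)+\overline{N}(r,f)=N(r,f^nf')\le N(r,H)+N(r,P_d(z,f)).
\end{equation*}
Each monomial of $P_d$ is a small coefficient times $\prod_j f(z+c_j)^{\lambda_j}$ with $\sum\lambda_j\le d$. Hence $N(r,P_d)\le \sum_j\lambda_j N(r,f(z+c_j))+S(r,f)$. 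By Lemma \ref{le2.3}, $N(r,f(z+c_j))=N(r,f)+S(r,f)$, so $N(r,P_d)\le dN(r,f)+S(r,f)$. Combined with $N(r,H)=S(r,f)$, this gives $(n+1-d)N(r,f)\le S(r,f)$. Since $d\le n-1$, we conclude $N(r,f)=S(r,f)$.

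\emph{Step 2: $\sigma(f)\le\sigma(H)$.} By Step 1, $T(r,f)=m(r,f)+S(r,f)$. Write each monomial of $P_d$ as $a(z)f^{\lambda}\prod_j\bigl(f(z+c_j)/f(z)\bigr)^{\lambda_j}$ with $\lambda=\sum_j\lambda_j\le d$. The difference logarithmic derivative lemma gives $m\bigl(r,f(z+c_j)/f(z)\bigr)=S(r,f)$, so $m(r,P_d)\le d\,m(r,f)+S(r,f)$. Next use the identity $f^{n+1}=(H-P_d)\cdot\frac{f}{f'}$, which yields
\begin{equation*}
(n+1)m(r,f)\le m(r,H)+d\,m(r,f)+m\Bigl(r,\frac{f}{f'}\Bigr)+S(r,f).
\end{equation*}
By the first main theorem and the logarithmic derivative lemma,
\begin{equation*}
m\Bigl(r,\frac{f}{f'}\Bigr)\le T\Bigl(r,\frac{f'}{f}\Bigr)+O(1)=N\Bigl(r,\frac{f'}{f}\Bigr)+S(r,f)\le \overline N(r,f)+\overline N\Bigl(r,\frac1f\Bigr)+S(r,f).
\end{equation*}
The right-hand side is at most $T(r,f)+S(r,f)$. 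This gives $(n-d)T(r,f)\le T(r,H)+S(r,f)$, and since $n-d\ge1$ we get $T(r,f)\le (1+o(1))T(r,H)$ outside a set of finite logarithmic measure. The standard lemma for removing such exceptional sets (a monotone comparison with $r$ replaced by $r^{1+\varepsilon}$ or $2r$) then gives $\sigma(f)\le\sigma(H)$.

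\emph{Step 3: $\sigma(H)\le\sigma(f)$.} This is immediate. $H=f^nf'+P_d(z,f)$, so $T(r,H)\le (n+1)T(r,f)+T(r,P_d)+S(r,f)\le O(T(r,f))+S(r,f)$, using Lemma \ref{le2.3} for the shifts.

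The main obstacle is Step 2, and specifically the term $m(r,f/f')$, which is not small. The approach works only because it costs at most one copy of $T(r,f)$, and the gap $n-d\ge1$ absorbs it. If that bound were too lossy, I would instead apply the argument to $f^{n+1}$ directly via $(f^{n+1})'/(n+1)=f^nf'$, but the estimate above already suffices under $d\le n-1$.
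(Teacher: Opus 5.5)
The paper states this lemma with a citation and gives no proof, so I assess your argument on its own. Step 3 is fine. Step 2 is fine once $N(r,f)=S(r,f)$ is known. One small repair there: get $m(r,P_d)\le d\,m(r,f)+S(r,f)$ pointwise from $|f|^{\lambda}\le\max\{1,|f|\}^{d}$ for $\lambda\le d$. Adding the monomial estimates would give a multiple of $d$.

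The genuine gap is Step 1. The bound $N(r,P_d(z,f))\le d\,N(r,f)+S(r,f)$ is false once different monomials involve different shifts. The poles of $f(z+c_j)$ for distinct $c_j$ lie at different points and do not add up at a single point. What is true is only
\[
N(r,P_d)\le \Lambda N(r,f)+S(r,f),\qquad \Lambda=\sum_j\Lambda_j,
\]
where $\Lambda_j$ is the largest exponent of $f(z+c_j)$ occurring in $P_d$. This $\Lambda$ is not controlled by $d$.

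For example, take $f(z)=1/({\rm e}^{z}-1)$ and $P_1(z,f)=f(z+1)+f(z+2)$, which has degree $1$. The pole sets $\{2k\pi i-1\}$ and $\{2k\pi i-2\}$ are disjoint, so $N(r,P_1)=2N(r,f)+O(\log r)$, while $T(r,f)=N(r,f)+O(\log r)$. Accordingly, for $n=2$ and $P_d(z,f)=\sum_{j=1}^{4}f(z+j)$ (so $d=1\le n-1$), your counting yields only $3N(r,f)+\overline{N}(r,f)\le 4N(r,f)+S(r,f)$, which says nothing. Your Step 1 works only when $\Lambda\le n$, for instance for a single monomial such as $q(z)f(z+c)$. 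That covers the paper's use of the lemma in Theorem \ref{th1.1}, where $f$ has in any case already been shown to be entire. It does not cover the lemma as stated, and since Step 2 rests on $T(r,f)=m(r,f)+S(r,f)$, the inequality $\sigma(f)\le\sigma(H)$ is also unproved in general.

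What is missing is a non-local argument that uses $\sigma(f)<\infty$ more strongly than Lemma \ref{le2.3} does. Let $z_0$ be a pole of $f$ of order $k$ that is not a pole of $H$ or of a coefficient. Then the equation forces some $f(z+c_j)$ to have at $z_0$ a pole of order at least $((n+1)k+1)/d>\frac{n+1}{n-1}k$. So $z_0+c_j$ is a pole of $f$ of strictly larger order.

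As in the first paragraph of the proof of Lemma \ref{le2.7}, an infinite chain of this kind would give $\lambda(1/f)=\infty$. Hence every chain must end at a pole of $H$ or of a coefficient, with multiplicities growing at least geometrically along it. You would then still have to bound the total pole mass of $f$ on such chains by the small pole mass of $H$ and of the coefficients. One way is to use that only polynomially many points $w-\sum_j a_jc_j$ with $\sum_j a_j=m$ lie $m$ steps before a given terminal point $w$. Without an argument of this type, Step 1 is not established.
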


\begin{lemma}[{See \cite[Corollary~3.3]{hk}}]\label{le2.6}
Let $f$ be a nonconstant finite order meromorphic solution of
$f^{n}(z)P(z,f)=Q(z,f)$,
where $P(z,f)$ and $Q(z,f)$ are difference polynomials
in $f$ with small meromorphic coefficients.
If the total degree of $Q(z,f)$ as a polynomial
in $f$ and its shifts is $\leq n$, then
\begin{equation*}
m(r,P(z,f))=S(r,f).
\end{equation*}
\end{lemma}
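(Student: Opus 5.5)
The plan follows the Clunie-type argument of Halburd and Korhonen, with the difference analogue of the logarithmic derivative lemma replacing the classical one. Write
\begin{equation*}
P(z,f)=\sum_{\lambda\in I} b_{\lambda}(z)\prod_{j=0}^{k} f(z+c_{j})^{\lambda_{j}},\qquad
Q(z,f)=\sum_{\mu\in J} a_{\mu}(z)\prod_{j=0}^{k} f(z+c_{j})^{\mu_{j}},
\end{equation*}
with $c_{0}=0$, finite index sets $I,J$, and small coefficients satisfying $T(r,a_{\mu})=S(r,f)$ and $T(r,b_{\lambda})=S(r,f)$. By hypothesis $|\mu|=\sum_{j}\mu_{j}\leq n$ for every $\mu\in J$. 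For fixed $r$, split the circle $|z|=r$ into
\begin{equation*}
E_{1}=\{\theta:|f(re^{i\theta})|<1\},\qquad E_{2}=\{\theta:|f(re^{i\theta})|\geq 1\},
\end{equation*}
so that $m(r,P)$ is the sum of $\frac{1}{2\pi}\int_{E_{1}}\log^{+}|P|\,d\theta$ and $\frac{1}{2\pi}\int_{E_{2}}\log^{+}|P|\,d\theta$. We bound these two integrals separately.

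On $E_{1}$ we use $P$ directly. Writing $f(z+c_{j})=f(z)\cdot\frac{f(z+c_{j})}{f(z)}$, each monomial of $P$ becomes
\begin{equation*}
b_{\lambda}\,f^{|\lambda|}\prod_{j\geq1}\Big(\frac{f(z+c_{j})}{f(z)}\Big)^{\lambda_{j}}.
\end{equation*}
Since $|f|<1$ on $E_{1}$, the factor $f^{|\lambda|}$ has modulus at most $1$. Hence
\begin{equation*}
\log^{+}|P|\leq \sum_{\lambda}\log^{+}|b_{\lambda}|+\sum_{\lambda}\sum_{j\geq 1}\lambda_{j}\log^{+}\Big|\frac{f(z+c_{j})}{f(z)}\Big|+O(1).
\end{equation*}

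On $E_{2}$ we use the equation, i.e.\ $P=Q/f^{n}$. Each term of $Q/f^{n}$ equals
\begin{equation*}
a_{\mu}\,f^{|\mu|-n}\prod_{j\geq1}\Big(\frac{f(z+c_{j})}{f(z)}\Big)^{\mu_{j}}.
\end{equation*}
Because $|\mu|\leq n$ and $|f|\geq1$, we have $|f|^{|\mu|-n}\leq 1$. This gives the same type of bound, with the $b_{\lambda}$ replaced by the $a_{\mu}$.

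Integrating the two pointwise bounds over $E_{1}$ and $E_{2}$ yields
\begin{equation*}
m(r,P(z,f))\leq \sum_{\lambda} m(r,b_{\lambda})+\sum_{\mu} m(r,a_{\mu})+K\sum_{j\geq1} m\Big(r,\frac{f(z+c_{j})}{f(z)}\Big)+O(1),
\end{equation*}
for a constant $K$ depending only on the degrees. The coefficient terms are $S(r,f)$ because the coefficients are small.

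The main obstacle is the shift-quotient terms, where we need $m\big(r,f(z+c_{j})/f(z)\big)=S(r,f)$ for finite-order $f$. Lemma \ref{le2.4} gives $O(r^{\sigma-1+\varepsilon})+O(\log r)$. This is $S(r,f)$ when $T(r,f)$ grows faster than $r^{\sigma-1+\varepsilon}$, but not in general. I would therefore invoke the full difference analogue of the logarithmic derivative lemma from \cite{hk} (equivalently \cite{cf}). It states that for finite-order meromorphic $f$ this proximity function is $o(T(r,f))$ outside a set of finite logarithmic measure, which matches the convention for $S(r,f)$ fixed in the paper. With that estimate the conclusion $m(r,P(z,f))=S(r,f)$ follows.
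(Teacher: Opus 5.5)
Your proposal is correct and follows the Clunie-type argument of Halburd and Korhonen that the paper relies on by citation; the paper gives no proof of its own. You split the circle according to $|f|<1$ or $|f|\geq 1$, estimate $P$ directly on the first set and $Q/f^{n}$ on the second after factoring out the shift quotients $f(z+c_{j})/f(z)$, and finish with the difference logarithmic derivative lemma in the form $m\big(r,f(z+c)/f(z)\big)=o(T(r,f))$ outside a set of finite logarithmic measure. The only slip is the parenthetical ``equivalently \cite{cf}'': the version from \cite{cf} quoted in the paper is the $O(r^{\sigma-1+\varepsilon})+O(\log r)$ bound of Lemma~\ref{le2.4}, which, as you yourself note, does not give $S(r,f)$, so the estimate must come from \cite{hk}.
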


\begin{remark}\label{re2.2}
Lemma \ref{le2.6} is still holds for the case $P(z,f)$, $Q(z,f)$ being differential-difference
polynomials in $f$ with functions of small proximity related to $f$ as its coefficients.
\end{remark}

\begin{lemma}\label{le2.7}
Let $n\geq 3$ be an integer, let $q(z)$ and $Q(z)$ be nonzero polynomilas such that $Q(z)$ is nonconstant,
and let $c,~p_{j},~\alpha_{j}~(j=1,2)$ be nonzero constants with $\alpha_{1}\neq\alpha_{2}$.
If Eq.\eqref{eq1.6} admits a finite-order meromorphic solution $f(z)$,
then $f(z)$ is entire and $\sigma(f)=\deg Q(z)=1$.
\end{lemma}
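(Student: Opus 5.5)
We need to prove Lemma 2.7. The statement has three parts: $f$ is entire, $\sigma(f)=\deg Q$, and $\deg Q=1$. The plan is to get each in turn from the earlier lemmas.

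\textbf{Step 1: $f$ is transcendental and has few poles.} First, $f$ cannot be rational. If it were, the left side of \eqref{eq1.6} would be $R_1(z)+R_2(z){\rm e}^{Q(z)}$ with rational $R_1,R_2$, while the right side is $p_1{\rm e}^{\alpha_1z}+p_2{\rm e}^{\alpha_2z}$. A Borel-type comparison of exponential polynomials with rational coefficients forces $Q(z)=\alpha_jz+\text{const}$ for some $j$. The remaining relation then reads $f^nf'=p_k{\rm e}^{\alpha_kz}$ (with $k\neq j$) or $f^nf'=0$, which is impossible for rational $f$.

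Next, take $H=p_1{\rm e}^{\alpha_1z}+p_2{\rm e}^{\alpha_2z}$ and view $q{\rm e}^{Q}f(z+c)$ as a difference polynomial of degree $1\le n-1$. Its coefficient $q{\rm e}^{Q}$ is small provided $T(r,{\rm e}^{Q})=S(r,f)$. If this smallness fails, I first compare growth. Lemma \ref{le2.6} (with Remark \ref{re2.2}), applied to $f^n\cdot f'=H-q{\rm e}^{Q}f(z+c)$, controls $m(r,f')$. Counting poles shows that any pole of $f$ of multiplicity $k$ gives a pole of order $(n+1)k+1$ on the left, while $f(z+c)$ contributes poles of order at most the multiplicity at $z+c$. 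Chasing the pole chain $z_0, z_0+c, z_0+2c,\dots$ as in the proof of Lemma \ref{le2.5}, the multiplicities would grow geometrically, like $n^m$, contradicting finite order. Hence $f$ has no poles at all: the argument works pointwise, since $q{\rm e}^{Q}$ is entire. So $f$ is entire. Note that this pole-chain argument does not actually need smallness of the coefficient, because ${\rm e}^Q$ has no poles; I would present it that way.

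\textbf{Step 2: the order equals $\deg Q$.} With $f$ entire and transcendental, write
\[
{\rm e}^{Q}=\frac{H-f^nf'}{qf(z+c)}.
\]
Using Lemma \ref{le2.3}, this gives $T(r,{\rm e}^Q)\le (n+2)T(r,f)+S(r,f)$, so $\deg Q\le\sigma(f)$.

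Conversely, suppose $\deg Q<\sigma(f)$. Then ${\rm e}^Q$ is small and Lemma \ref{le2.5} gives $\sigma(f)=\sigma(H)=1$, so $\deg Q=0$. This contradicts $Q$ being nonconstant. Hence $\sigma(f)=\deg Q$.

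\textbf{Step 3: $\deg Q=1$ (the main obstacle).} Set $m=\deg Q\ge1$ and suppose $m\ge2$. Differentiate the equation to eliminate one exponential. Concretely, $F:=f^nf'$ satisfies
\[
F+q{\rm e}^{Q}f(z+c)=H.
\]
Apply $L=(D-\alpha_1)(D-\alpha_2)$; this annihilates $H$ and gives
\[
L(F)=-L\bigl(q{\rm e}^{Q}f(z+c)\bigr)={\rm e}^{Q}\,G,
\]
where $G$ is a differential polynomial in $f(z+c)$ with polynomial coefficients involving $Q'$.

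Then estimate $T(r,f)$ through $m(r,\cdot)$. Dividing by $f^{n-1}$, Lemma \ref{le2.6} and the logarithmic-derivative lemmas give $m(r,f)$-type bounds. I expect these to force $nT(r,f)\le T(r,{\rm e}^Q)+T(r,f)+S(r,f)$ and, the other way, $T(r,{\rm e}^{Q})\le(n+2)T(r,f)$. These inequalities alone do not decide the degree, so I would instead use Cartan's second main theorem, as the paper's method indicates. The key is to consider the three entire functions
\[
f^nf',\qquad q{\rm e}^Qf(z+c),\qquad -H.
\]
Their sum is zero, and their derivatives are related through the exponential factors. When $m\ge2$, ${\rm e}^{Q}$ has order $m>1=\sigma(H)$. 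I plan to apply the Nevanlinna/Borel theorem for $g_1+g_2+g_3=1$ with
\[
g_1=\frac{f^nf'}{H},\qquad g_2=\frac{q{\rm e}^Qf(z+c)}{H}.
\]
This yields either linear dependence, or $T(r,g_j)\le N$-terms that Cartan bounds by roughly $\overline N$ of zeros. Since $f$ is entire, zeros of $f$ appear with multiplicity at least $n$ in $g_1$, and this suppresses the truncated counting functions when $n\ge3$. Linear dependence then forces ${\rm e}^{Q}f(z+c)$ to be a combination of ${\rm e}^{\alpha_jz}$. Comparing orders with $f^nf'$ of order $m$ then shows $m=1$. The delicate part is making the Cartan estimate close with $n\ge3$. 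In particular, the zeros of $f'$ not shared with $f$ must be handled by bounding $N(r,1/f')\le T(r,f)+S(r,f)$.
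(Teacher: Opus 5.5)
Your Step 1 is the paper's pole-chain argument and is fine. The genuine gap is Step 3: it is a plan rather than a proof, and the estimate you sketch does not close at $n=3$.

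If you apply Cartan to the three-term family $p_1{\rm e}^{\alpha_1z}$, $p_2{\rm e}^{\alpha_2z}$, $-q{\rm e}^{Q}f(z+c)$, the truncation is $N_2$. For $n=3$ you then only learn that multiple zeros of $f$ are sparse. This is exactly the paper's \eqref{eq5.6}, not a contradiction.

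If instead you use the two-term identity $f^nf'+q{\rm e}^{Q}f(z+c)=H$ and the second main theorem for $g_1$, compare the lower bound $T(r,g_1)\ge (n+1)T(r,f)-\overline N(r,1/f)-O(r)-S(r,f)$ with the upper bound $\overline N(r,1/f)+\overline N(r,1/f')+N(r,1/f)+O(r)+S(r,f)$. With your proposed estimate $N(r,1/f')\le T(r,f)+S(r,f)$, this yields only $(n+1)T(r,f)\le 4T(r,f)+S(r,f)$, which is empty when $n=3$.

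The two-term route can be completed, but it needs three steps you do not give.
\begin{itemize}
\item Divide out the common zeros of $f^nf'$ and $q{\rm e}^{Q}f(z+c)$. They are zeros of $H$, so this costs $O(r)$, and $O(r)=S(r,f)$ because $T(r,{\rm e}^{Q})\le(n+2)T(r,f)+O(r)+S(r,f)$ with $\deg Q\ge2$.
\item Use the full-multiplicity bound of Lemma~\ref{le2.10}, $nN(r,1/f)+N(r,1/f')\le T(r)+O(r)$, against the truncated upper bound. This gives $(n-2)N(r,1/f)\le S(r,f)$.
\item Only then does the upper bound collapse to $\overline N(r,1/f')+S(r,f)\le T(r,f)+S(r,f)$, contradicting $T(r)\ge(n+1)T(r,f)-S(r,f)$.
\end{itemize}

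The paper avoids Cartan here entirely. It differentiates \eqref{eq1.6} to eliminate ${\rm e}^{Q}$, not $H$. Your operator $L=(D-\alpha_1)(D-\alpha_2)$ kills the small term $H$ and leaves the large factor ${\rm e}^{Q}$ as a coefficient, so no Clunie-type estimate is available; that is why your attempt stalled.

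Eliminating ${\rm e}^{Q}$ gives $f^{n-1}\psi=qG\bigl(\frac{G'}{G}-\frac{q'}{q}-\frac{f'(z+c)}{f(z+c)}-Q'\bigr)f(z+c)$, with $\psi$ entire and $\psi\not\equiv0$ by Lemma~\ref{le2.2}. Since $\deg Q\ge2$, the coefficient has proximity function $S(r,f)$. Lemma~\ref{le2.6} then gives $m(r,\psi)=m(r,f\psi)=S(r,f)$; the second equality is where $n\ge3$ enters. Hence $T(r,f)\le m(r,f\psi)+T(r,\psi)+O(1)=S(r,f)$.

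There is also a smaller gap in Step 2. Your inequality $T(r,{\rm e}^{Q})\le(n+2)T(r,f)+S(r,f)$ drops $T(r,H)=O(r)$. With that term restored, it yields $\deg Q\le\sigma(f)$ only when $\deg Q\ge2$. Once $\deg Q=1$ is known, you must still exclude $\sigma(f)<1$. The paper does this with Lemma~\ref{le2.8}: in that case $p_1{\rm e}^{\alpha_1z}+p_2{\rm e}^{\alpha_2z}-q{\rm e}^{\alpha z+\beta}f(z+c)-f^nf'=0$ has coefficients of order $<1$. Your use of Lemma~\ref{le2.5} to exclude $\sigma(f)>\deg Q$ is fine.
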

\begin{proof}
Suppose that $f(z)$ has at least one pole. Let $z_{1}$ be a pole of $f(z)$ of multiplicity
$k_{1}(\geq 1)$. It follows from \eqref{eq1.6} that $z_{1}+c$ is a pole of $f(z)$ of
multiplicity $k_{2}>(n+1)k_{1}$. Iterating the above argument, we obtain a sequence $\{z_{1}+(j-1)c\}$
of poles of $f(z)$ with multiplicity $k_{j}>(n+1)^{j-1}k_{1}$ for $j=2,3,\ldots$, which implies
$\lambda(\frac{1}{f})=+\infty$. This contradicts $\sigma(f)<+\infty$.
Hence $f(z)$ is entire.\par

Suppose that $\deg Q(z)>1$, it follows from \eqref{eq1.6} and Lemma \ref{le2.3} that
\begin{equation}\label{eq2.1}
m(r,{\rm e}^{Q})\leq O(r)+(n+2)T(r,f)+S(r,f).
\end{equation}
Then $f(z)$ is transcendental entire and
\begin{equation}\label{eq2.2}
S(r,{\rm e}^{Q})\subset S(r,f).
\end{equation}
Denote $G(z)=p_{1}{\rm e}^{\alpha_{1}z}+p_{2}{\rm e}^{\alpha_{2}z}$. Differentiating \eqref{eq1.6}
and eliminating ${\rm e}^{Q(z)}$ yields
\begin{equation}\label{eq2.3}
f^{n-1}\psi=qG\Big(\frac{G'}{G}-\frac{q'}{q}-\frac{f'(z+c)}{f(z+c)}-Q'\Big)f(z+c),
\end{equation}
where
\begin{equation}\label{eq2.4}
\psi=nqf(z+c)(f')^{2}+qff''f(z+c)-[q'f(z+c)+qQ'f(z+c)+qf'(z+c)]ff'.
\end{equation}
We deduce that $\psi\not\equiv 0$. Indeed, if $\psi\equiv 0$, then by \eqref{eq2.3},
we would have $dq(z){\rm e}^{Q(z)}f(z+c)=G(z)$, where $d\in\mathbb{C}\setminus\{0\}$ is a constant.
Substituting this into \eqref{eq1.6} yields $f^{n}(z)f'(z)=(1-\frac{1}{d})G(z)$.
If $d=1$, then $f^{n}(z)f'(z)=0$, which is impossible.
If $d\neq 1$, we can also get a contradiction by Lemma \ref{le2.2}.
Noting that $\deg Q(z)>1$, by the lemma on the logarithmic derivative and Lemmas \ref{le2.3}
and \ref{le2.4}, we have
\begin{equation}\label{eq2.5}
m\Big(r,qG\Big(\frac{G'}{G}-\frac{q'}{q}-\frac{f'(z+c)}{f(z+c)}-Q'\Big)\Big)=S(r,f).
\end{equation}
Then, by \eqref{eq2.3}--\eqref{eq2.5}, Lemma \ref{le2.6} and Remark \ref{re2.2}, we have
\begin{equation}\label{eq2.6}
m(r,\psi)=S(r,f),\quad m(r,f\psi)=S(r,f).
\end{equation}
Since $f(z)$ is entire, then $N(r,\psi)=O(\log r)$. It follows from \eqref{eq2.6} that
\begin{equation*}
\begin{split}
T(r,f)=m(r,f)& \leq m(r,f\psi)+m\Big(r,\frac{1}{\psi}\Big)\\
& \leq m(r,\psi)+N(r,\psi)+S(r,f)=S(r,f),
\end{split}
\end{equation*}
which is impossible. So $\deg Q(z)=1$. Write $Q(z)=\alpha z+\beta$, where $\alpha$ and $\beta$
are constants with $\alpha\neq 0$.\par

Finally, we show that $\sigma(f)=1$. If $\sigma(f)<1$, it follows from \eqref{eq1.6} that
\begin{equation}\label{eq2.7}
p_{1}{\rm e}^{\alpha_{1}z}+p_{2}{\rm e}^{\alpha_{2}z}-q(z)f(z+c){\rm e}^{\alpha z+\beta}-f^{n}(z)f'(z)=0.
\end{equation}
If $\alpha=\alpha_{j}$, $j\in\{1,2\}$ or $\alpha\neq\alpha_{j}$, $j=1,2$,
by Lemma \ref{le2.8}, we get $f^{n}(z)f'(z)\equiv 0$, a contradiction. If $\sigma(f)>1$,
\eqref{eq1.6} can be rewritten  as
\begin{equation}\label{eq2.8}
f^{n}(z)f'(z)=p_{1}{\rm e}^{\alpha_{1}z}+p_{2}{\rm e}^{\alpha_{2}z}-q(z){\rm e}^{\alpha z+\beta}f(z+c).
\end{equation}
Since $T(r,q(z){\rm e}^{\alpha z+\beta})=S(r,f)$ and $T(r,p_{1}{\rm e}^{\alpha_{1}z}+p_{2}{\rm e}^{\alpha_{2}z})=S(r,f)$,
it follows from Lemma \ref{le2.6} and Remark \ref{re2.2} that $m(r,f')=S(r,f)$ and $m(r,ff')=S(r,f)$.
Then $T(r,f)=m(r,f)\leq m(r,ff')+m(r,\frac{1}{f'})\leq m(r,f')+N(r,f')+S(r,f)=S(r,f)$, which is impossible.
Hence $\sigma(f)=1$.\par
\end{proof}

\begin{lemma}[{See \cite[Theorem~1.51]{yy}}]\label{le2.8}
Suppose that $f_{1}, f_{2},\ldots,f_{n}(n\geq 2)$ are meromorphic functions and
$g_{1}, g_{2},\ldots,g_{n}$ are entire functions satisfying the following conditions:\\
$(i)$ $\sum_{j=1}^{n}\limits f_{j}{\rm e}^{g_{j}}\equiv 0.$\\
$(ii)$ $g_{j}-g_{k}$ are not constants for $1\leq j<k\leq n$.\\
$(iii)$ For $1\leq j\leq n, 1\leq h<k\leq n$,
$$T(r,f_{j})=o(T(r,{\rm e}^{g_{h}-g_{k}}))~(r\rightarrow \infty, r\not\in E),$$
where $E$ is a set with finite linear measure.\\
Then $f_{j}\equiv 0~(j=1,\ldots, n)$.
\end{lemma}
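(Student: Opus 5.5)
This is the classical Borel-type theorem (Theorem 1.51 in Yi--Yang), so I would prove it by induction on $n$. I would use Nevanlinna's second main theorem in the form of the lemma on $\sum_{j} h_j\equiv 1$ (Borel--Nevanlinna identity), together with the logarithmic derivative lemma.

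First, a reduction. Suppose some $f_j\not\equiv 0$. Discarding the identically vanishing terms, we may assume all $f_j\not\equiv 0$ and $n\ge 2$; note that conditions (ii) and (iii) persist for the subfamily. If only one term remained, $f_1{\rm e}^{g_1}\equiv 0$ would force $f_1\equiv 0$, so we may indeed take $n\ge2$.

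Next, normalize. Divide by $f_n{\rm e}^{g_n}$ and set
\[
h_j=-\frac{f_j}{f_n}{\rm e}^{g_j-g_n},\qquad j=1,\ldots,n-1,
\]
so that $\sum_{j=1}^{n-1}h_j\equiv 1$.
\begin{itemize}
\item[] If $n=2$, then $h_1\equiv 1$. This gives ${\rm e}^{g_1-g_2}=-f_2/f_1$. Hence $T(r,{\rm e}^{g_1-g_2})\le T(r,f_1)+T(r,f_2)+O(1)=o(T(r,{\rm e}^{g_1-g_2}))$. Since $g_1-g_2$ is nonconstant, $T(r,{\rm e}^{g_1-g_2})\to\infty$, and we have a contradiction.
\item[] For $n\ge 3$, first suppose the $h_j$ are linearly independent. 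Then the Borel--Nevanlinna lemma (a consequence of the second main theorem applied via the Wronskian/Cramer's rule) gives
\[
T(r,h_k)\le \sum_{j=1}^{n-1}\Big(N(r,1/h_j)+N(r,h_j)\Big)-N(r,W)\cdots+O(\log T(r))+O(\log r),
\]
more usefully the standard form $T(r,h_k)\le \sum_j \overline N(r,1/h_j)+\sum_j\overline N(r,h_j)+S(r)$ up to factors $n-1$. Because the exponentials are entire and zero-free, the zeros and poles of $h_j$ come only from $f_j,f_n$. Their counting functions are therefore $o(T(r,{\rm e}^{g_h-g_k}))$ by (iii). On the other hand $T(r,h_k)\ge T(r,{\rm e}^{g_k-g_n})-T(r,f_k)-T(r,f_n)$, which is $\sim T(r,{\rm e}^{g_k-g_n})$. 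Comparing gives a contradiction, provided the error term $S(r)=O(\log T(r)+\log r)$ outside a finite-measure set is $o(T(r,{\rm e}^{g_k-g_n}))$. For transcendental ${\rm e}^{g}$ this holds since $\log r=o(T(r,{\rm e}^{g}))$.
\item[] If instead the $h_j$ are linearly dependent, take a nontrivial relation $\sum a_j h_j\equiv0$ with constants $a_j$. This is a relation of the same shape as (i) with fewer terms (coefficients $a_jf_j/f_n$, exponents $g_j-g_n$). Conditions (ii) and (iii) still hold, because the pairwise differences are unchanged. The induction hypothesis then forces $a_jf_j\equiv 0$ for those $j$ with $a_j\neq0$, contradicting $f_j\not\equiv0$.
\end{itemize}

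The main obstacle is making the Nevanlinna bound precise. The error term must be controlled uniformly against all $T(r,{\rm e}^{g_h-g_k})$, and the exceptional sets of finite linear measure must be handled. One has to check that every $T(r,h_j)$ is comparable with some $T(r,{\rm e}^{g_h-g_k})$, so that $S(r,h_j)$ is absorbed into the $o(\cdot)$ in (iii). I would do this by defining $T(r)=\max_{h<k}T(r,{\rm e}^{g_h-g_k})$ and noting that every $T(r,h_j)$, and every $T(r,f_j)$, is $O(T(r))$.
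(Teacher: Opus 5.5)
The paper gives no proof of this lemma; it is quoted from Yi--Yang, Theorem~1.51. Your argument is the standard proof from that source. You normalize to $\sum_{j=1}^{n-1}h_j\equiv 1$, use Nevanlinna's theorem for linearly independent families (equivalently Cartan's theorem, Lemma~\ref{le2.9}) to exclude independence, and in the dependent case obtain a shorter relation of the same type and induct. One small omission in the dependent case: it is the coefficients $a_jf_j/f_n$, not only the exponents, that change. Condition (iii) still holds for them because $T(r,f_j/f_n)\le T(r,f_j)+T(r,f_n)+O(1)$.

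The step that does not work as written is comparing the error term with a fixed $T(r,{\rm e}^{g_k-g_n})$. In the family theorem the error is $O(\log \mathcal{T}(r)+\log r)$, where $\mathcal{T}(r)=\max_j T(r,h_j)$. Nothing forces $\log\mathcal{T}(r)=o(T(r,{\rm e}^{g_k-g_n}))$ for a fixed $k$, since the $g_j-g_n$ may grow at very different rates (think of $z$ and ${\rm e}^{{\rm e}^{z}}$). Your remark that $\log r=o(T(r,{\rm e}^{g}))$ only handles the $\log r$ part. Your proposed normalization by $\max_{h<k}T(r,{\rm e}^{g_h-g_k})$ only gives upper bounds, and what is missing is a matching lower bound for the left-hand side.

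The repair is to apply the inequality, for each $r$, at an index $k=k(r)$ with $T(r,h_k)=\mathcal{T}(r)$ (or to sum it over $k$). The left side is then $\mathcal{T}(r)$. Since $\mathcal{T}(r)\ge T(r,h_1)\ge(1-o(1))T(r,{\rm e}^{g_1-g_n})$ for $r\notin E$, three things follow. The counting terms, which are bounded by multiples of $\sum_j T(r,f_j)$, are $o(\mathcal{T}(r))$ by (iii) applied to the pair $(1,n)$. Also $\log r=o(\mathcal{T}(r))$ because ${\rm e}^{g_1-g_n}$ is transcendental. Finally $\log\mathcal{T}(r)=o(\mathcal{T}(r))$ trivially. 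This gives $\mathcal{T}(r)\le o(\mathcal{T}(r))$ for arbitrarily large $r$ outside a set of finite linear measure, which is the contradiction you want. Your normalization also works once you note that $\max_{h<k}T(r,{\rm e}^{g_h-g_k})\le(2+o(1))\,\mathcal{T}(r)$, since ${\rm e}^{g_h-g_k}={\rm e}^{g_h-g_n}/{\rm e}^{g_k-g_n}$.
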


Lemmas \ref{le2.9} and \ref{le2.10} below are essential to our estimates of the
integrated counting functions of the solutions of \eqref{eq1.6} and \eqref{eq1.7}.
Throughout, $E$ denotes a set of finite linear measure, not
necessarily the same at each occurrence. For a nonconstant meromorphic function
$f$ and a positive integer $p$, we write $n_{p}\Big(r,\tfrac{1}{f}\Big)$ for
the number of zeros of $f$ in $\{|z|\leq r\}$, where a zero of multiplicity $q$
is counted $\min\{q,p\}$ times, and $N_{p}\Big(r,\tfrac{1}{f}\Big)$ for the
corresponding integrated counting function.

\begin{lemma}[{See \cite[Theorem~2.1]{g},\cite{c}}]\label{le2.9}
Let $f_{1},f_{2},\ldots,f_{p}$ be linearly independent entire functions, where $p\geq 2$.
Suppose that for each complex number $z$, $\max\{|f_{1}(z)|,\ldots,|f_{p}(z)|\}>0$.
For positive $r$, set
\begin{equation*}
T(r)=\frac{1}{2\pi}\int_{0}^{2\pi}u(re^{i\theta})d\theta-u(0),\quad
\text{where}\quad u(z)=\sup\limits_{1\leq j\leq p}\log|f_{j}(z)|.
\end{equation*}
Set $f_{p+1}=f_{1}+\cdots+f_{p}$. Then
\begin{equation*}
T(r)\leq \sum_{j=1}^{p+1}N_{p-1}\Big(r,\frac{1}{f_{j}}\Big)+S(r)\leq
(p-1)\sum_{j=1}^{p+1}\overline{N}\Big(r,\frac{1}{f_{j}}\Big)+S(r),
\end{equation*}
where $S(r)$ is a quantity satisfying $S(r)=O(\log(T(r)))+O(\log r)$
($r\to \infty,~r\not\in E$). If at least one of the quotients $f_{j}/f_{m}$
is a transcendental function, then $S(r)=o(T(r))$, ($r\to \infty,~r\not\in E$),
while if all the quotients $f_{j}/f_{m}$ are rational functions, then
$S(r)=-\frac{1}{2}p(p-1)\log r+O(1)$, ($r\to \infty,~r\not\in E$).
\end{lemma}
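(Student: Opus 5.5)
The plan follows Cartan's classical Wronskian argument, as presented by Gundersen and Hayman. Let $W=W(f_{1},\ldots,f_{p})$ be the Wronskian; $W\not\equiv 0$ because the $f_{j}$ are linearly independent. Since $f_{p+1}=f_{1}+\cdots+f_{p}$, any $p$ of the $p+1$ functions $f_{1},\ldots,f_{p+1}$ are linearly independent. By column operations, the Wronskian of any $p$-subset equals $\pm W$. For each $k\in\{1,\ldots,p+1\}$ write
\begin{equation*}
W=\pm\Big(\prod_{j\neq k}f_{j}\Big)L_{k},
\end{equation*}
where $L_{k}$ is the determinant whose entries are the quotients $f_{j}^{(i)}/f_{j}$, for $j\neq k$ and $0\leq i\leq p-1$. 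This gives the pointwise identity
\begin{equation*}
\sum_{j=1}^{p+1}\log|f_{j}(z)|-\log|W(z)|=\log|f_{k}(z)|-\log|L_{k}(z)|,
\end{equation*}
valid for every $k$.

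The key step is to choose $k=k(z)$ so that $|f_{k}(z)|$ is as large as possible among $|f_{1}(z)|,\ldots,|f_{p+1}(z)|$. This choice is legitimate precisely because every $p$-subset has Wronskian $\pm W$. It gives $\log|f_{k}(z)|\geq u(z)$, hence
\begin{equation*}
u(z)\leq\sum_{j=1}^{p+1}\log|f_{j}(z)|-\log|W(z)|+\sum_{k=1}^{p+1}\log^{+}|L_{k}(z)|.
\end{equation*}
Integrate over $|z|=r$ and apply Jensen's formula to the entire functions $f_{j}$ and $W$. This yields
\begin{equation*}
T(r)\leq\sum_{j=1}^{p+1}N\Big(r,\frac{1}{f_{j}}\Big)-N\Big(r,\frac{1}{W}\Big)+\sum_{k}m(r,L_{k})+O(1).
\end{equation*}

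Next comes the truncation. Fix a point $z_{0}$. At most $p-1$ of the $f_{j}$ vanish at $z_{0}$: if $p$ of them did, then all would, since any $p$ of them span the rest, contradicting $\max_{j}|f_{j}(z_{0})|>0$. Choose a $p$-subset containing all indices $j$ with $f_{j}(z_{0})=0$, and expand $W$ using this subset. Differentiating a function with a zero of order $m_{j}$ at most $p-1$ times leaves a zero of order at least $m_{j}-p+1$. Hence $W$ vanishes at $z_{0}$ to order at least $\sum_{j}(m_{j}-p+1)^{+}$. Therefore
\begin{equation*}
\sum_{j}N\Big(r,\frac{1}{f_{j}}\Big)-N\Big(r,\frac{1}{W}\Big)\leq\sum_{j}N_{p-1}\Big(r,\frac{1}{f_{j}}\Big).
\end{equation*}
The second inequality of the lemma follows trivially from $N_{p-1}\leq(p-1)\overline{N}$.

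The main obstacle is the error term $\sum_{k}m(r,L_{k})$. The entries $f_{j}^{(i)}/f_{j}$ are logarithmic derivatives of the entire functions $f_{j}$, but $T(r,f_{j})$ is not directly comparable to $T(r)$. I would first normalize: divide each column and row suitably by a fixed $f_{m}$, so that $L_{k}$ becomes a polynomial in the logarithmic derivatives $g^{(i)}/g$ of the quotients $g=f_{j}/f_{m}$. The quotient's logarithmic derivative satisfies $(f_{j}/f_{m})'/(f_{j}/f_{m})=f_{j}'/f_{j}-f_{m}'/f_{m}$, and higher derivatives are handled by the standard recursion.

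For each quotient one has $T(r,f_{j}/f_{m})\leq T(r)+O(1)$, since $\log|f_{j}/f_{m}|\leq u-\log|f_{m}|$ and Jensen applies. The logarithmic derivative lemma then gives $m(r,L_{k})=O(\log T(r))+O(\log r)$ for $r\notin E$. If some quotient is transcendental, then $\log r=o(T(r))$ and $S(r)=o(T(r))$.

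If all quotients are rational, then after dividing out a common entire factor all $f_{j}$ are polynomials, and the estimate should be sharpened. Use $\log|L_{k}|\leq-\tfrac{1}{2}p(p-1)\log r+O(1)$ for large $|z|$: the $i$-th derivative of a logarithmic derivative of a rational function is $O(|z|^{-i})$, and $0+1+\cdots+(p-1)=\tfrac{1}{2}p(p-1)$. This degree count yields $S(r)=-\tfrac{1}{2}p(p-1)\log r+O(1)$.
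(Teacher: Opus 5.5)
Your proposal is correct. It is the standard Cartan Wronskian argument that underlies the cited result: the choice of the largest $|f_{k}(z)|$ made possible by $W(f_{j}:j\neq k)=\pm W$, Jensen's formula, truncation via the order of vanishing of $W$ on a $p$-subset containing all vanishing indices, normalization of $L_{k}$ to the quotients $f_{j}/f_{m}$ with $T(r,f_{j}/f_{m})\leq T(r)+O(1)$, and the count $0+1+\cdots+(p-1)$ in the rational case. The paper gives no proof of its own and only quotes the lemma from Gundersen--Hayman and Cartan; the one point to state explicitly is that in the rational case you must keep $\log|L_{k(z)}(z)|$ in the pointwise inequality rather than passing to $\sum_{k}\log^{+}|L_{k}|$, since $\log^{+}\geq 0$ would discard the negative term $-\frac{1}{2}p(p-1)\log r$.
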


\begin{lemma}[{See \cite[Lemma~2.2]{g},\cite{c}}]\label{le2.10}
Assume that the hypotheses of Lemma \ref{le2.9} hold. Then for any $j$ and $m$,
\begin{equation*}
T(r,f_{j}/f_{m})\leq T(r)+O(1) \quad (r\to \infty),
\end{equation*}
and for any $j$,
\begin{equation*}
N(r,1/f_{j})\leq T(r)+O(1) \quad (r\to \infty).
\end{equation*}
\end{lemma}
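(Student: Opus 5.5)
The plan is to prove both inequalities directly from Jensen's formula, using only the pointwise bound $\log|f_{k}(z)|\leq u(z)$ for every $k$ and the definition of $T(r)$ in Lemma \ref{le2.9}. Note that $T(r)=\frac{1}{2\pi}\int_{0}^{2\pi}u(re^{i\theta})\,d\theta+O(1)$, since $u(0)$ is a finite constant. It is finite because $\max_{k}|f_{k}(0)|>0$ by hypothesis.

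First I prove the second inequality. Fix $j$. Since the $f_{k}$ are linearly independent, $f_{j}\not\equiv 0$. Jensen's formula, in the form that also covers a zero of order $s$ at the origin with leading Taylor coefficient $a_{j}\neq 0$, gives
\begin{equation*}
N\Big(r,\frac{1}{f_{j}}\Big)=\frac{1}{2\pi}\int_{0}^{2\pi}\log|f_{j}(re^{i\theta})|\,d\theta-\log|a_{j}|.
\end{equation*}
Here $N$ carries the usual $n(0,1/f_{j})\log r$ convention. Bounding $\log|f_{j}|\leq u$ under the integral gives $N(r,1/f_{j})\leq T(r)+u(0)-\log|a_{j}|=T(r)+O(1)$.

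Next the quotient. Let $h=f_{j}/f_{m}$ with $j\neq m$; the case $j=m$ is trivial. Pointwise,
\begin{equation*}
\log^{+}|h|=\log\max\{|f_{j}|,|f_{m}|\}-\log|f_{m}|.
\end{equation*}
Integrating over $|z|=r$ and applying Jensen to $f_{m}$ expresses $m(r,h)$ as
\begin{equation*}
m(r,h)=\frac{1}{2\pi}\int_{0}^{2\pi}\log\max\{|f_{j}|,|f_{m}|\}\,d\theta-N\Big(r,\frac{1}{f_{m}}\Big)+O(1).
\end{equation*}
The poles of $h$ are the zeros of $f_{m}$, reduced by the common zeros of $f_{j}$ and $f_{m}$, each counted with the smaller multiplicity. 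Hence
\begin{equation*}
N(r,h)=N\Big(r,\frac{1}{f_{m}}\Big)-N_{0}(r)\leq N\Big(r,\frac{1}{f_{m}}\Big),
\end{equation*}
where $N_{0}(r)\geq 0$ is the counting function of those common zeros. Adding the two displays, the $N(r,1/f_{m})$ terms cancel, and $\max\{|f_{j}|,|f_{m}|\}\leq e^{u}$ yields $T(r,h)\leq T(r)+O(1)$.

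The only delicate point is bookkeeping at the origin. If $f_{m}(0)=0$, or if the origin is a zero or pole of $h$, Jensen's formula must be used with leading Taylor/Laurent coefficients. One must then check that the resulting $\log r$ terms in $N(r,h)$ and in $N(r,1/f_{m})$ cancel in the same way as the counting functions do. They do, since the order of $h$ at $0$ equals $\operatorname{ord}_{0}f_{j}-\operatorname{ord}_{0}f_{m}$. Alternatively, one can replace $T(r,h)$ by the Ahlfors–Shimizu characteristic, or shift to a point where no $f_{k}$ vanishes; either changes things by $O(1)$. Everything else is routine.
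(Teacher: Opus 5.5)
Your proof is correct. The paper itself gives no proof of this lemma; it simply quotes it from the literature, so there is no internal argument to compare against. Your Jensen-formula computation is the standard proof. In effect it is Cartan's identity
\[
T\Big(r,\frac{f_j}{f_m}\Big)=\frac{1}{2\pi}\int_0^{2\pi}\log\max\{|f_j(re^{i\theta})|,|f_m(re^{i\theta})|\}\,d\theta-N_0(r)+O(1),
\]
combined with $\max\{|f_j|,|f_m|\}\le e^{u}$. Your handling of the origin through the leading Taylor coefficients in Jensen's formula is correct.

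There are two small points to fix.

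First, this paper applies the second inequality with $j=p+1$. In \eqref{eq4.5} and \eqref{eq5.4}, the function $f^{n}f'$ is exactly $f_{p+1}$ for the triple $p_1{\rm e}^{\alpha_1 z}$, $p_2{\rm e}^{\alpha_2 z}$, $-q{\rm e}^{Q}f(z+c)$. Your argument covers this case at no extra cost:
\begin{itemize}
\item $f_{p+1}\not\equiv 0$ by linear independence;
\item $\log|f_{p+1}|\le u+\log p$, so the bound $\log|f_j|\le u$ is replaced by one with an extra $O(1)$.
\end{itemize}
The same remark applies to the quotient estimate when $j$ or $m$ equals $p+1$. You should state this explicitly.

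Second, drop the suggested alternative of moving the origin to a point where no $f_k$ vanishes. A translation does not change characteristics by $O(1)$ in general. For example, with $h={\rm e}^{{\rm e}^{z}}$ one has $T(r,h(z+1))={\rm e}\,T(r,h)$. The leading-coefficient form of Jensen's formula that you actually use is the right way to treat zeros at the origin, and the Ahlfors--Shimizu characteristic would also work.
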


\section{ Proof of Theorem \ref{th1.1}}
\begin{proof}
Suppose that $f$ is a finite-order meromorphic solution of \eqref{eq1.4}. We claim
that $f$ is an entire function. Otherwise, $f$ has a pole $z_1$ of
multiplicity $k_1(\geq 1)$. By \eqref{eq1.4} and the assumptions of Theorem \ref{th1.1},
$z_2:=z_1+c$ is a pole of $f(z)$ of multiplicity $k_2>(n+1)k_1$.
Iterating this argument, we obtain a sequence of poles $\{z_{j}=z_{1}+(j-1)c\}$ of $f(z)$ with multiplicity
$k_j>(n+1)^{j-1}k_{1}$ for $j=2,3,\ldots$, which implies $\lambda(\frac{1}{f})=+\infty$.
This contradicts $\sigma(f)<+\infty$. Hence $f$ is entire.
Since $p_{1}{\rm e}^{\alpha_{1}z}+p_{2}{\rm e}^{\alpha_{2}z}$
is transcendental, $f$ cannot be a polynomial and is therefore transcendental.
Note that $N(r,p_{1}{\rm e}^{\alpha_{1}z}+p_{2}{\rm e}^{\alpha_{2}z})=0$ and
$\sigma(p_{1}{\rm e}^{\alpha_{1}z}+p_{2}{\rm e}^{\alpha_{2}z})=1$, it then follows from Lemma \ref{le2.5}
that $\sigma(f)=1$.

Denoting $F(z):=f^{n}(z)f'(z)$, $P(z):=q(z)f(z+c)$ and differentiating \eqref{eq1.4}, we have
\begin{align}\label{eq3.1}
F'+P'=\alpha_{1}p_{1}{\rm e}^{\alpha_{1}z}+\alpha_{2}p_{2}{\rm e}^{\alpha_{2}z}.
\end{align}
Eliminating ${\rm e}^{\alpha_{1}z}$ from \eqref{eq1.4} and \eqref{eq3.1} yields
\begin{align}\label{eq3.2}
(\alpha_{1}F-F')+(\alpha_{1}P-P')=p_{2}(\alpha_{1}-\alpha_{2}){\rm e}^{\alpha_{2}z}.
\end{align}
Differentiating \eqref{eq3.2}, we have
\begin{align}\label{eq3.3}
(\alpha_{1}F'-F'')+(\alpha_{1}P'-P'')=p_{2}(\alpha_{1}-\alpha_{2})\alpha_{2}{\rm e}^{\alpha_{2}z}.
\end{align}
Eliminating ${\rm e}^{\alpha_{2}z}$ from \eqref{eq3.2} and \eqref{eq3.3} yields
\begin{align}\label{eq3.4}
\alpha_{1}\alpha_{2}F-(\alpha_{1}+\alpha_{2})F'+F''=
-[\alpha_{1}\alpha_{2}P-(\alpha_{1}+\alpha_{2})P'+P''].
\end{align}
We now consider two cases:\par
{\bf Case 1:}\ \ Suppose that $\alpha_{1}\alpha_{2}F-(\alpha_{1}+\alpha_{2})F'+F''\equiv 0$. By
solving a second-order differential equation, we get that
\begin{align}\label{eq3.5}
f^{n}(z)f'(z)=d_{1}{\rm e}^{\alpha_{1}z}+d_{2}{\rm e}^{\alpha_{2}z},
\end{align}
where $d_{1}$ and $d_{1}$ are constants. By Lemma \ref{le2.2}, we have
$f^{n}(z)f'(z)=d_{1}{\rm e}^{\alpha_{1}z}$ or $f^{n}(z)f'(z)=d_{2}{\rm e}^{\alpha_{2}z}$.
If $f^{n}(z)f'(z)=d_{1}{\rm e}^{\alpha_{1}z}$, then $f(z)=\gamma{\rm e}^{\beta z}$.
Substituting this expression into \eqref{eq1.4}, we get $\gamma^{n+1}\beta=p_{1}$,
$q(z)\gamma{\rm e}^{\beta c}=p_{2}$, $(n+1)\beta=\alpha_{1}$, $\beta=\alpha_{2}$,
which implies that $q(z)$ reduces to a constant $q$.
Similarly, if $f^{n}(z)f'(z)=d_{2}{\rm e}^{\alpha_{2}z}$, we obtain $\gamma^{n+1}\beta=p_{2}$,
$q\gamma{\rm e}^{\beta c}=p_{1}$, $(n+1)\beta=\alpha_{2}$, $\beta=\alpha_{1}$.

{\bf Case 2:}\ \
Suppose that $\alpha_{1}\alpha_{2}F-(\alpha_{1}+\alpha_{2})F'+F''\not\equiv 0$.
Substituting $F=f^{n}f'$, $F'=nf^{n-1}(f')^{2}+f^{n}f''$,
and $F''=n(n-1)f^{n-2}(f')^{3}+3nf^{n-1}f'f''+f^{n}f'''$ into
\eqref{eq1.4} yields
\begin{align}\label{eq3.6}
f^{n-2}\varphi=-[\alpha_{1}\alpha_{2}P-(\alpha_{1}+\alpha_{2})P'+P''],
\end{align}
where
\begin{align}\label{eq3.7}
\varphi=\alpha_{1}\alpha_{2}f^{2}f'-
(\alpha_{1}+\alpha_{2})[f(f')^{2}+f^{2}f'']+n(n-1)(f')^{3}+
3nff'f''+f^{2}f'''.
\end{align}
Note that $f$ is entire and $\sigma(f)=1$, by \eqref{eq3.6} and the same proof of
Lemma \ref{le2.6}, we have
\begin{align}\label{eq3.8}
T(r,\varphi)=m(r,\varphi)=O(\log r).
\end{align}
It follows from \eqref{eq3.7} that any multiple zero of $f$
must also be a zero of $\varphi$. Then
$N_{(2}(r,\frac{1}{f})\leq N(r,\frac{1}{\varphi})\leq T(r,\varphi)+O(1)=O(\log r)$.
Consequently,
\begin{align}\label{eq3.9}
T(r,f)=N_{1)}\Big(r,\frac{1}{f}\Big)+O(\log r).
\end{align}
Differentiating \eqref{eq3.7} results in
\begin{align}\label{eq3.10}
\begin{split}
& \varphi'=2\alpha_{1}\alpha_{2}f(f')^{2}+\alpha_{1}\alpha_{2}f^{2}f''
-(\alpha_{1}+\alpha_{2})[(f')^{3}+4ff'f''+f^{2}f''']+3n^{2}(f')^{2}f''\\
& +3nf(f'')^{2}+(3n+2)ff'f'''+f^{2}f^{(4)}.
\end{split}
\end{align}
Suppose that $z_0$ is a simple zero of $f$. By \eqref{eq3.7} and \eqref{eq3.10},
we see that $z_0$ is a zero of $[n(n-1)\varphi'+(\alpha_{1}+\alpha_{2})\varphi]f'-3n^{2}\varphi f''$.
Denote
\begin{align}\label{eq3.11}
\tau(z)=\frac{[n(n-1)\varphi'+(\alpha_{1}+\alpha_{2})\varphi]f'-3n^{2}\varphi f''}{f}.
\end{align}
Then $T(r,\tau)=O(\log r)$, hence $\tau(z)$ is a rational function.
It follows from \eqref{eq3.11} that
\begin{align}\label{eq3.12}
f''=A_1f'+B_1f,
\end{align}
where $A_1=\frac{(n-1)\varphi'}{3n\varphi}+\frac{\alpha_{1}+\alpha_{2}}{3n^{2}}$,
$B_1=-\frac{\tau}{3n^{2}\varphi}$ are rational functions.
It follows from \eqref{eq3.12} that
\begin{align}\label{eq3.13}
f'''=(A_1^{2}+A'_1+B_1)f'+(A_1B_1+B'_1)f,
\end{align}
\begin{align}\label{eq3.14}
f^{(4)}=(A_1^{3}+3A_1A'_1+2A_1B_1+A''_1+2B'_1)f'+(A_1^{2}B_1+2A'_1B_1+A_1B'_1+B_1^{2}+B''_1)f.
\end{align}
Substituting \eqref{eq3.12} and \eqref{eq3.13} into \eqref{eq3.7} yields
\begin{align}\label{eq3.15}
\varphi=s_1(f')^{3}+s_2f(f')^{2}+s_3f^{2}f'+s_4f^{3},
\end{align}
where
\begin{align*}
\left\{\begin{array}{l}
s_1=n(n-1),\\
s_2=3nA_1-(\alpha_{1}+\alpha_{2}),\\
s_3=A_1^{2}+A'_1+(3n+1)B_1-(\alpha_{1}+\alpha_{2})A_1+\alpha_{1}\alpha_{2},\\
s_4=A_1B_1+B'_1-(\alpha_{1}+\alpha_{2})B_1,
\end{array}\right.
\end{align*}
are rational functions.
Substituting \eqref{eq3.12}-\eqref{eq3.14} into \eqref{eq3.10} yields
\begin{align}\label{eq3.16}
\varphi'=t_1(f')^{3}+t_2f(f')^{2}+t_3f^{2}f'+t_4f^{3},
\end{align}
where
\begin{align*}
\left\{\begin{array}{l}
t_1=3n^{2}A_1-(\alpha_{1}+\alpha_{2}),\\
t_2=2(3n+1)A_1^{2}+(3n+2)A'_1+(3n^{2}+3n+2)B_1-4(\alpha_{1}+\alpha_{2})A_1+2\alpha_{1}\alpha_{2},\\
t_3=A_1^{3}+3A_1A'_1+(9n+4)A_1B_1+A''_1+\alpha_{1}\alpha_{2}A_1+(3n+4)B'_1\\
-(\alpha_{1}+\alpha_{2})(A_1^{2}+A'_1+5B_1),\\
t_4=A_1^{2}B_1+2A'_1B_1+A_1B'_1+(3n+1)B_1^{2}+\alpha_{1}\alpha_{2}B_1+B''_1-(\alpha_{1}+\alpha_{2})(A_1B_1+B'_1),
\end{array}\right.
\end{align*}
are rational functions.
Based on the expressions of $s_{1},~t_{1}$ and $A_{1}$, it follows that
$t_1-s_1\frac{\varphi'}{\varphi}=3n^{2}A_1-(\alpha_{1}+\alpha_{2})-n(n-1)\frac{\varphi'}{\varphi}\equiv 0$.
Combining \eqref{eq3.15} and \eqref{eq3.16}, we have
\begin{align}\label{eq5.19}
\Big(t_2-s_2\frac{\varphi'}{\varphi}\Big)f(f')^{2}+
\Big(t_3-s_3\frac{\varphi'}{\varphi}\Big)f^{2}f'+
\Big(t_4-s_4\frac{\varphi'}{\varphi}\Big)f^{3}=0.
\end{align}
It follows from \eqref{eq3.9} that
\begin{align*}
& t_2-s_2\frac{\varphi'}{\varphi}
=2(3n+1)A_1^{2}+(3n+2)A'_1+(3n^{2}+3n+2)B_1-4(\alpha_{1}+\alpha_{2})A_1+2\alpha_{1}\alpha_{2}\\
& -[3nA_1-(\alpha_{1}+\alpha_{2})]\frac{\varphi'}{\varphi}\equiv 0.\\
& t_3-s_3\frac{\varphi'}{\varphi}
=A_1^{3}+3A_1A'_1+(9n+4)A_1B_1+A''_1+\alpha_{1}\alpha_{2}A_1+(3n+4)B'_1\\
& -(\alpha_{1}+\alpha_{2})(A_1^{2}+A'_1+5B_1)-
[A_1^{2}+A'_1+(3n+1)B_1-(\alpha_{1}+\alpha_{2})A_1+\alpha_{1}\alpha_{2}]\frac{\varphi'}{\varphi}\equiv 0.\\
& t_4-s_4\frac{\varphi'}{\varphi}
=A_1^{2}B_1+2A'_1B_1+A_1B'_1+(3n+1)B_1^{2}+\alpha_{1}\alpha_{2}B_1+B''_1-(\alpha_{1}+\alpha_{2})(A_1B_1+B'_1)\\
& -[A_1B_1+B'_1-(\alpha_{1}+\alpha_{2})B_1]\frac{\varphi'}{\varphi}\equiv 0.
\end{align*}
It follows from $t_2-s_2\frac{\varphi'}{\varphi}\equiv 0$ that
\begin{align*}
& 2(3n+1)A_1^{2}+(3n+2)A'_1+(3n^{2}+3n+2)B_1-4(\alpha_{1}+\alpha_{2})A_1+2\alpha_{1}\alpha_{2}\\
& =[3nA_1-(\alpha_{1}+\alpha_{2})]\frac{\varphi'}{\varphi}.
\end{align*}
As $z\to \infty$, we have $\frac{\varphi'}{\varphi}\to 0$, and hence
$A_1=\frac{(n-1)\varphi'}{3n\varphi}+\frac{\alpha_{1}+\alpha_{2}}{3n^{2}}\to \frac{\alpha_{1}+\alpha_{2}}{3n^{2}}$,
$A'_1\to 0$. Recalling that $B_1=-\frac{\tau}{3n^{2}\varphi}$, we obtain from the above limits that
\begin{align}\label{eq3.18}
2(3n+1)\Big(\frac{\alpha_{1}+\alpha_{2}}{3n^{2}}\Big)^{2}+(3n^{2}+3n+2)\Big(-\frac{\tau}{3n^{2}\varphi}\Big)
-\frac{4(\alpha_{1}+\alpha_{2})^{2}}{3n^{2}}+2\alpha_{1}\alpha_{2}\to 0.
\end{align}
It follows from $t_4-s_4\frac{\varphi'}{\varphi}\equiv 0$ that
\begin{align*}
& A_1^{2}B_1+2A'_1B_1+A_1B'_1+(3n+1)B_1^{2}+\alpha_{1}\alpha_{2}B_1+B''_1-(\alpha_{1}+\alpha_{2})(A_1B_1+B'_1)\\
& =[A_1B_1+B'_1-(\alpha_{1}+\alpha_{2})B_1]\frac{\varphi'}{\varphi}.
\end{align*}
As $z\to \infty$, we get $\frac{\varphi'}{\varphi}\to 0$, and hence
$A_1\to \frac{\alpha_{1}+\alpha_{2}}{3n^{2}}$, $A'_1\to 0$,
$\frac{B_1'}{B_1}\to 0$, $\frac{B''_1}{B_1}\to 0$. Recalling that $B_1=-\frac{\tau}{3n^{2}\varphi}$,
we get from the above limits that
\begin{align}\label{eq3.19}
\Big(\frac{\alpha_{1}+\alpha_{2}}{3n^{2}}\Big)^{2}+(3n+1)\Big(-\frac{\tau}{3n^{2}\varphi}\Big)
-\frac{(\alpha_{1}+\alpha_{2})^{2}}{3n^{2}}+\alpha_{1}\alpha_{2}\to 0.
\end{align}
It follows from \eqref{eq3.18} and \eqref{eq3.19} that
\begin{align}\label{eq3.20}
\lim_{z\to \infty}\frac{\tau}{\varphi}=\frac{3n^{2}}{3n^{2}+3n+2}
\Big[2(3n+1)\Big(\frac{\alpha_{1}+\alpha_{2}}{3n^{2}}\Big)^{2}
-\frac{4(\alpha_{1}+\alpha_{2})^{2}}{3n^{2}}+2\alpha_{1}\alpha_{2}\Big],
\end{align}
and
\begin{align}\label{eq3.21}
\lim_{z\to \infty}\frac{\tau}{\varphi}=\frac{3n^{2}}{3n+1}
\Big[\Big(\frac{\alpha_{1}+\alpha_{2}}{3n^{2}}\Big)^{2}
-\frac{(\alpha_{1}+\alpha_{2})^{2}}{3n^{2}}+\alpha_{1}\alpha_{2}\Big].
\end{align}
Combining \eqref{eq3.20} and \eqref{eq3.21}, we obtain
$\frac{\alpha_{1}}{\alpha_{2}}+\frac{\alpha_{2}}{\alpha_{1}}=3n^{2}+6n+13+
\frac{3(12n+5)}{n^{2}-2n-1}>n+1+\frac{1}{n+1}$ $(n\geq 3)$,
which contradicts the hypothesis
$\Big|\frac{\alpha_{1}}{\alpha_{2}}+\frac{\alpha_{2}}{\alpha_{1}}\Big|\leq n+1+\frac{1}{n+1}$.
This completes the proof.
\par
\end{proof}

\section{ Proof of Theorem \ref{th1.2}}
\begin{proof}
By Lemma \ref{le2.7}, we see that $f(z)$ is a transcendental entire function, and $\sigma(f)=\deg Q(z)=1$.
We now distinguish two cases.\par
{\Case 1} Suppose that $p_{1}{\rm e}^{\alpha_{1}z}$, $p_{2}{\rm e}^{\alpha_{2}z}$, $-q(z){\rm e}^{Q(z)}f(z+c)$
are linearly dependent. Then there exist constants $d_{1}$ and $d_{2}$, not both zero, such that
\begin{equation}\label{eq4.1}
q(z){\rm e}^{Q(z)}f(z+c)=d_{1}p_{1}{\rm e}^{\alpha_{1}z}+d_{2}p_{2}{\rm e}^{\alpha_{2}z},
\end{equation}
By \eqref{eq1.6} and Lemma \ref{le2.2}, we get
\begin{equation}\label{eq4.2}
f^{n}(z)f'(z)=(1-d_{1})p_{1}{\rm e}^{\alpha_{1}z}\quad \text{or} \quad
f^{n}(z)f'(z)=(1-d_{2})p_{2}{\rm e}^{\alpha_{2}z},
\end{equation}
where $d_{1}(\neq 1)$ and $d_{2}(\neq 1)$ are constants. It follows from \eqref{eq4.2} that
\begin{equation}\label{eq4.3}
f(z)=\gamma{\rm e}^{\frac{\alpha}{n+1}z},
\end{equation}
where $(\alpha,\gamma^{n+1}\frac{\alpha}{n+1})\in\{(\alpha_1,(1-d_1)p_1),(\alpha_2,(1-d_2)p_2)\}$.
Substituting \eqref{eq4.3} into \eqref{eq1.6} yields
\begin{equation}\label{eq4.4}
\gamma^{n+1}\frac{\alpha}{n+1}{\rm e}^{\alpha z}
+\gamma{\rm e}^{\frac{\alpha c}{n+1}}q(z){\rm e}^{\frac{\alpha z+(n+1)Q(z)}{n+1}}
=p_{1}{\rm e}^{\alpha_{1}z}+p_{2}{\rm e}^{\alpha_{2}z}.
\end{equation}
By \eqref{eq4.4} and Lemma \ref{le2.8}, we get
\begin{equation*}
Q(z)=\frac{(n+1)\alpha_{2}-\alpha_{1}}{n+1}z+Q(0),~\gamma{\rm e}^{\frac{(n+1)Q(0)+\alpha_{1}c}{n+1}}q(z)=p_{2},
~\gamma^{n+1}\frac{\alpha_{1}}{n+1}=p_{1},
\end{equation*}
or
\begin{equation*}
Q(z)=\frac{(n+1)\alpha_{1}-\alpha_{2}}{n+1}z+Q(0),~\gamma{\rm e}^{\frac{(n+1)Q(0)+\alpha_{2}c}{n+1}}q(z)=p_{1},
~\gamma^{n+1}\frac{\alpha_{2}}{n+1}=p_{2}.
\end{equation*}
The above equalities imply that $q(z)$ reduces to a constant
and that $(n+1)p_{1}q^{n+1}{\rm e}^{(n+1)Q(0)+\alpha_{1}c}=\alpha_{1}p_{2}^{n+1}$,
respectively $(n+1)p_{2}q^{n+1}{\rm e}^{(n+1)Q(0)+\alpha_{2}c}=\alpha_{2}p_{1}^{n+1}$.
Hence the conclusion of Theorem \ref{th1.2} holds.\par

{\Case 2} Suppose that $p_{1}{\rm e}^{\alpha_{1}z}$, $p_{2}{\rm e}^{\alpha_{2}z}$, $-q(z){\rm e}^{Q(z)}f(z+c)$
are linearly independent. Since $f$ is entire, by \eqref{eq1.6}, Lemmas \ref{le2.9} and \ref{le2.10},
remark \ref{re2.1}, for sufficiently large $r\not\in E$, we obtain
\begin{equation}\label{eq4.5}
\begin{split}
nN\Big(r,\frac{1}{f}\Big) &=N\Big(r,\frac{1}{f^{n}}\Big)-N(r,f')
\leq N\Big(r,\frac{1}{f^{n}f'}\Big)\leq T_{1}(r)\\
& \leq N_{2}\Big(r,\frac{1}{p_{1}{\rm e}^{\alpha_{1}z}}\Big)+
N_{2}\Big(r,\frac{1}{p_{2}{\rm e}^{\alpha_{2}z}}\Big)+
N_{2}\Big(r,\frac{1}{q{\rm e}^{Q}f(z+c)}\Big)\\
& +N_{2}\Big(r,\frac{1}{f^{n}f'}\Big)+o( T_{1}(r))\\
& \leq N\Big(r,\frac{1}{f(z+c)}\Big)+N_{2}\Big(r,\frac{1}{f^{n}f'}\Big)+o( T_{1}(r))+O(\log r)\\
& \leq 3N\Big(r,\frac{1}{f}\Big)+o( T_{1}(r))+S(r,f),
\end{split}
\end{equation}
where
\begin{equation*}
\begin{split}
T_{1}(r)=\frac{1}{2\pi}\int_{0}^{2\pi}u_{1}(r{\rm e}^{i\theta})d\theta-u_{1}(0),\quad
u_{1}(z)=\sup\limits_{1\leq j\leq 2}\{\log|p_{j}{\rm e}^{\alpha_{j}z}|,\log|q(z){\rm e}^{Q(z)}f(z+c)|\}.
\end{split}
\end{equation*}
Since $n\geq 4$, it follows from \eqref{eq4.5} that
\begin{equation}\label{eq4.6}
N\left(r,\frac{1}{f}\right)=S(r,f).
\end{equation}
Differentiating \eqref{eq1.6} and eliminating ${\rm e}^{\alpha_{1}z}$ yields
\begin{equation}\label{eq4.7}
nf^{n-1}(f')^{2}+f^{n}f''-\alpha_{1}f^{n}f'+\zeta(z)q(z){\rm e}^{Q(z)}f(z+c)
=(\alpha_{2}-\alpha_{1}){\rm e}^{\alpha_{2}z},
\end{equation}
where $\zeta(z)=\frac{q'(z)}{q(z)}+\frac{f'(z+c)}{f(z+c)}+Q'(z)-\alpha_{1}\not\equiv 0$.
Otherwise, $\zeta(z)\equiv 0$, then $q(z){\rm e}^{Q(z)}f(z+c)=d_{3}{\rm e}^{\alpha_{1}z}$,
$d_{3}\in\mathbb{C}\setminus\{0\}$, which contradicts the fact that
$p_{1}{\rm e}^{\alpha_{1}z}$, $p_{2}{\rm e}^{\alpha_{2}z}$,
$-q(z){\rm e}^{Q(z)}f(z+c)$ are linearly independent.
Differentiating \eqref{eq4.7} and eliminating ${\rm e}^{\alpha_{2}z}$ yields
\begin{equation}\label{eq4.8}
\begin{split}
& n(n-1)f^{n-2}(f')^{3}+3nf^{n-1}f'f''+f^{n}f'''-n(\alpha_{1}+\alpha_{2})f^{n-1}(f')^{2}
-(\alpha_{1}+\alpha_{2})f^{n}f''\\
& +\alpha_{1}\alpha_{2}f^{n}f'
=\Big(\alpha_{2}-\alpha_{1}-\frac{\zeta'}{\zeta}-\zeta\Big)\zeta q{\rm e}^{Q(z)}f(z+c).
\end{split}
\end{equation}

If $\alpha_{2}-\alpha_{1}-\frac{\zeta'}{\zeta}-\zeta\equiv 0$, then by \eqref{eq4.8}, we get
\begin{equation*}
n(n-1)f^{n-2}(f')^{3}+3nf^{n-1}f'f''+f^{n}f'''-n(\alpha_{1}+\alpha_{2})f^{n-1}(f')^{2}
-(\alpha_{1}+\alpha_{2})f^{n}f''+\alpha_{1}\alpha_{2}f^{n}f'\equiv 0,
\end{equation*}
that is,
\begin{equation}\label{eq4.9}
(f^{n}f')''-(\alpha_{1}+\alpha_{2})(f^{n}f')'+\alpha_{1}\alpha_{2}f^{n}f'\equiv 0.
\end{equation}
Solving \eqref{eq4.9}, one obtains
\begin{equation}\label{eq4.10}
f^{n}f'=d_{4}{\rm e}^{\alpha_{1}z}+d_{5}{\rm e}^{\alpha_{2}z},
\end{equation}
where $d_{4},~d_{5}$ are constants. By \eqref{eq4.10} and Lemma \ref{le2.2}, we get
$d_{4}\neq 0,~d_{5}=0$ or $d_{4}=0,~d_{5}\neq 0$. Then $f(z)$ has the form \eqref{eq4.3}.
By an argument similar to that used in Case 1, we obtain the conclusion of Theorem \ref{th1.2}.\par

If $\alpha_{2}-\alpha_{1}-\frac{\zeta'}{\zeta}-\zeta\not\equiv 0$,
then by \eqref{eq4.6}, Lemma \ref{le2.3}, remark \ref{re2.1} and the expression of $\zeta(z)$, we have
\begin{equation}\label{eq4.11}
T(r,\zeta)=N(r,\zeta)+S(r,f)\leq N\Big(r,\frac{1}{f(z+c)}\Big)+S(r,f)
\leq N\Big(r,\frac{1}{f}\Big)+S(r,f)=S(r,f).
\end{equation}
Combining \eqref{eq4.6}, \eqref{eq4.8} and \eqref{eq4.11} with Lemma \ref{le2.4}, we obtain
\begin{equation}\label{eq4.12}
\begin{split}
m\Big(r,\frac{{\rm e}^{Q(z)}}{f^{n}}\Big) & \leq m\Big(r,n(n-1)\Big(\frac{f'}{f}\Big)^{3}+3n\frac{f'}{f}\cdot\frac{f''}{f}
+\frac{f'''}{f}-n(\alpha_{1}+\alpha_{2})\Big(\frac{f'}{f}\Big)^{2}\\
& -(\alpha_{1}+\alpha_{2})\frac{f''}{f}+\alpha_{1}\alpha_{2}\frac{f'}{f}\Big)+m\Big(r,\frac{f(z)}{f(z+c)}\Big)+S(r,f)\\
& =S(r,f).
\end{split}
\end{equation}
Since $f$ is entire, combining \eqref{eq1.6}, \eqref{eq4.6} and \eqref{eq4.12}, we obtain
\begin{equation}\label{eq4.13}
\begin{split}
T\Big(r,\frac{p_{1}{\rm e}^{\alpha_{1}z}+p_{2}{\rm e}^{\alpha_{2}z}}{f^{n+1}}\Big) & =
T\Big(r,\frac{f'}{f}+\frac{q(z){\rm e}^{Q(z)}f(z+c)}{f^{n+1}}\Big)\\
& =m\Big(r,\frac{f'}{f}+\frac{q(z){\rm e}^{Q(z)}}{f^{n}}\cdot\frac{f(z+c)}{f(z)}\Big)
+N\Big(r,\frac{f'}{f}+\frac{q(z){\rm e}^{Q(z)}f(z+c)}{f^{n+1}}\Big)\\
& =S(r,f).
\end{split}
\end{equation}
Denote $\psi(z)=\frac{f^{n+1}(z)}{p_{1}{\rm e}^{\alpha_{1}z}+p_{2}{\rm e}^{\alpha_{2}z}}$, then
we have $f^{n+1}(z)=p_{1}\psi(z){\rm e}^{\alpha_{1}z}+p_{2}\psi(z){\rm e}^{\alpha_{2}z}$.
It follows from \eqref{eq4.13} that $T(r,p_{j}\psi(z))=S(r,f)~(j=1,2)$,
which contradicts Lemma \ref{le2.1}. This completes the proof.\par

\end{proof}

\section{ Proof of Theorem \ref{th1.3}}
\begin{proof}
By Lemma \ref{le2.7}, we see that $f(z)$ is a transcendental entire function, and $\sigma(f)=\deg Q(z)=1$.
Set $Q(z)=az+b$, where $a(\neq 0),~b$ are constants. Then, Eq.\eqref{eq1.7} can be rewritten as
\begin{equation}\label{eq5.1}
f^{3}(z)f'(z)+q(z){\rm e}^{az+b}f(z+c)=p_{1}{\rm e}^{\alpha_{1}z}+p_{2}{\rm e}^{\alpha_{2}z},
\end{equation}
In what follows, we consider two cases.\par
{\Case 1} Suppose that $p_{1}{\rm e}^{\alpha_{1}z}$, $p_{2}{\rm e}^{\alpha_{2}z}$, $-q{\rm e}^{az}f(z+c)$
are linearly dependent. Arguing as in Case 1 of the proof of Theorem \ref{th1.2},
we obtain from \eqref{eq5.1} and Lemma \ref{le2.2},
\begin{equation}\label{eq5.2}
f^{3}(z)f'(z)=(1-d_{1})p_{1}{\rm e}^{\alpha_{1}z},~d_{1}\neq 1,
\end{equation}
or
\begin{equation}\label{eq5.3}
f^{3}(z)f'(z)=(1-d_{2})p_{2}{\rm e}^{\alpha_{2}z},~d_{2}\neq 1,
\end{equation}
where $d_{1}$ and $d_{2}$ are constants. If \eqref{eq5.2} holds, then
$f(z)=\eta_{1}{\rm e}^{\frac{\alpha_{1}}{4}z}$, where $\eta_{1}^{4}\frac{\alpha_{1}}{4}=(1-d_{1})p_{1}$.
Substituting this into \eqref{eq5.1} yields
\begin{equation*}
\frac{\alpha_{1}}{4}\eta_{1}^{4}{\rm e}^{\alpha_{1}z}
+\eta_{1}{\rm e}^{b+\frac{\alpha_{1}}{4}c}q(z){\rm e}^{\frac{4a+\alpha_{1}}{4}z}
=p_{1}{\rm e}^{\alpha_{1}z}+p_{2}{\rm e}^{\alpha_{2}z},
\end{equation*}
then $\frac{\alpha_{1}}{4}\eta_{1}^{4}=p_{1}$, $\eta_{1}{\rm e}^{b+\frac{\alpha_{1}}{4}c}q(z)=p_{2}$
and $a=\frac{4\alpha_{2}-\alpha_{1}}{3}$. Furthermore, we find that $q(z)$ reduces to a constant and that
$4p_{1}q^{4}{\rm e}^{4b+\alpha_{1}c}=\alpha_{1}p_{2}^{4}$. Similarly, if \eqref{eq5.3} holds,
we obtain
\begin{equation*}
f(z)=\eta_{2}{\rm e}^{\frac{\alpha_{2}}{4}z} \quad \text{and}\quad
\frac{\alpha_{2}}{4}\eta_{2}^{4}=p_{2},~4p_{2}q^{4}{\rm e}^{4b+\alpha_{2}c}=\alpha_{2}p_{1}^{4},
~a=\frac{4\alpha_{1}-\alpha_{2}}{4}.
\end{equation*}
Then part $(1)$ of Theorem \ref{th1.3} holds.\par

{\Case 2} Suppose that $p_{1}{\rm e}^{\alpha_{1}z}$, $p_{2}{\rm e}^{\alpha_{2}z}$, $-q(z){\rm e}^{az+b}f(z+c)$
are linearly independent. Since $f$ is entire, by \eqref{eq5.1}, Lemmas \ref{le2.9} and \ref{le2.10},
remark \ref{re2.1}, for sufficiently large $r\not\in E$, we get
\begin{equation}\label{eq5.4}
\begin{split}
3N\Big(r,\frac{1}{f}\Big) & =N\Big(r,\frac{1}{f^{3}}\Big)-N(r,f')\leq N\Big(r,\frac{1}{f^{3}f'}\Big)\leq T_{1}(r)\\
& \leq N_{2}\Big(r,\frac{1}{p_{1}{\rm e}^{\alpha_{1}z}}\Big)+
N_{2}\Big(r,\frac{1}{p_{2}{\rm e}^{\alpha_{2}z}}\Big)+
N_{2}\Big(r,\frac{1}{q(z){\rm e}^{az+b}f(z+c)}\Big)\\
& +N_{2}\Big(r,\frac{1}{f^{3}f'}\Big)+o( T_{2}(r))\\
& \leq N\Big(r,\frac{1}{f(z+c)}\Big)+N_{2}\Big(r,\frac{1}{f^{3}f'}\Big)+o( T_{2}(r))+O(\log r)\\
& = N\Big(r,\frac{1}{f}\Big)+N_{2}\Big(r,\frac{1}{f^{3}f'}\Big)+o( T_{2}(r))+S(r,f),
\end{split}
\end{equation}
where
\begin{equation*}
\begin{split}
T_{2}(r)=\frac{1}{2\pi}\int_{0}^{2\pi}u_{2}(r{\rm e}^{i\theta})d\theta-u_{2}(0),\quad
u_{2}(z)=\sup\limits_{1\leq j\leq 2}\{\log|p_{j}{\rm e}^{\alpha_{j}z}|,\log|q(z){\rm e}^{az+b}f(z+c)|\}.
\end{split}
\end{equation*}
It follows from \eqref{eq5.4} that
\begin{equation}\label{eq5.5}
T_{2}(r)=O(T(r,f))~(r\to\infty,~r\not\in E).
\end{equation}
Let $z_{0}$ be a zero of $f(z)$ of multiplicity $l\geq 2$. Then the multiplicity of $z_{0}$
of $2n(r,\frac{1}{f})-n_{2}(r,\frac{1}{f^{3}f'})$ is $2l-2(\geq l)$. It follows from
\eqref{eq5.4} and \eqref{eq5.5} that
\begin{equation}\label{eq5.6}
N_{(2}\left(r,\frac{1}{f}\right)=S(r,f),
\end{equation}
where $N_{(2}(r,\frac{1}{f})$ denotes the counting function of multiple zeros of $f(z)$.
Differentiating \eqref{eq5.1} and eliminating ${\rm e}^{\alpha_{1}z}$ yields
\begin{equation}\label{eq5.7}
\begin{split}
& 3f^{2}(f')^{2}+f^{3}f''-\alpha_{1}f^{3}f'+{\rm e}^{az+b}\{(a-\alpha_{1})qf(z+c)+qf'(z+c)+q'f(z+c)\}\\
& =(\alpha_{2}-\alpha_{1})p_{2}{\rm e}^{\alpha_{2}z}.
\end{split}
\end{equation}
Differentiating \eqref{eq5.7} and eliminating ${\rm e}^{\alpha_{2}z}$ yields
\begin{equation}\label{eq5.8}
\begin{split}
& f[6(f')^{3}+9ff'f''+\alpha_{1}\alpha_{2}f^{2}f'-(\alpha_{1}+\alpha_{2})f^{2}f''+f^{2}f'''
-3(\alpha_{1}+\alpha_{2})f(f')^{2}]\\
& =-{\rm e}^{az+b}[B_{0}f(z+c)+B_{1}f'(z+c)+B_{2}f''(z+c)],
\end{split}
\end{equation}
where $B_{0}=(a-\alpha_{1})(a-\alpha_{2})q(z)+(2a-\alpha_{1}-\alpha_{2})q'(z)+q''(z)$,
$B_{1}=(2a-\alpha_{1}-\alpha_{2})q(z)+2q'(z)$, $B_{2}=q(z)$.
Next we consider two subcases.\par
{\bf Subcase 2.1} If $6(f')^{3}+9ff'f''+f^{2}f'''
-(\alpha_{1}+\alpha_{2})f^{2}f''-3(\alpha_{1}+\alpha_{2})f(f')^{2}+\alpha_{1}\alpha_{2}f^{2}f'\equiv 0$,
then we get $(f^{3}f')''-(\alpha_{1}+\alpha_{2})(f^{3}f')'+\alpha_{1}\alpha_{2}f^{3}f'=0$,
which implies that $f^{3}f'=d_{3}{\rm e}^{\alpha_{1}z}+d_{4}{\rm e}^{\alpha_{2}z}$.
By Lemma \ref{le2.2}, we get
\begin{equation}\label{eq5.9}
f^{3}f'=d_{3}{\rm e}^{\alpha_{1}z}\quad \text{or} \quad f^{3}f'=d_{4}{\rm e}^{\alpha_{2}z},
\end{equation}
where $d_{3},~d_{4}$ are constants. Substituting \eqref{eq5.9} into \eqref{eq5.1} yields
\begin{equation*}
-q(z){\rm e}^{az+b}f(z+c)=(d_{3}-p_{1}){\rm e}^{\alpha_{1}z}-p_{2}{\rm e}^{\alpha_{2}z} \quad
\text{or} \quad
-q(z){\rm e}^{az+b}f(z+c)=(d_{4}-p_{2}){\rm e}^{\alpha_{2}z}-p_{1}{\rm e}^{\alpha_{1}z},
\end{equation*}
which contradicts the linear independence of $p_{1}{\rm e}^{\alpha_{1}z}$, $p_{2}{\rm e}^{\alpha_{2}z}$
and $-q(z){\rm e}^{az+b}f(z+c)$.\par

{\bf Subcase 2.2}
If $6(f')^{3}+9ff'f''+f^{2}f'''
-(\alpha_{1}+\alpha_{2})f^{2}f''-3(\alpha_{1}+\alpha_{2})f(f')^{2}+\alpha_{1}\alpha_{2}f^{2}f'\not\equiv 0$,
it follows from \eqref{eq5.8} that every zero of $f(z)$ must be a zero of
$B_{0}f(z+c)+B_{1}f'(z+c)+B_{2}f''(z+c)$. Set
\begin{equation}\label{eq5.10}
\tau_{1}=\frac{B_{0}f(z+c)+B_{1}f'(z+c)+B_{2}f''(z+c)}{f(z)}.
\end{equation}
By \eqref{eq5.6} and \eqref{eq5.10}, the lemma on the logarithmic derivatives
and Lemma \ref{le2.4}, we have
\begin{equation}\label{eq5.11}
T(r,\tau_{1})=S(r,f).
\end{equation}
It follows from \eqref{eq5.8} and \eqref{eq5.10} that
\begin{equation}\label{eq5.12}
6(f')^{3}+9ff'f''+f^{2}f'''
-(\alpha_{1}+\alpha_{2})f^{2}f''-3(\alpha_{1}+\alpha_{2})f(f')^{2}+\alpha_{1}\alpha_{2}f^{2}f'
=-\tau_{1}{\rm e}^{az+b}.
\end{equation}
By \eqref{eq5.12}, $\tau_{1}$ is an entire function with $\tau_{1}\not\equiv 0$. Differentiating \eqref{eq5.12}
and eliminating ${\rm e}^{az+b}$ yields
\begin{equation}\label{eq5.13}
\begin{split}
& f\{[2\alpha_{1}\alpha_{2}+3A(\alpha_{1}+\alpha_{2})](f')^{2}+9(f'')^{2}-[8(\alpha_{1}+\alpha_{2})+9A]f'f''
+11f'f'''-A\alpha_{1}\alpha_{2}ff'+\\
& [\alpha_{1}\alpha_{2}+A(\alpha_{1}+\alpha_{2})]ff''-(A+\alpha_{1}+\alpha_{2})ff'''+ff^{(4)}\}
=3[(2A+\alpha_{1}+\alpha_{2})f'-9f''](f')^{2},
\end{split}
\end{equation}
where $A=a+\frac{\tau'_{1}}{\tau_{1}}$. We claim that $(2A+\alpha_{1}+\alpha_{2})f'-9f''\not\equiv 0$.
Otherwise, $(2A+\alpha_{1}+\alpha_{2})f'-9f''\equiv 0$, then we have
\begin{equation}\label{eq5.14}
f'(z)=d_{5}(\tau_{1})^{\frac{2}{9}}{\rm e}^{\frac{2a+\alpha_{1}+\alpha_{2}}{9}z},
\end{equation}
where $d_{5}$ is a nonzero constant. It follows from \eqref{eq5.13} that
\begin{equation}\label{eq5.15}
\begin{split}
& [2\alpha_{1}\alpha_{2}+3A(\alpha_{1}+\alpha_{2})](f')^{2}+9(f'')^{2}-[8(\alpha_{1}+\alpha_{2})+9A]f'f''+11f'f'''\\
& =f\{A\alpha_{1}\alpha_{2}f'-[\alpha_{1}\alpha_{2}+A(\alpha_{1}+\alpha_{2})]f''+(A+\alpha_{1}+\alpha_{2})f'''-f^{(4)}\}.
\end{split}
\end{equation}
Combining \eqref{eq5.14} and \eqref{eq5.15}, we obtain
\begin{equation}\label{eq5.16}
f(z)=\eta_{3}(z){\rm e}^{\frac{2a+\alpha_{1}+\alpha_{2}}{9}z},
\end{equation}
where $\eta_{3}(z)$ is an entire function with $T(r,\eta_{3})=S(r,f)$. Substituting \eqref{eq5.16} into \eqref{eq5.1} yields
\begin{equation}\label{eq5.17}
\begin{split}
& \eta^{3}_{3}(z)\Big[\eta'_{3}(z)+\frac{2a+\alpha_{1}+\alpha_{2}}{9}\eta_{3}(z)\Big]{\rm e}^{\frac{4(2a+\alpha_{1}+\alpha_{2})}{9}z}+\\
&  {\rm e}^{b+\frac{2a+\alpha_{1}+\alpha_{2}}{9}c}q(z)\eta_{3}(z+c){\rm e}^{\frac{11a+\alpha_{1}+\alpha_{2}}{9}z}
=p_{1}{\rm e}^{\alpha_{1}z}+p_{2}{\rm e}^{\alpha_{2}z}.
\end{split}
\end{equation}
By \eqref{eq5.17} and Lemma \ref{le2.8}, we get
${\rm e}^{b+\frac{2a+\alpha_{1}+\alpha_{2}}{9}c}q(z)\eta_{3}(z+c){\rm e}^{\frac{11a+\alpha_{1}+\alpha_{2}}{9}z}
=q(z){\rm e}^{az+b}f(z+c)=p_{1}{\rm e}^{\alpha_{1}z}$ (or $p_{2}{\rm e}^{\alpha_{2}z}$), which contradicts the linear
independence of $p_{1}{\rm e}^{\alpha_{1}z}$, $p_{2}{\rm e}^{\alpha_{2}z}$ and $-q(z){\rm e}^{az+b}f(z+c)$.
Then $(2A+\alpha_{1}+\alpha_{2})f'-9f''\not\equiv 0$. Let
\begin{equation}\label{eq5.18}
\zeta=\frac{(2A+\alpha_{1}+\alpha_{2})f'-9f''}{f}.
\end{equation}
It follows from \eqref{eq5.13} that every simple zero of $f(z)$ must be a zero of
$(2A+\alpha_{1}+\alpha_{2})f'-9f''$. By \eqref{eq5.6}, \eqref{eq5.11}, \eqref{eq5.18} and the lemma
on the logarithmic derivatives, we get
\begin{equation}\label{eq5.19}
T(r,\zeta)=S(r,f),\quad \zeta\not\equiv 0.
\end{equation}
Substituting \eqref{eq5.18} into \eqref{eq5.13} yields
\begin{equation}\label{eq5.20}
\begin{split}
& [2\alpha_{1}\alpha_{2}+3A(\alpha_{1}+\alpha_{2})-3\zeta](f')^{2}+9(f'')^{2}
-[8(\alpha_{1}+\alpha_{2})+9A]f'f''+11f'f'''\\
& -A\alpha_{1}\alpha_{2}ff'+[\alpha_{1}\alpha_{2}+A(\alpha_{1}+\alpha_{2})]ff''
-(A+\alpha_{1}+\alpha_{2})ff'''+ff^{(4)}=0.
\end{split}
\end{equation}
It follows from \eqref{eq5.18} that
\begin{equation}\label{eq5.21}
f''(z)=u_{0}(z)f(z)+u_{1}(z)f'(z),
\end{equation}
\begin{equation}\label{eq5.22}
f'''(z)=u_{2}(z)f(z)+u_{3}(z)f'(z),
\end{equation}
\begin{equation}\label{eq5.23}
f^{(4)}(z)=u_{4}(z)f(z)+u_{5}(z)f'(z),
\end{equation}
where $u_{0}(z)=-\frac{\zeta(z)}{9}$, $u_{1}(z)=\frac{2A+\alpha_{1}+\alpha_{2}}{9}$,
$u_{2}(z)=u_{1}(z)u_{0}(z)+u'_{0}(z)$, $u_{3}(z)=u_{1}^{2}(z)+u'_{1}(z)+u_{0}(z)$,
$u_{4}(z)=u_{1}^{2}(z)u_{0}(z)+2u'_{1}(z)u_{0}(z)+u_{1}(z)u'_{0}(z)+u_{0}^{2}(z)+u''_{0}(z)$,
$u_{5}(z)=u_{1}^{3}(z)+3u_{1}(z)u'_{1}(z)+2u_{1}(z)u_{0}(z)+u''_{1}(z)+2u'_{0}(z)$,
$T(r,u_{j})=S(r,f),~j=0,1,2,\ldots,5$.
Substituting \eqref{eq5.21}--\eqref{eq5.23}  into \eqref{eq5.20}, we get
\begin{equation}\label{eq5.24}
D_{3}(f')^{2}+D_{2}ff'+D_{1}f^{2}=0,
\end{equation}
where $D_{1}(z)=9u_{0}^{2}(z)+[\alpha_{1}\alpha_{2}+A(\alpha_{1}+\alpha_{2})]u_{0}(z)-
(A+\alpha_{1}+\alpha_{2})u_{2}(z)+u_{4}(z)$,
$D_{2}(z)=18u_{1}(z)u_{0}(z)-[8(\alpha_{1}+\alpha_{2})+9A]u_{0}(z)+11u_{2}(z)-A\alpha_{1}\alpha_{2}
+[\alpha_{1}\alpha_{2}+A(\alpha_{1}+\alpha_{2})]u_{1}(z)-
(A+\alpha_{1}+\alpha_{2})u_{3}(z)+u_{5}(z)$,
$D_{3}(z)=2\alpha_{1}\alpha_{2}+3A(\alpha_{1}+\alpha_{2})-3\zeta(z)+9u_{1}^{2}(z)-
[8(\alpha_{1}+\alpha_{2})+9A]u_{1}(z)+11u_{3}(z)$, and $T(r,D_{j})=S(r,f)(j=1,2,3)$.
We claim that
\begin{equation*}
D_{3}(z)\equiv 0,\quad D_{2}(z)\equiv 0 \quad\text{and}\quad D_{1}(z)\equiv 0.
\end{equation*}
Otherwise, $D_{3}(z)\not\equiv 0$. It follows from \eqref{eq5.24} that
\begin{equation*}
2N\Big(r,\frac{f'}{f}\Big)=N\Big(r,\Big(\frac{f'}{f}\Big)^{2}\Big)
=N\Big(r,\frac{D_{2}}{D_{3}}\cdot\frac{f'}{f}+\frac{D_{1}}{D_{3}}\Big)
\leq N\Big(r,\frac{D_{2}}{D_{3}}\Big)+N\Big(r,\frac{D_{1}}{D_{3}}\Big)+N\Big(r,\frac{f'}{f}\Big),
\end{equation*}
hence,
\begin{equation*}
\overline{N}(r,f)+\overline{N}\Big(r,\frac{1}{f}\Big)=N\Big(r,\frac{f'}{f}\Big)\leq S(r,f).
\end{equation*}
Combining this with \eqref{eq5.6}, we obtain
\begin{equation*}
N\Big(r,\frac{1}{f}\Big)\leq \overline{N}\Big(r,\frac{1}{f}\Big)+
N_{(2}\Big(r,\frac{1}{f}\Big)=S(r,f).
\end{equation*}
From the above inequality, we obtain
\begin{equation}\label{eq5.25}
N\Big(r,\frac{p_{1}{\rm e}^{\alpha_{1}z}+p_{2}{\rm e}^{\alpha_{2}z}}{f^{4}}\Big)=S(r,f).
\end{equation}
It follows from \eqref{eq5.11} and \eqref{eq5.12} that
\begin{equation*}
m\Big(r,\frac{{\rm e}^{az+b}}{f^{3}}\Big)=S(r,f).
\end{equation*}
Then by \eqref{eq5.1}, one can obtain
\begin{equation}\label{eq5.26}
m\Big(r,\frac{p_{1}{\rm e}^{\alpha_{1}z}+p_{2}{\rm e}^{\alpha_{2}z}}{f^{4}}\Big)=S(r,f).
\end{equation}
It follows from \eqref{eq5.25} and \eqref{eq5.26} that
\begin{equation}\label{eq5.27}
T\Big(r,\frac{p_{1}{\rm e}^{\alpha_{1}z}+p_{2}{\rm e}^{\alpha_{2}z}}{f^{4}}\Big)=S(r,f).
\end{equation}
Denote $\omega(z):=\frac{f^{4}(z)}{p_{1}{\rm e}^{\alpha_{1}z}+p_{2}{\rm e}^{\alpha_{2}z}}$,
we obtain $f^{4}(z)=p_{1}\omega(z){\rm e}^{\alpha_{1}z}+p_{2}\omega(z){\rm e}^{\alpha_{2}z}$,
$T(r,p_{j}\omega)=S(r,f)(j=1,2)$. This contradicts Lemma \ref{le2.1}. So $D_{3}(z)\equiv 0$.
By an analogous argument, $D_{2}(z)\equiv 0$.
It then follows from \eqref{eq5.24} that $D_{1}(z)\equiv 0$. Set
\begin{equation}\label{eq5.28}
\lambda:=\alpha_{1}+\alpha_{2}.
\end{equation}
Substituting \eqref{eq5.28} and the expressions for $u_{j}(z)(j=0,1,2,\ldots,5)$ into
$D_{3}(z)\equiv 0$ and $D_{2}(z)\equiv 0$, we obtain the following, respectively:
\begin{equation}\label{eq5.29}
u_{1}(z)=\frac{2A+\lambda}{9},\quad
u_{0}(z)=\frac{1}{38}\Big\{\frac{82A^{2}-98\lambda A+52\lambda^{2}-198A'}{81}
-2\alpha_{1}\alpha_{2}\Big\},
\end{equation}
\begin{equation}\label{eq5.30}
560A^{3}-420\lambda A^{2}-(840\lambda^{2}-2835\alpha_{1}\alpha_{2})A-3780AA'+945\lambda A'
+2835A''+484\lambda^{3}-1863\lambda\alpha_{1}\alpha_{2}\equiv 0.
\end{equation}
Note that $A=a+\frac{\tau'_{1}}{\tau_{1}}$, where $\tau_{1}$ is an entire function with $\tau_{1}\not\equiv 0$.
Suppose that $z_{0}$ is a zero of $\tau_{1}(z)$ of multiplicity $m\geq 1$.
Then in the punctured neighborhood $U^{o}(z_{0})$ of $z_{0}$, we get
\begin{equation}\label{eq5.31}
A(z)=\frac{m}{z-z_{0}}+\tau_{2}(z),
\end{equation}
where $\tau_{2}(z)$ is analytic in $U^{o}(z_{0})$. Substituting \eqref{eq5.31} into
\eqref{eq5.30} and comparing the coefficients of $\frac{m}{(z-z_{0})^{3}}$ gives that
$8m^{3}+54m^{2}+81m=0$. Then $m=-\frac{9}{2}$ or $m=-\frac{9}{4}$. This contradicts
the fact that $m$ is an integer. Thus, $\tau_{1}(z)$ is an
entire function without zeros. Since $\sigma(f)=1$, it follows from \eqref{eq5.11}
that $\tau_{1}(z)$ is a constant. From \eqref{eq5.30} and \eqref{eq5.21},
we obtain, respectively,
\begin{equation}\label{eq5.32}
81(23\lambda-35a)\alpha_{1}\alpha_{2}=4(140a^{3}-105\lambda a^{2}-210\lambda^{2}a+121\lambda^{3}),
\end{equation}
\begin{equation}\label{eq5.33}
f''(z)-\frac{2a+\lambda}{9}f'(z)-
\frac{1}{38}\Big(\frac{82a^{2}-98\lambda a+52\lambda^{2}}{81}-2\alpha_{1}\alpha_{2}\Big)f(z)=0.
\end{equation}
It follows from \eqref{eq5.32} that $23\lambda-35a\neq 0$. We now distinguish three cases.\par
(I) If $\lambda=0$, then \eqref{eq5.28} yields $\alpha_{1}+\alpha_{2}=0$, and it follows from
\eqref{eq5.32} and \eqref{eq5.33} that $a^{2}=\frac{81}{16}\alpha_{1}^{2}$ and
\begin{equation*}
f''(z)-\frac{2a}{9}f'(z)-\frac{a^{2}}{27}f(z)=0.
\end{equation*}
Solving the above equation, we obtain
\begin{equation}\label{eq5.34}
f(z)=d_{6}{\rm e}^{-\frac{\alpha_{1}}{4}z}+d_{7}{\rm e}^{\frac{3\alpha_{1}}{4}z},\quad
\Big(a=\frac{9\alpha_{1}}{4}\Big),
\end{equation}
or
\begin{equation}\label{eq5.35}
f(z)=d_{6}{\rm e}^{\frac{\alpha_{1}}{4}z}+d_{7}{\rm e}^{-\frac{3\alpha_{1}}{4}z},\quad
\Big(a=-\frac{9\alpha_{1}}{4}\Big),
\end{equation}
where $d_{6},~d_{7}$ are constants. Substituting \eqref{eq5.34} into \eqref{eq5.1} gives that
\begin{equation}\label{eq5.36}
\begin{split}
& \Big(\frac{3\alpha_{1}}{4}d_{7}^{4}+d_{7}q(z){\rm e}^{b+\frac{3\alpha_{1}}{4}c}\Big){\rm e}^{3\alpha_{1}z}
+\Big(2\alpha_{1}d_{6}d_{7}^{3}+d_{6}q(z){\rm e}^{b-\frac{\alpha_{1}}{4}c}\Big){\rm e}^{2\alpha_{1}z}\\
& +\frac{3\alpha_{1}}{2}d_{6}^{2}d_{7}^{2}{\rm e}^{\alpha_{1}z}-\frac{\alpha_{1}}{4}d_{6}^{4}{\rm e}^{-\alpha_{1}z}
=p_{1}{\rm e}^{\alpha_{1}z}+p_{2}{\rm e}^{-\alpha_{1}z}.
\end{split}
\end{equation}
By \eqref{eq5.36} and Lemma \ref{le2.8}, we get $d_{6}d_{7}\neq 0$,
$\frac{3\alpha_{1}}{4}d_{7}^{4}+d_{7}q(z){\rm e}^{b+\frac{3\alpha_{1}}{4}c}=0$,
$2\alpha_{1}d_{6}d_{7}^{3}+d_{6}q(z){\rm e}^{b-\frac{\alpha_{1}}{4}c}=0$,
$\frac{3\alpha_{1}}{2}d_{6}^{2}d_{7}^{2}=p_{1}$, $-\frac{\alpha_{1}}{4}d_{6}^{4}=p_{2}$.
Then $q(z)$ reduces to a constant $q$, which satisfies ${\rm e}^{\alpha_{1}c}=\frac{3}{8}$,
$-2\alpha_{1}p_{1}^{6}=243q^{4}{\rm e}^{4b}p_{2}^{3}$.
Similarly, substituting \eqref{eq5.35} into \eqref{eq5.1}, we get
${\rm e}^{\alpha_{1}c}=\frac{8}{3}$, $2\alpha_{1}p_{2}^{6}=243q^{4}{\rm e}^{4b}p_{1}^{3}$,
$\frac{\alpha_{1}}{4}d_{6}^{4}=p_{1}$, $-\frac{3\alpha_{1}}{2}d_{6}^{2}d_{7}^{2}=p_{2}$.
Then part $(2)$ of Theorem \ref{th1.3} holds.\par

(II) If $\lambda\neq 0$ and $\lambda=-2a$, then $a=-\frac{\lambda}{2}=-\frac{\alpha_{1}+\alpha_{2}}{2}$.
By \eqref{eq5.32} and \eqref{eq5.33}, we get
\begin{equation*}
f''(z)-\frac{a^{2}}{9}f(z)=0,\quad a=-\frac{\alpha_{1}+\alpha_{2}}{2}.
\end{equation*}
Solving the above equation, we get
\begin{equation}\label{eq5.37}
f(z)=d_{8}{\rm e}^{-\frac{\alpha_{1}+\alpha_{2}}{6}z}+d_{9}{\rm e}^{\frac{\alpha_{1}+\alpha_{2}}{6}z},
\end{equation}
where $d_{8},~d_{9}$ are constants. Substituting \eqref{eq5.37} into \eqref{eq5.1}, we get
\begin{equation}\label{eq5.38}
\begin{split}
& \Big[d_{8}q(z){\rm e}^{b-\frac{\alpha_{1}+\alpha_{2}}{6}c}-\frac{\alpha_{1}+\alpha_{2}}{6}d_{8}^{4}\Big]
{\rm e}^{-\frac{2(\alpha_{1}+\alpha_{2})}{3}z}+
\Big[d_{9}q(z){\rm e}^{b+\frac{\alpha_{1}+\alpha_{2}}{6}c}-\frac{\alpha_{1}+\alpha_{2}}{6}\cdot 2d_{8}^{3}d_{9}\Big]
{\rm e}^{-\frac{\alpha_{1}+\alpha_{2}}{3}z}\\
& +\frac{\alpha_{1}+\alpha_{2}}{6}\cdot2d_{8}d_{9}^{3}{\rm e}^{\frac{\alpha_{1}+\alpha_{2}}{3}z}
+\frac{\alpha_{1}+\alpha_{2}}{6}d_{9}^{4}{\rm e}^{\frac{2(\alpha_{1}+\alpha_{2})}{3}z}
=p_{1}{\rm e}^{\alpha_{1}z}+p_{2}{\rm e}^{\alpha_{2}z}.
\end{split}
\end{equation}
By \eqref{eq5.38} and Lemma \ref{le2.8}, we obtain
\begin{equation}\label{eq5.39}
\begin{split}
& d_{8}q(z){\rm e}^{b-\frac{\alpha_{1}+\alpha_{2}}{6}c}-\frac{\alpha_{1}+\alpha_{2}}{6}d_{8}^{4}=0,\quad
d_{9}q(z){\rm e}^{b+\frac{\alpha_{1}+\alpha_{2}}{6}c}-\frac{\alpha_{1}+\alpha_{2}}{6}\cdot 2d_{8}^{3}d_{9}=0,\\
& \frac{\alpha_{1}+\alpha_{2}}{6}\cdot2d_{8}d_{9}^{3}=p_{1}, \quad \frac{\alpha_{1}+\alpha_{2}}{6}d_{9}^{4}=p_{2},
\quad \frac{\alpha_{1}+\alpha_{2}}{3}=\alpha_{1}, \quad \frac{2(\alpha_{1}+\alpha_{2})}{3}=\alpha_{2},
\end{split}
\end{equation}
or
\begin{equation}\label{eq5.40}
\begin{split}
& d_{8}q(z){\rm e}^{b-\frac{\alpha_{1}+\alpha_{2}}{6}c}-\frac{\alpha_{1}+\alpha_{2}}{6}d_{8}^{4}=0,\quad
d_{9}q(z){\rm e}^{b+\frac{\alpha_{1}+\alpha_{2}}{6}c}-\frac{\alpha_{1}+\alpha_{2}}{6}\cdot 2d_{8}^{3}d_{9}=0,\\
& \frac{\alpha_{1}+\alpha_{2}}{6}\cdot2d_{8}d_{9}^{3}=p_{2},\quad \frac{\alpha_{1}+\alpha_{2}}{6}d_{9}^{4}=p_{1},
\quad \frac{\alpha_{1}+\alpha_{2}}{3}=\alpha_{2}, \quad \frac{2(\alpha_{1}+\alpha_{2})}{3}=\alpha_{1}.
\end{split}
\end{equation}
If \eqref{eq5.39} holds, then $q(z)$ reduces to a constant $q$, ${\rm e}^{\alpha_{1}c}=2$,
$\alpha_{2}=2\alpha_{1}$ and $2^{11}p_{2}^{9}q^{4}{\rm e}^{4b}=\alpha_{1}p_{1}^{12}$.
Furthermore, by \eqref{eq5.37}, we have
$f(z)=d_{8}{\rm e}^{-\frac{\alpha_{1}}{2}z}+d_{9}{\rm e}^{\frac{\alpha_{1}}{2}z}$,
$\alpha_{1}d_{8}d_{9}^{3}=p_{1}$, $\alpha_{1}d_{9}^{4}=2p_{2}$.
Similarly, if \eqref{eq5.40} holds, by \eqref{eq5.37}, we get
$f(z)=d_{8}{\rm e}^{-\frac{\alpha_{2}}{2}z}+d_{9}{\rm e}^{\frac{\alpha_{2}}{2}z}$,
$\alpha_{2}d_{8}d_{9}^{3}=p_{2}$, $\alpha_{2}d_{9}^{4}=2p_{1}$, ${\rm e}^{\alpha_{2}c}=2$,
$\alpha_{1}=2\alpha_{2}$ and $2^{11}p_{1}^{9}q^{4}{\rm e}^{4b}=\alpha_{2}p_{2}^{12}$.
Then part $(3)$ of Theorem \ref{th1.3} holds.\par

(III) If $\lambda\neq 0$ and $\lambda\neq-2a$, by \eqref{eq5.32} and \eqref{eq5.33}, we get
\begin{equation}\label{eq5.41}
f''(z)-\frac{2a+\lambda}{9}f'(z)-\frac{(\lambda-a)(\lambda-7a)(2\lambda-5a)}{27(23\lambda-35a)}f(z)=0.
\end{equation}
In what follows, we consider five subcases.\par
(III-1) If $\lambda=a$, then $a=\alpha_{1}+\alpha_{2}$. Equation \eqref{eq5.41} takes the form
\begin{equation*}
f''(z)-\frac{\alpha_{1}+\alpha_{2}}{3}f'(z)=0.
\end{equation*}
Solving the above equation yields
\begin{equation}\label{eq5.42}
f(z)=d_{10}+d_{11}{\rm e}^{\frac{\alpha_{1}+\alpha_{2}}{3}z},
\end{equation}
where $d_{10},~d_{11}$ are constants. Substituting \eqref{eq5.42} into \eqref{eq5.1} yields
\begin{equation}\label{eq5.43}
\begin{split}
& \Big[\frac{\alpha_{1}+\alpha_{2}}{3}d_{11}^{4}+d_{11}q(z){\rm e}^{b+\frac{\alpha_{1}+\alpha_{2}}{3}c}\Big]
{\rm e}^{\frac{4(\alpha_{1}+\alpha_{2})}{3}z}+
\Big[(\alpha_{1}+\alpha_{2})d_{10}d_{11}^{3}+d_{10}q(z){\rm e}^{b}\Big]{\rm e}^{(\alpha_{1}+\alpha_{2})z}\\
& +(\alpha_{1}+\alpha_{2})d_{10}^{2}d_{11}^{2}{\rm e}^{\frac{2(\alpha_{1}+\alpha_{2})}{3}z}
+\frac{\alpha_{1}+\alpha_{2}}{3}d_{10}^{3}d_{11}{\rm e}^{\frac{\alpha_{1}+\alpha_{2}}{3}z}
=p_{1}{\rm e}^{\alpha_{1}z}+p_{2}{\rm e}^{\alpha_{2}z}.
\end{split}
\end{equation}
By \eqref{eq5.43} and Lemma \ref{le2.8}, we get
\begin{equation}\label{eq5.44}
\begin{split}
& \frac{\alpha_{1}+\alpha_{2}}{3}d_{11}^{4}+d_{11}q(z){\rm e}^{b+\frac{\alpha_{1}+\alpha_{2}}{3}c}=0,\quad
(\alpha_{1}+\alpha_{2})d_{10}d_{11}^{3}+d_{10}q(z){\rm e}^{b}=0,\\
& \frac{\alpha_{1}+\alpha_{2}}{3}d_{10}^{3}d_{11}=p_{1},\quad
(\alpha_{1}+\alpha_{2})d_{10}^{2}d_{11}^{2}=p_{2},\quad
\frac{\alpha_{1}+\alpha_{2}}{3}=\alpha_{1},\quad \frac{2(\alpha_{1}+\alpha_{2})}{3}=\alpha_{2},
\end{split}
\end{equation}
or
\begin{equation}\label{eq5.45}
\begin{split}
& \frac{\alpha_{1}+\alpha_{2}}{3}d_{11}^{4}+d_{11}q(z){\rm e}^{b+\frac{\alpha_{1}+\alpha_{2}}{3}c}=0,\quad
(\alpha_{1}+\alpha_{2})d_{10}d_{11}^{3}+d_{10}q(z){\rm e}^{b}=0,\\
& \frac{\alpha_{1}+\alpha_{2}}{3}d_{10}^{3}d_{11}=p_{2},\quad
(\alpha_{1}+\alpha_{2})d_{10}^{2}d_{11}^{2}=p_{1},\quad
\frac{\alpha_{1}+\alpha_{2}}{3}=\alpha_{2},\quad \frac{2(\alpha_{1}+\alpha_{2})}{3}=\alpha_{1}.
\end{split}
\end{equation}
If \eqref{eq5.44} holds, then $q(z)$ reduces to a constant $q$, ${\rm e}^{\alpha_{1}c}=\frac{1}{3}$,
$\alpha_{2}=2\alpha_{1}$ and $3^{5}p_{1}^{6}q^{4}{\rm e}^{4b}=\alpha_{1}p_{2}^{9}$.
Furthermore, by \eqref{eq5.42}, we have
$f(z)=d_{10}+d_{11}{\rm e}^{\alpha_{1}z}$,
$\alpha_{1}d_{10}^{3}d_{11}=p_{1}$, $3\alpha_{1}d_{10}^{2}d_{11}^{2}=p_{2}$.
Similarly, if \eqref{eq5.45} holds, by \eqref{eq5.42}, we get
$f(z)=d_{10}+d_{11}{\rm e}^{\alpha_{2}z}$,
$\alpha_{2}d_{10}^{3}d_{11}=p_{2}$, $3\alpha_{2}d_{10}^{2}d_{11}^{2}=p_{1}$, ${\rm e}^{\alpha_{2}c}=\frac{1}{3}$,
$\alpha_{1}=2\alpha_{2}$ and $3^{5}p_{2}^{6}q^{4}{\rm e}^{4b}=\alpha_{2}p_{1}^{9}$.
Then part $(4)$ of Theorem \ref{th1.3} holds.\par

(III-2) If $\lambda=7a$, then $a=\frac{\alpha_{1}+\alpha_{2}}{7}$. Equation \eqref{eq5.41} reduces to
\begin{equation*}
f''(z)-\frac{\alpha_{1}+\alpha_{2}}{7}f'(z)=0.
\end{equation*}
Solving the above equation gives that
\begin{equation}\label{eq5.46}
f(z)=d_{12}+d_{13}{\rm e}^{\frac{\alpha_{1}+\alpha_{2}}{7}z},
\end{equation}
where $d_{12},~d_{13}$ are constants. Substituting \eqref{eq5.46} into \eqref{eq5.1} yields
\begin{equation}\label{eq5.47}
\begin{split}
& \Big[\frac{\alpha_{1}+\alpha_{2}}{7}d_{12}^{3}d_{13}+d_{12}q(z){\rm e}^{b}\Big]{\rm e}^{\frac{\alpha_{1}+\alpha_{2}}{7}z}
+\Big[\frac{3(\alpha_{1}+\alpha_{2})}{7}d_{12}^{2}d_{13}^{2}+d_{13}q(z){\rm e}^{b+\frac{\alpha_{1}+\alpha_{2}}{7}c}\Big]
{\rm e}^{\frac{2(\alpha_{1}+\alpha_{2})}{7}z}\\
& +\frac{3(\alpha_{1}+\alpha_{2})}{7}d_{12}d_{13}^{3}{\rm e}^{\frac{3(\alpha_{1}+\alpha_{2})}{7}z}
+\frac{\alpha_{1}+\alpha_{2}}{7}d_{13}^{4}{\rm e}^{\frac{4(\alpha_{1}+\alpha_{2})}{7}z}
=p_{1}{\rm e}^{\alpha_{1}z}+p_{2}{\rm e}^{\alpha_{2}z}.
\end{split}
\end{equation}
By \eqref{eq5.47} and Lemma \ref{le2.8}, we get
\begin{equation}\label{eq5.48}
\begin{split}
& \frac{\alpha_{1}+\alpha_{2}}{7}d_{12}^{3}d_{13}+d_{12}q(z){\rm e}^{b}=0,\quad
\frac{3(\alpha_{1}+\alpha_{2})}{7}d_{12}^{2}d_{13}^{2}+d_{13}q(z){\rm e}^{b+\frac{\alpha_{1}+\alpha_{2}}{7}c}=0,\\
& \frac{3(\alpha_{1}+\alpha_{2})}{7}d_{12}d_{13}^{3}=p_{1},\quad
\frac{\alpha_{1}+\alpha_{2}}{7}d_{13}^{4}=p_{2},\quad
\frac{3(\alpha_{1}+\alpha_{2})}{7}=\alpha_{1},\quad \frac{4(\alpha_{1}+\alpha_{2})}{7}=\alpha_{2},
\end{split}
\end{equation}
or
\begin{equation}\label{eq5.49}
\begin{split}
& \frac{\alpha_{1}+\alpha_{2}}{7}d_{12}^{3}d_{13}+d_{12}q(z){\rm e}^{b}=0,\quad
\frac{3(\alpha_{1}+\alpha_{2})}{7}d_{12}^{2}d_{13}^{2}+d_{13}q(z){\rm e}^{b+\frac{\alpha_{1}+\alpha_{2}}{7}c}=0,\\
& \frac{3(\alpha_{1}+\alpha_{2})}{7}d_{12}d_{13}^{3}=p_{2},\quad
\frac{\alpha_{1}+\alpha_{2}}{7}d_{13}^{4}=p_{1},\quad
\frac{3(\alpha_{1}+\alpha_{2})}{7}=\alpha_{2},\quad \frac{4(\alpha_{1}+\alpha_{2})}{7}=\alpha_{1}.
\end{split}
\end{equation}
If \eqref{eq5.48} holds, then $q(z)$ reduces to a constant $q$, ${\rm e}^{\alpha_{1}c}=27$,
$\alpha_{2}=\frac{4}{3}\alpha_{1}$ and $3^{9}p_{2}^{5}q^{4}{\rm e}^{4b}=\alpha_{1}p_{1}^{8}$.
Furthermore, by \eqref{eq5.46}, we have
$f(z)=d_{12}+d_{13}{\rm e}^{\frac{\alpha_{1}}{3}z}$,
$\alpha_{1}d_{12}d_{13}^{3}=p_{1}$, $\alpha_{1}d_{13}^{4}=3p_{2}$.
Similarly, if \eqref{eq5.49} holds, by \eqref{eq5.46}, we get
$f(z)=d_{12}+d_{13}{\rm e}^{\frac{\alpha_{2}}{3}z}$,
$\alpha_{2}d_{12}d_{13}^{3}=p_{2}$, $\alpha_{2}d_{13}^{4}=3p_{1}$, ${\rm e}^{\alpha_{2}c}=27$,
$\alpha_{1}=\frac{4}{3}\alpha_{2}$ and $3^{9}p_{1}^{5}q^{4}{\rm e}^{4b}=\alpha_{2}p_{2}^{8}$.
Then part $(5)$ of Theorem \ref{th1.3} holds.\par

(III-3) If $\lambda=\frac{5}{2}a$, then $a=\frac{2(\alpha_{1}+\alpha_{2})}{5}$. Equation \eqref{eq5.41} can be rewritten as
\begin{equation*}
f''(z)-\frac{\alpha_{1}+\alpha_{2}}{5}f'(z)=0.
\end{equation*}
Solving the above equation yields
\begin{equation}\label{eq5.50}
f(z)=d_{14}+d_{15}{\rm e}^{\frac{\alpha_{1}+\alpha_{2}}{5}z},
\end{equation}
where $d_{14},~d_{15}$ are constants. Substituting \eqref{eq5.50} into \eqref{eq5.1} yields
{\small
\begin{equation}\label{eq5.51}
\begin{split}
& \Big[\frac{3(\alpha_{1}+\alpha_{2})}{5}d_{14}^{2}d_{15}^{2}+d_{14}q(z){\rm e}^{b}\Big]{\rm e}^{\frac{2(\alpha_{1}+\alpha_{2})}{5}z}
+\Big[\frac{3(\alpha_{1}+\alpha_{2})}{5}d_{14}d_{15}^{3}+d_{15}q(z){\rm e}^{b+\frac{\alpha_{1}+\alpha_{2}}{5}c}\Big]
{\rm e}^{\frac{3(\alpha_{1}+\alpha_{2})}{5}z}\\
& +\frac{\alpha_{1}+\alpha_{2}}{5}d_{14}^{3}d_{15}{\rm e}^{\frac{\alpha_{1}+\alpha_{2}}{5}z}
+\frac{\alpha_{1}+\alpha_{2}}{5}d_{15}^{4}{\rm e}^{\frac{4(\alpha_{1}+\alpha_{2})}{5}z}
=p_{1}{\rm e}^{\alpha_{1}z}+p_{2}{\rm e}^{\alpha_{2}z}.
\end{split}
\end{equation}
}
By \eqref{eq5.47} and Lemma \ref{le2.8}, we get
\begin{equation}\label{eq5.52}
\begin{split}
& \frac{3(\alpha_{1}+\alpha_{2})}{5}d_{14}^{2}d_{15}^{2}+d_{14}q(z){\rm e}^{b}=0,\quad
\frac{3(\alpha_{1}+\alpha_{2})}{5}d_{14}d_{15}^{3}+d_{15}q(z){\rm e}^{b+\frac{\alpha_{1}+\alpha_{2}}{5}c}=0,\\
& \frac{\alpha_{1}+\alpha_{2}}{5}d_{14}^{3}d_{15}=p_{1},\quad
\frac{\alpha_{1}+\alpha_{2}}{5}d_{15}^{4}=p_{2},\quad
\frac{\alpha_{1}+\alpha_{2}}{5}=\alpha_{1},\quad \frac{4(\alpha_{1}+\alpha_{2})}{5}=\alpha_{2},
\end{split}
\end{equation}
or
\begin{equation}\label{eq5.53}
\begin{split}
& \frac{3(\alpha_{1}+\alpha_{2})}{5}d_{14}^{2}d_{15}^{2}+d_{14}q(z){\rm e}^{b}=0,\quad
\frac{3(\alpha_{1}+\alpha_{2})}{5}d_{14}d_{15}^{3}+d_{15}q(z){\rm e}^{b+\frac{\alpha_{1}+\alpha_{2}}{5}c}=0,\\
& \frac{\alpha_{1}+\alpha_{2}}{5}d_{14}^{3}d_{15}=p_{2},\quad
\frac{\alpha_{1}+\alpha_{2}}{5}d_{15}^{4}=p_{1},\quad
\frac{\alpha_{1}+\alpha_{2}}{5}=\alpha_{2},\quad \frac{4(\alpha_{1}+\alpha_{2})}{5}=\alpha_{1}.
\end{split}
\end{equation}
If \eqref{eq5.52} holds, then $q(z)$ reduces to a constant $q$, ${\rm e}^{\alpha_{1}c}=1$,
$\alpha_{2}=4\alpha_{1}$ and $3^{12}\alpha_{1}^{3}p_{1}^{4}p_{2}^{5}=q^{12}{\rm e}^{12b}$.
Furthermore, by \eqref{eq5.50}, we have
$f(z)=d_{14}+d_{15}{\rm e}^{\alpha_{1}z}$,
$\alpha_{1}d_{14}^{3}d_{15}=p_{1}$, $\alpha_{1}d_{15}^{4}=p_{2}$.
Similarly, if \eqref{eq5.53} holds, by \eqref{eq5.50}, we get
$f(z)=d_{14}+d_{15}{\rm e}^{\alpha_{2}z}$,
$\alpha_{2}d_{14}^{3}d_{15}=p_{2}$, $\alpha_{2}d_{15}^{4}=p_{1}$, ${\rm e}^{\alpha_{2}c}=1$,
$\alpha_{1}=4\alpha_{2}$ and $3^{12}\alpha_{2}^{3}p_{2}^{4}p_{1}^{5}=q^{12}{\rm e}^{12b}$.
Then part $(6)$ of Theorem \ref{th1.3} holds.\par

(III-4) If $\lambda \not\in\{a,~\frac{5}{2}a,~7a\}$, and Eq.\eqref{eq5.41} admits two distinct roots,
then the characteristic equation corresponding to Eq.\eqref{eq5.41} reads as follows
\begin{equation}\label{eq5.54}
\gamma^{2}-\frac{2a+\lambda}{9}\gamma-\frac{(\lambda-a)(\lambda-7a)(2\lambda-5a)}{27(23\lambda-35a)}=0.
\end{equation}
Suppose that $\gamma_{1}$ and $\gamma_{2}$ are two distinct roots of Eq.\eqref{eq5.54}. It then follows that
\begin{align}\label{eq5.55}
\left\{\begin{array}{l}
\gamma_{1}+\gamma_{2}=\frac{2a+\lambda}{9},\\
\gamma_{1}\cdot\gamma_{2}=-\frac{(\lambda-a)(\lambda-7a)(2\lambda-5a)}{27(23\lambda-35a)}.
\end{array}\right.
\end{align}
And the solution to Eq.\eqref{eq5.54} reads $f(z)=d_{16}{\rm e}^{\gamma_{1}z}+d_{17}{\rm e}^{\gamma_{2}z}$.
Substituting this expression into \eqref{eq5.1} yields
\begin{equation}\label{eq5.56}
\begin{split}
& \gamma_{1}d_{16}^{4}{\rm e}^{4\gamma_{1}z}+(3\gamma_{1}+\gamma_{2})d_{16}^{3}d_{17}{\rm e}^{(3\gamma_{1}+\gamma_{2})z}
+3(\gamma_{1}+\gamma_{2})d_{16}^{2}d_{17}^{2}{\rm e}^{2(\gamma_{1}+\gamma_{2})z}\\
& +(\gamma_{1}+3\gamma_{2})d_{16}d_{17}^{3}{\rm e}^{(\gamma_{1}+3\gamma_{2})z}
+\gamma_{2}d_{17}^{4}{\rm e}^{4\gamma_{2}z}+d_{16}{\rm e}^{b+\gamma_{1}c}q(z){\rm e}^{(a+\gamma_{1})z}\\
& +d_{17}{\rm e}^{b+\gamma_{2}c}q(z){\rm e}^{(a+\gamma_{2})z}
=p_{1}{\rm e}^{\alpha_{1}z}+p_{2}{\rm e}^{\alpha_{2}z}.
\end{split}
\end{equation}
By \eqref{eq5.56} and Lemma \ref{le2.8}, we get $3\gamma_{1}+\gamma_{2}=0$ or $\gamma_{1}+\gamma_{2}=0$ or
$\gamma_{1}+3\gamma_{2}=0$.
Next, we consider three subcases.\par
(III-4-1) If $3\gamma_{1}+\gamma_{2}=0$, then $\gamma_{2}=-3\gamma_{1}$, $a+\gamma_{2}=a-3\gamma_{1}$,
$2(\gamma_{1}+\gamma_{2})=-4\gamma_{1}$, $\gamma_{1}+3\gamma_{2}=-8\gamma_{1}$, $4\gamma_{2}=-12\gamma_{1}$.
We thus conclude that $a+\gamma_{1}$ and $a+\gamma_{2}=a-3\gamma_{1}$ must coincide with two elements from
$\{4\gamma_{1},-4\gamma_{1},-8\gamma_{1},-12\gamma_{1}\}$. Then $a+\gamma_{1}=-4\gamma_{1}$ and
$a-3\gamma_{1}=-8\gamma_{1}$ or $a+\gamma_{1}=-8\gamma_{1}$ and $a-3\gamma_{1}=-12\gamma_{1}$. If
$a+\gamma_{1}=-4\gamma_{1}$ and $a-3\gamma_{1}=-8\gamma_{1}$, then $a=-5\gamma_{1}$.
By \eqref{eq5.56} and Lemma \ref{le2.8}, we get
\begin{equation}\label{eq5.57}
\begin{split}
& 3(\gamma_{1}+\gamma_{2})d_{16}^{2}d_{17}^{2}+d_{16}{\rm e}^{b+\gamma_{1}c}q(z)=0,\quad
(\gamma_{1}+3\gamma_{2})d_{16}d_{17}^{3}+d_{17}{\rm e}^{b+\gamma_{2}c}q(z)=0,\\
& \gamma_{1}d_{16}^{4}=p_{1},\quad \gamma_{2}d_{17}^{4}=p_{2},\quad \alpha_{1}=4\gamma_{1},
\quad \alpha_{2}=-12\gamma_{1}, \quad \lambda=\alpha_{1}+\alpha_{2}=-8\gamma_{1},
\end{split}
\end{equation}
or
\begin{equation}\label{eq5.58}
\begin{split}
& 3(\gamma_{1}+\gamma_{2})d_{16}^{2}d_{17}^{2}+d_{16}{\rm e}^{b+\gamma_{1}c}q(z)=0,\quad
(\gamma_{1}+3\gamma_{2})d_{16}d_{17}^{3}+d_{17}{\rm e}^{b+\gamma_{2}c}q(z)=0,\\
& \gamma_{1}d_{16}^{4}=p_{2},\quad \gamma_{2}d_{17}^{4}=p_{1},\quad \alpha_{2}=4\gamma_{1},
\quad \alpha_{1}=-12\gamma_{1}, \quad \lambda=\alpha_{1}+\alpha_{2}=-8\gamma_{1}.
\end{split}
\end{equation}
If \eqref{eq5.57} holds, then $q(z)$ reduces to a constant $q$, $\gamma_{1}=\frac{\alpha_{1}}{4}$,
$\gamma_{2}=-3\gamma_{1}=-\frac{3\alpha_{1}}{4}$,
${\rm e}^{\alpha_{1}c}=\frac{3}{4}$,
$\alpha_{2}=-3\alpha_{1}$ and $48\alpha_{1}p_{1}p_{2}^{2}=q^{4}{\rm e}^{4b}$.
Furthermore, $f(z)=d_{16}{\rm e}^{\frac{\alpha_{1}}{4}z}+d_{17}{\rm e}^{-\frac{3\alpha_{1}}{4}z}$,
$\alpha_{1}d_{16}^{4}=4p_{1}$, $-3\alpha_{1}d_{17}^{4}=4p_{2}$,
$a=\frac{5}{8}(\alpha_{1}+\alpha_{2})$.
Similarly, if \eqref{eq5.58} holds, then we get
$f(z)=d_{16}{\rm e}^{\frac{\alpha_{2}}{4}z}+d_{17}{\rm e}^{-\frac{3\alpha_{2}}{4}z}$,
$\alpha_{1}=-3\alpha_{2}$, ${\rm e}^{\alpha_{2}c}=\frac{3}{4}$,
$\alpha_{2}d_{16}^{4}=4p_{2}$, $-3\alpha_{2}d_{17}^{4}=4p_{1}$,
$a=\frac{5}{8}(\alpha_{1}+\alpha_{2})$, and $48\alpha_{2}p_{2}p_{1}^{2}=q^{4}{\rm e}^{4b}$.
Then part $(7)$ of Theorem \ref{th1.3} holds.\par
If $a+\gamma_{1}=-8\gamma_{1}$ and $a+\gamma_{2}=a-3\gamma_{1}=-12\gamma_{1}$, then by Eq.\eqref{eq5.56}, 
we obtain $(\alpha_{1},\alpha_{2})\in \{(4\gamma_{1},-4\gamma_{1}),(-4\gamma_{1},4\gamma_{1})\}$;
that is, $\alpha_{1}+\alpha_{2}=0$, which implies that $\lambda=0$, a contradiction.\par
(III-4-2)  If $3\gamma_{2}+\gamma_{1}=0$,
part $(7)$ of Theorem \ref{th1.3} follows by the same argument.\par
(III-4-3) If $3(\gamma_{1}+\gamma_{2})=0$, then $\gamma_{1}+\gamma_{2}=0$. It follows from
\eqref{eq5.55} that $\lambda+2a=0$. This contradicts $\lambda\neq -2a$.\par
(III-5)  If $\lambda \not\in\{a,~\frac{5}{2}a,~7a\}$, and Eq.\eqref{eq5.41} possesses a repeated
root (of multiplicity two), and it associated characteristic equation is given by Eq.\eqref{eq5.54}.
Let $\beta(\neq 0)$ denote a double root of Eq.\eqref{eq5.54}. Then
\begin{align}\label{eq5.59}
\left\{\begin{array}{l}
2\beta=\frac{2a+\lambda}{9},\\
\beta^{2}=-\frac{(\lambda-a)(\lambda-7a)(2\lambda-5a)}{27(23\lambda-35a)}.
\end{array}\right.
\end{align}
And the solution to Eq.\eqref{eq5.54} reads $f(z)=(d_{18}+d_{19}z){\rm e}^{\beta z}$.
Substituting this expression into \eqref{eq5.1} gives that
{\small
\begin{align}\label{eq5.60}
(d_{18}+d_{19}z)^{3}[\beta d_{18}+(1+\beta z)d_{19}]{\rm e}^{4\beta z}+
{\rm e}^{b+\beta c}q(z)[d_{18}+d_{19}(z+c)]{\rm e}^{(a+\beta)z}
=p_{1}{\rm e}^{\alpha_{1}z}+p_{2}{\rm e}^{\alpha_{2}z}.
\end{align}
}
By \eqref{eq5.60} and Lemma \ref{le2.8}, we get
$(\alpha_{1},\alpha_{2})\in\{(4\beta,a+\beta),(a+\beta,4\beta)\}$, which implies that
$\alpha_{1}+\alpha_{2}=a+5\beta$. It follows from \eqref{eq5.28} that
$\beta=\frac{\lambda-a}{5}$. Combining this with \eqref{eq5.59}, we get $\beta=0$,
a contradiction. This completes the proof.\par

\end{proof}


\scriptsize
\begin{center}

\end{center}

\noindent Min Feng Chen\\
School of Mathematics and Statistics\\
Guangdong University of Foreign Studies\\
Guangzhou, 510006\\
P. R. China\\
Email: minfengchen2018@gdufs.edu.cn\\

\noindent Zong Xuan Chen\\
School of Mathematical Sciences\\
South China Normal University\\
Guangzhou, 510631\\
P. R. China\\
E-mail: chzx@vip.sina.com\\

\end{document}